\author{Marc Troyanov}
\date{January 7, 2026}
\title{The Choreography of Geodesics in SOL}
\numberwithin{equation}{section}
\pgfplotsset{compat=newest}  
\newcounter{nextnum}[section] 
\theoremstyle{plain}
\newtheorem{theorem}{\rm\bf Theorem}[section]
\newtheorem{proposition}[theorem]{\rm\bf Proposition}
\newtheorem{lemma}[theorem]{\rm\bf Lemma}
\newtheorem{corollary}[theorem]{\rm\bf Corollary}
\newtheorem{remark}[theorem]{\rm\bf Remark}
\newtheorem{definition}[theorem]{\rm\bf Definition}
\newcommand{\R}{\mathbb{R}}
\newcommand{\ee}{\mathrm{e}}
\DeclareMathOperator{\arccosh}{arccosh}
\DeclareMathOperator{\arctanh}{arctanh}
\DeclareMathOperator{\sgn}{sgn}
\DeclareMathOperator{\SOL}{SOL}
\DeclareMathOperator{\NIL}{NIL}
\DeclareMathOperator{\cut}{cut}
\DeclareMathOperator{\Cut}{Cut}
\begin{document}
 
\maketitle

\begin{abstract}
We provide a self-contained geometric description of the geodesic flow in the three-dimensional Lie group $\SOL$, one of Thurston's eight model geometries. The geometry of geodesics is governed by a single invariant $k\in [0,1]$, its \textit{modulus}. Generic geodesics spiral around an axis, with  well-defined amplitude $A(k)$, period $T(k)$, and horizontal drift $H(k)$. We characterize minimal geodesic segments and the cut locus, and obtain an asymptotic estimate showing that the distances between points at the same altitude grow logarithmically. This work builds on previous work by Grayson and Coiculescu-Schwartz, but develops an alternative geometric and dynamical viewpoint.

\medskip 

\noindent\textbf{MSC2020.} \, 53C22, 53C20, 37J35, 70H03.

\smallskip 

\noindent\textbf{Keywords.} Sol geometry, geodesics flow, cut locus,  integrable systems, first integral.
\end{abstract}

\medskip 

\setcounter{tocdepth}{2}
{\small  \tableofcontents  }

\newpage 

\section{Introduction} 
In the late 1970s, William P. Thurston formulated his Geometrization Conjecture (now a theorem, thanks to Perelman’s proof), asserting that any closed, connected $3$-manifold decomposes along embedded $2$-spheres and incompressible $2$-tori into elementary pieces, each carrying a complete Riemannian metric locally isometric to one of  eight model geometries.\footnote{See Scott~\cite{Scott} for a classic account of the eight geometries, and Bonahon~\cite{Bonahon} for a  survey of geometric structures on $3$-manifolds centered on Thurston’s program.}
Among these, the geometry of $\SOL$ governs torus-bundles over the cirle  with Anosov monodromy.
It was in the highly creative atmosphere that prevailed at Princeton in the early 1980's that Matthew A. Grayson wrote his thesis ~\cite{Grayson} under Thurston's supervision. Chapter 3 of the thesis is devoted to the geodesic flow of $\SOL$ and closed $\SOL$-manifolds, and describes the possible lengths of closed geodesics representing free homotopy classes in toric fiber bundles.

\smallskip 

Reflecting on Thurston's generous mentorship and the delightfully informal yet creative atmosphere 
he inspired, Grayson writes in the acknowledgments of his thesis:
\begin{center}
\begin{minipage}{0.93\textwidth}
\begin{quote}\small\ttfamily
``Bill turns out another graduate student. I don't see why he does it.
We bother him, ask him dumb questions, follow him to Boulder, ask him for letters of
recommendation, and mostly fail to understand what he's trying to say.''
\end{quote}
\end{minipage}
\end{center}

Since Grayson’s thesis, several works have sharpened our understanding of $\SOL$.
In particular, M. P. Coiculescu and R. E. Schwartz analyze in ~\cite{CoiculescuSchwartz}   the cut locus and the topology of metric spheres in $\SOL$, and an earlier paper by the present author~\cite{Troyanov98} describes the asymptotic behavior of geodesics and the horizon of $\SOL$.

\medskip

  $\SOL$ is defined as the semidirect product $\R^{2}\rtimes\R$ determined by the anisotropic scaling action of $\R$ on $\R^{2}$, given explicitly by  
\[
  (x,y)\mapsto (\ee^{z}x,\; \ee^{-z}y).
\]
It is a solvable Lie group, and as a manifold it is diffeomorphic to $\R^{3}$. We endow $\SOL$ with the following standard left-invariant Riemannian metric
\[
  ds^2 = \ee^{-2z}\,dx^{2} \;+\; \ee^{2z}\,dy^{2} \;+\; dz^{2}.
\]
 
The geometry of $\SOL$ displays a rich internal structure. Its isometry group reflects the underlying anisotropy, and the manifold admits several natural foliations. In particular, there are vertical foliations by totally geodesic hyperbolic planes, and a horizontal foliation by Euclidean planes, which however are neither totally geodesic nor isometrically embedded. 

  Since $\SOL$ is a Lie group with a left–invariant metric, its geodesic flow is a Hamiltonian system with abundant symmetries. By Noether’s theorem, these symmetries yield conserved quantities, making the system \emph{completely integrable}. The geodesic flow can then be solved by the classical \emph{method of quadratures}, that is, by a finite sequence of explicit integrations combined with elementary algebraic manipulations\footnote{For background on completely integrable Hamiltonian systems see \cite{Arnold1,BolsinovFomenko,CushmanBates,Torrielli}; for Noether’s theorem see \cite{KosmannNoether}.}.  
 
  For a given geodesic $\gamma(t)=(x(t),y(t),z(t))$ in $\SOL$, the quantities
\[
  a =  \ee^{2z}\dot{x}, \qquad b = \ee^{-2z}\dot{y}
\]
remain constant; we call them the \emph{principal constants of motion}. The vertical coordinate then satisfies the autonomous ODE
\[
  \ddot{z} + U'_{a,b}(z) = 0, \qquad 
  U_{a,b}(z) = \tfrac12\bigl(a^{2}\ee^{2z} + b^{2}\ee^{-2z}\bigr).
\]
Solving the geodesic equations thus reduces to integrating this one–dimensional differential equation for $z(t)$ and recovering the horizontal coordinates by quadrature, that is, by direct integration:\footnote{About the history of the method of quadratures applied to geodesics, see Nabonnand~\cite{Nabonnand1995}.}
\[
  x(t) = \int a\,\ee^{-2z(t)}\,dt, \qquad
  y(t) = \int b\,\ee^{2z(t)}\,dt.
\]
Exact solutions can be expressed in terms of Jacobi elliptic functions as in  \cite{Troyanov98}, however in the present paper, we shall not need these explicit formulas.
 A direct computation shows  that the quantity
\[
  c = a\,x(t) - b\,y(t) + \dot{z}(t)
\]
is also conserved, i.e. $\dot c = 0$,  providing a third important constants of motion for the geodesic flow. 
 
A fundamental geometric invariant for the classification of geodesics is the \emph{modulus} $k\in [0,1]$, defined for a unit–speed geodesic $\gamma$ by
\[
  k = \sqrt{\frac{1 - 2\lvert ab\rvert}{1 + 2\lvert ab\rvert}},
\]
where $a$ and $b$ are the principal constants of motion of $\gamma$. In Proposition~\ref{kisinvariant2} we establish that  
\emph{two generic geodesics in $\SOL$ are geometrically equivalent if and only if they have the same modulus $k$.}
We call a geodesic \emph{special} when $k\in\{0,1\}$, and \emph{generic} when $0<k<1$.  Special geodesics are easily described: they are either horizontal lines lying in some horizontal plane and bisecting the coordinate axes, or they lie entirely in vertical, totally geodesic hyperbolic planes.  
We now turn to the case of generic geodesics.

The vertical coordinate $z(t)$ of a generic geodesic oscillates periodically with {amplitude} $A$ determined by the condition $2U_{a,b}(A)=1$; in terms of the modulus, we have 
$A(k) = \arctanh(k)$. 
As shown in Section~\ref{sec.TMHelliptic}, the oscillation period is $T(k) = \sqrt{8\,(1 + k^{2})}\,K(k)$, where $K(k)$ denotes the complete elliptic integral of the first kind

Following Grayson, to each generic geodesic $\gamma$ with constants of motion $a,b,c$ we associate its \emph{Grayson cylinder}, defined by
\[
  \mathcal G_{a,b,c} = \bigl\{ (x,y,z) \in \SOL \;\bigm|\; (a x - b y - c)^{2} + 2 U_{a,b}(z) = 1 \bigr\}.
\]
This surface is diffeomorphic to a cylinder $S^{1}\times\R$, and $\gamma$ is entirely contained in it. Each Grayson cylinder is foliated by geodesics with the same modulus $k$, and it \emph{guides} the motion: the trajectory oscillates in height and winds once around the cylinder after each  period $T=T(k)$.
The cylinder admits a distinguished horizontal \emph{equatorial plane}, given by $\{z=h\}$,  where 
\[
  h = \frac{1}{2}\log\!\left|\frac{b}{a}\right|,
\]
represents the average height of the oscillating vertical coordinate $z(t)$. The surface is symmetric with respect to this plane: the horizontal \emph{inflection lines} (where $\ddot z=0$) lie in it, while the \emph{critical lines} (where $\dot z=0$) occur at the altitudes $z=h\pm A(k)$. Finally, the modulus $k$ is an invariant of the cylinder itself, independent of the particular geodesic it contains.

To frame the geometry of the Grayson cylinders, we introduce the following  global vector fields:
\[
\xi  = a \ee^{2z}\,\frac{\partial}{\partial x} + b \ee^{-2z}\,\frac{\partial}{\partial y} + (c-ax+by)\,\frac{\partial}{\partial z},
\qquad 
\eta = b\,\frac{\partial}{\partial x} + a\,\frac{\partial}{\partial y}.
\]
Both are tangent to the cylinder $\mathcal G_{a,b,c}$. The integral curves of $\xi$ are precisely the geodesics on $\mathcal{G}$, while those of $\eta$ are horizontal lines parallel to the axis. The  flows of $\xi$ and $\eta$ commute, they preserve the cylinder, and their angle is uniformly bounded away from $0$.

A generic geodesic winds around its associated Grayson cylinder with a constant horizontal drift after each period. For a generic unit–speed geodesic $\gamma$, we introduce the quantity $H$ defined by 
\[
   H^2 = {\,|x(t_0+T(k)) - x(t_0)| \; |y(t_0+T(k)) - y(t_0)|\,},
\]
which measures the net horizontal displacement after one oscillation. This expression is independent of the initial time $t_0$, and depends only on the modulus $k$.  More precisely we have 
\[
   H = H(k) = \frac{4E(k) - 2(1-k^{2})K(k)}{\sqrt{1-k^{2}}},
\]
where $E(k)$ denotes the complete elliptic integral of the second kind. We call $H(k)$ the \emph{horizontal drift invariant} of the geodesic $\gamma$, or the \emph{holonomy invariant} in the terminology of  \cite{CoiculescuSchwartz}.

\smallskip 
 
A fundamental property of the generic geodesics may be phrased as a \emph{rendez–vous theorem}. It asserts that for any generic unit–speed geodesic $\gamma$ and any time $t_{1}$, the points $\gamma(t_{1})$ and $\gamma(t_{1}+T(k))$ are either conjugate along $\gamma$, or else they can be joined by two distinct geodesic arcs of length $T(k)$. 

 The geometric mechanism is as follows: sliding the Grayson cylinder $\mathcal G_{a,b,c}$ horizontally to another cylinder $\mathcal G_{a,b,c'}$ (with the same constants $a,b$) produces two cylinders intersecting along distinct horizontal lines. Along these lines the horizontal components of the velocity coincide, while the vertical component agrees up to sign. This yields a natural \emph{partner} geodesic $\gamma^{\star}$, which intersects $\gamma$ again after one period $T(k)$.

\smallskip 
 
As a consequence of the rendez--vous phenomenon, a generic geodesic segment ceases to be minimizing once its length exceeds $T(k)$. The converse is a much deeper fact: it is the main theorem of Coiculescu and Schwartz~\cite{CoiculescuSchwartz}, who proved that a generic geodesic segment is minimizing precisely up to length $T(k)$, where $k$ is its modulus.

The same paper also provides a complete description of the cut locus,  beyond what we can 
summarize here. Let us just mention that the cut length is infinite for vertical or hyperbolic 
geodesics, equals $T(k)$ for generic ones, and in the horizontal case reduces to $\sqrt{2}\pi$, 
the infimum of $T(k)$. In all cases, the cut locus of a point is contained in its horizontal plane; 
in particular, $\Cut(0)\subset  \{z = 0\}$.

In addition to  these results, we  establish an asymptotic formula for large distances 
along fixed horizontal directions. Specifically, if $P_\lambda=\lambda(\cos\theta,\sin\theta,0)$, 
then the distance in $\SOL$ from the origin to $P_\lambda$ satisfies
\[
 d(0,P_\lambda)=4\log \lambda+O(1),
\]
as $\lambda\to\infty$.

\smallskip 

Finally, although our focus is on $\SOL$,  we include in Section~\ref{sec.nil} a brief  discussion of geodesics in the three-dimensional Heisenberg geometry $\NIL$, viewed from the same quadrature viewpoint. This both illustrates the general Lie-group/left-invariant framework and highlights which features are specific to $\SOL$.

\smallskip 
   
Our contribution is a self–contained geometric account of the geodesic flow in $\SOL$, based on quadratures and emphasizing invariant surfaces (the Grayson cylinders), the modulus as a complete invariant, and the winding/drift mechanism. While our results are consistent with those of Grayson \cite{Grayson} and Coiculescu–Schwartz \cite{CoiculescuSchwartz}—and indeed build on a substantial part of their work—our approach is different, and we hope it offers a complementary perspective on the geodesic flow in $\SOL$.

\smallskip 

 \textit{Organization of the paper.}  
The paper is organized as follows.  
Section~\ref{sec:background} recalls basic facts about $\SOL$ and sets up the notation.  
Section~\ref{sec:geodesics} introduces the geodesic equation and provides an initial classification into special and generic cases.  
Section~\ref{sec:generic} develops the core of the paper: the vertical periodicity, the geometry of the Grayson cylinders, the role of the modulus as a complete invariant, and the description of winding and drift, culminating in the rendezvous theorem.  
Section~\ref{sec:cutlocus} applies these results to the cut locus, including the logarithmic asymptotics for large distances.  
Two appendices complement the text: Appendix~A collects basic facts about elliptic integrals and their relation to the invariants $T(k)$ and $H(k)$, while Appendix~B provides a concise account of geodesics in the Heisenberg geometry $\NIL$.

\smallskip 

\textit{Acknowledgement.} The author thanks M.~P. Coiculescu, R.~E. Schwartz, and V.~Matveev for their comments on this chapter.

\section{Background on $\SOL$}\label{sec:background} 

By definition, the Lie group $\SOL$\index{SOL} is the space $\mathbb{R}^3$ endowed with the multiplication
$$
(x_1, y_1, z_1) \ast (x_2, y_2, z_2)  =  \bigl(x_1 + \ee^{z_1} x_2 ,\; y_1 + \ee^{-z_1} y_2,\; z_1 + z_2 \bigr).
$$
It is a semidirect product $\mathbb{R}^2 \rtimes \mathbb{R}$. The neutral element is the origin $O = (0,0,0)$, and a faithful linear representation is given by the map
$$
  \SOL \to \mathrm{GL}_3(\mathbb{R}), \qquad  (x,y,z) \mapsto 
 \begin{pmatrix}
\mathrm{e}^z & 0 & x \\
0 & \mathrm{e}^{-z} & y \\
0 & 0 & 1
 \end{pmatrix}.
$$
The left-invariant vector fields on $\SOL$ form its Lie algebra; a standard basis is 
\begin{equation}\label{FrameXYZ}
  X = \ee^z \frac{\partial}{\partial x}, \quad
  Y = \ee^{-z} \frac{\partial}{\partial y}, \quad
  Z = \frac{\partial}{\partial z},
\end{equation}
with Lie brackets
$$
  [Z, X] = X, \quad
  [Z, Y] = -Y, \quad
  [X, Y] = 0.
$$
This Lie algebra is solvable but not nilpotent, hence $\SOL$ is a solvable, non nilpotent,  Lie group.

\medskip

The linear map ($(1,1)-$tensor) defined on the Lie algebra  by 
$$
  X^{\dagger} = X, \quad
  Y^{\dagger} = Y, \quad
  Z^{\dagger} = -Z
$$
is called the \textit{vertical flip}. It is an  \emph{antimorphism} of the Lie algebra,  that is, 
$$[A, B]^{\dagger} = [B^{\dagger}, A^{\dagger}].$$

\bigskip 

The Riemannian metric on $\SOL$ defined by 
\begin{equation}\label{ds2}
 ds^2 = \ee^{-2z} dx^2 + \ee^{2z} dy^2 + dz^2
\end{equation}
is  left-invariant. The frame $\{X, Y, Z\}$ is orthonormal. 
With this metric, $\SOL$ is one of the eight model geometries of Thurston's classification, see the foundational survey by Peter Scott \cite{Scott}.
The left translations are isometries of $\SOL$. In particular we have
\begin{itemize}
\item[$\circ$]  {The horizontal translations}:
$$
    (x,y,z) \mapsto (x + w_1, y + w_2, z),
$$
\item[$\circ$]  {and the vertical lifts}:
$$
    (x,y,z) \mapsto \bigl(\ee^w x, \ee^{-w} y, z + w \bigr).
$$
\end{itemize}
Additionally, the isometry group contains some  symmetries fixing the origin:
\begin{itemize}
\item[$\circ$] the four coordinate sign changes
  $$
  (x, y, z) \mapsto (\varepsilon_1\,  x, \varepsilon_2 \, y, z), \quad \varepsilon_1, \varepsilon_2 \in \{\pm 1\},
  $$
\item[$\circ$] and the involutive isometry
  $$
  (x, y, z) \mapsto (y, x, -z).
  $$
\end{itemize}
These transformations generate the isotropy group at the identity, which  is a finite group of order 8, isomorphic to the dihedral group $D_4$. Together with translations and vertical lifts, they generate the full isometry group of $\SOL$.

\medskip

\begin{remark} \rm 
It is important to distinguish the points of $\SOL$ from the  tangent vectors, even though one might be tempted to identify both with $\R^{3}$. In particular, the vertical flip on tangent vectors satisfies $\|\xi^{\dagger}\|=\|\xi\|$, yet the map on points $(x,y,z)\mapsto(x,y,-z)$ is \emph{not} an isometry of $\SOL$. The operator ${\dagger}$ is a $(1,1)$–tensor and   it defines an isometric endomorphism of the tangent bundle, but this endomorphism is not the differential of any isometry of $\SOL$.
\end{remark}

\medskip 

$\SOL$ is a homogeneous but non-isotropic space, however   certain distinguished submanifolds provide useful insight into its geometry. In particular the vertical plane $\mathcal{H}'_0 = \{ y = 0 \}$ is pointwise fixed under the isometry $(x, y, z) \mapsto (x, -y, z)$ and is therefore a totally geodesic submanifold. It is isometric to the hyperbolic plane.\footnote{The coordinates $(x,z)$, in which the metric takes the form $ds^2 = \mathrm{e}^{-2z} dx^2 + dz^2$,  are commonly referred to as \emph{horocyclic coordinates} in  the hyperbolic plane.} Indeed, introducing coordinates  $u = x$, $v = \ee^z$, the induced metric on $\mathcal{H}'_0$ is
$$
\ee^{-2z} dx^2 + dz^2 = \frac{du^2 + dv^2}{v^2};
$$
this identifies $\mathcal{H}'_0$ with the Poincaré upper half-plane $\mathbb{H}^2$. Similarly, the subsets
$$
\mathcal{H}'_{y_0} = \{ y = y_0 \}, \qquad \mathcal{H}''_{x_0} = \{ x = x_0 \}
$$
are totally geodesic submanifolds of $\SOL$  isometric to $\mathbb{H}^2$ for any $x_0,y_0\in \R$.  The horizontal planes $\Pi_{z_0} = \{ z = z_0 \}$ also play an important role, albeit they are not  totally geodesic.

\medskip

In the left-invariant frame $\{X,Y,Z\}$, the Levi-Civita connection is computed in \cite{Troyanov98}.  
The only non-vanishing covariant derivatives are
\[
\begin{aligned}
\nabla_X X &= Z, \qquad &\nabla_X Z &= -X, \\[3mm]
\nabla_Y Y &= -Z, \qquad &\nabla_Y Z &= Y.
\end{aligned}
\]
The vertical flip $\dagger$ defined earlier satisfies the relation
\begin{equation*}\label{eq:dagger-connection}
  \nabla_{V^\dagger} V^\dagger = -\bigl(\nabla_V V\bigr)^\dagger.
\end{equation*}

\medskip

Recall that the Riemann curvature tensor $R$ is defined by
\[
R(U,V)W=\nabla_U\nabla_V W-\nabla_V\nabla_U W-\nabla_{[U,V]} W.
\]
Using the above formulas (and the fact that $[X,Y]=0$), we compute the sectional curvatures.
First,
\[
 R(X,Y)Y
   = \nabla_X \nabla_Y Y - \nabla_Y \nabla_X Y
   = -\nabla_X Z
   = -X,
\]
hence
\[
K(X,Y)=\langle R(X,Y)Y,X\rangle = 1.
\]
Similarly,
\[
K(X,Z)=K(Y,Z)=-1.
\]
These sectional curvatures agree with the fact that the vertical planes
$\mathcal{H}'_0$ and $\mathcal{H}''_0$ are totally geodesic and isometric
to the hyperbolic plane.

\medskip 

\begin{remark}\label{remarkHorizontal} \rm
We record here some convenient terminology and observations.

\smallskip

(1) For a point $p = (x,y,z)$, we  refer to the third coordinate $z$ as its
\emph{height} (or \emph{altitude}).

\smallskip

(2) Every isometry of $\SOL$ preserves the \emph{height difference}
between two points:
if $p_1 = (x_1,y_1,z_1)$ and $p_2 = (x_2,y_2,z_2)$, then
\[
|z_1 - z_2|
\]
is invariant under the full isometry group.

\smallskip

(3) Given two points $p_1 = (x_1,y_1,z)$ and $p_2 = (x_2,y_2,z)$ lying at the same altitude $z$, we define their
\emph{horizontal distance} by
\begin{equation}\label{horzdistance}
  \delta(p_{1},p_{2}) =\sqrt{\ee^{-2z}(x_{2}-x_{1})^{2}+\ee^{2z}(y_{2}-y_{1})^{2}}.
\end{equation}
This coincides with the Euclidean distance in the horizontal plane $\Pi_z = \{\, z = \mathrm{const} \,\}$ equipped with the metric induced by
\eqref{ds2}. In general, $\delta(p_1,p_2)$ is greater than the  Riemannian distance between $p_1$ and $p_2$, since $\Pi_z$ is not a totally geodesic submanifold.

\smallskip

(4) Every isometry of $\SOL$ maps each horizontal plane $\Pi_z$ affinely onto some horizontal plane, preserving Euclidean area (note that in a horizontal plane the Riemannian area coincides with the Euclidean one).

\smallskip

(5) In particular, if $p_1 = (x_1,y_1,z_1)$ and $p_2 = (x_2,y_2,z_2)$, then the product
\[
 |x_2 - x_1| \cdot |y_2 - y_1|
\]
is also preserved by every isometry of $\SOL$. 
\end{remark}

\section{Unveiling SOL's geodesics}\label{sec:geodesics}

\subsection{A first stroll  inside $\SOL$}

Let's imagine the Lie group $\SOL$ as a kind of large multi-story building, and suppose that we are in the lobby on the ground floor, situated at the origin $O = (0,0,0)$ and we crave for a drink at the bar, located at point $P = (p,p,0)$ \  ($p>0$).
One could of course  simply cut diagonally through the concourse along the path 
$$
  \alpha(t) = (t,t,0),\quad 0\le t\le p,
$$
which is indeed a geodesic of $\SOL$, with length $\sqrt2\,p$. But as $p$ grows, that path becomes excessively long. 
To reduce the distance, one should use the particular structure of the metric  \eqref{ds2}, which suggests  the following guideline:

\medskip

\begin{center}
  \begin{minipage}{0.85\linewidth}
    \textit{When on an upper floor, proceed west–east; when down  in the basement, move on south–north.}
  \end{minipage}
\end{center}

\medskip

More precisely, follow the following itinerary:

{\small
\begin{enumerate}[(i)]
\item From the main hall at $O=(0,0,0)$, take the elevator up to the upper floor at height $A$, keeping the $x$– and $y$–coordinates fixed at zero.
\item Walk along the west–east gallery to reach $(p,0,A)$, keeping $y=0$ and $z=A$ constant.
\item Take the elevator down to the basement level at $(p,0,-A)$, with $x=p$ and $y=0$ fixed.
\item Walk along the south–north corridor to $(p,p,-A)$, keeping $x=p$ and $z=-A$ constant.
\item Finally, take the elevator back to the ground floor to arrive at $P=(p,p,0)$.
\end{enumerate}
}

\tdplotsetmaincoords{60}{120}
\newcommand{\pval}{6}            
\pgfmathsetmacro{\hval}{ln(\pval/2)}  
\begin{center}

\begin{tikzpicture}[scale=0.7, tdplot_main_coords,
    axis/.style={->, thick},
    path3d/.style={ultra thick, blue},
    diag/.style={gray!50, very thick},      
    point/.style={fill=blue, circle, inner sep=1.5pt},
    label3d/.style={font=\small}
  ]

  \pgfmathsetmacro{\fac}{1.2}
  \pgfmathsetmacro{\xmin}{-0.2*\pval}
  \pgfmathsetmacro{\ymin}{-0.2*\pval}
  \pgfmathsetmacro{\xmax}{\fac*(\pval+1)}
  \pgfmathsetmacro{\ymax}{\fac*(\pval+1)}

  \filldraw[fill=gray!10, opacity=0.8, draw=none]
    (\xmin,\ymin,0) -- (\xmax,\ymin,0) -- (\xmax,\ymax,0) -- (\xmin,\ymax,0) -- cycle;

  \draw[axis] (0,0,0) -- (\pval+1,0,0) node[anchor=north east] {$x$};
  \draw[axis] (0,0,0) -- (0,\pval+1,0) node[anchor=north west] {$y$};
  \draw[axis] (0,0,0) -- (0,0,\hval+1) node[anchor=south]      {$z$};

  \coordinate (O) at (0,0,0);
  \coordinate (A) at (0,0,\hval);
  \coordinate (B) at (\pval,0,\hval);
  \coordinate (C) at (\pval,0,-\hval);
  \coordinate (D) at (\pval,\pval,-\hval);
  \coordinate (P) at (\pval,\pval,0);

  \draw[path3d]
    (O) -- (A) -- (B) -- (C) -- (D) -- (P);

  \draw[diag] (O) -- (P);
  
  \foreach \pt/\name in {O/$O$, P/$P$}{
    \fill[black] (\pt) circle (1.8pt);
    \node[label3d] at (\pt) [anchor=south west] {\name};
  }
\end{tikzpicture}
\end{center}

\medskip 

The length of this path for the metric \eqref{ds2} is 
$$
  L = 4A + 2p\,\ee^{-A}.
$$
It is minimised at $A=\log\bigl(\tfrac{p}{2}\bigr)$ (we henceforth assume $p\ge 2$), giving
$$
  L(p) = 4\log\bigl(\tfrac{p}{2}\bigr)+4 = 4 \log(p) + 4(1-\log(2)).
$$
Assuming $p$ large enough,  say $p\ge 6$, one has $L(p)<\sqrt{2}\,p$, and therefore 
$$
  d(O,P) < L(p) = 4 \log(p) + 1.23,
$$
which is much shorter than $\sqrt{2}\; p$.  Surprisingly, this crude construction  gives us the correct asymptotic estimates of the  true distance for large $p$, see Theorem \ref{th.largedistance}.

\subsection{The geodesic equation}

As mentioned in the introduction, the geodesic equations on  $\SOL$ form a completely integrable  system and can be solved by quadrature. Recall that a smooth curve $\gamma(t) = (x(t), y(t), z(t))$ in $\SOL$ is a geodesic if and only if it is a critical point of the action functional, which is defined as the integral
$$
 \mathcal{S}(\gamma) = \int \mathcal{L}(x, y, z, \dot{x}, \dot{y}, \dot{z}) \, dt,
$$
where the Lagrangian is given by 
$$
 \mathcal{L} = \mathcal{L}(x, y, z, \dot{x}, \dot{y}, \dot{z}) = \frac{1}{2}\|\dot{\gamma}(t)\|^2 =
 \frac{1}{2}\left(\mathrm{e}^{-2z}\dot{x}^2 + \mathrm{e}^{2z}\dot{y}^2 + \dot{z}^2\right).
$$

The associated Euler-Lagrange equations are
\begin{equation}\label{EulerLagrance}
  \frac{d}{dt}\frac{\partial \mathcal{L}}{\partial \dot{x}} = \frac{\partial \mathcal{L}}{\partial x}, \qquad
 \frac{d}{dt}\frac{\partial \mathcal{L}}{\partial \dot{y}} = \frac{\partial \mathcal{L}}{\partial y}, \qquad
 \frac{d}{dt}\frac{\partial \mathcal{L}}{\partial \dot{z}} = \frac{\partial \mathcal{L}}{\partial z}.
\end{equation}

Explicitly, these equations yield
\begin{equation}\label{eq.ELbis}
\frac{d}{dt}\left(\mathrm{e}^{-2z}\dot{x}\right) = 0, \quad \frac{d}{dt}\left(\mathrm{e}^{2z}\dot{y}\right) = 0, 
\quad \text{and} \quad \frac{d^2 z}{dt^2} = -\mathrm{e}^{-2z}\dot{x}^2 + \mathrm{e}^{2z}\dot{y}^2.
\end{equation}

\medskip

This system of ODEs decouples into three simpler equations:
\begin{lemma}\label{zpp}
A smooth curve $\gamma(t)=(x(t),y(t),z(t))$ is a geodesic in $\SOL$ if and only if there exist constants $a,b\in\mathbb{R}$ such that:
\begin{equation}\label{eq.xyp}
\dot{x}(t)=a\,\mathrm{e}^{2z(t)},\qquad \dot{y}(t)=b\,\mathrm{e}^{-2z(t)}.
\end{equation}
and
\begin{equation}\label{eq.zpp}
\ddot{z}(t)=-a^{2}\mathrm{e}^{2z(t)}+b^{2}\mathrm{e}^{-2z(t)}.
\end{equation}
\end{lemma}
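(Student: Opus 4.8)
The plan is to read the statement off directly from the Euler--Lagrange equations \eqref{eq.ELbis}, using that $x$ and $y$ are cyclic coordinates for the Lagrangian $\mathcal{L}$, so that their conjugate momenta are conserved.

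For the forward implication: if $\gamma$ is a geodesic then, by definition, it is a critical point of the action $\mathcal{S}$ and hence solves \eqref{EulerLagrance}, which in these coordinates is \eqref{eq.ELbis}. The first equation of \eqref{eq.ELbis} says $\frac{d}{dt}\bigl(\ee^{-2z}\dot x\bigr)=0$, so $\ee^{-2z(t)}\dot x(t)$ equals a constant, which we call $a$; solving for $\dot x$ gives the first relation of \eqref{eq.xyp}. Similarly the second equation yields $\ee^{2z(t)}\dot y(t)\equiv b$ for a constant $b$, the second relation of \eqref{eq.xyp}. Substituting $\dot x=a\,\ee^{2z}$ and $\dot y=b\,\ee^{-2z}$ into the right-hand side of the third equation of \eqref{eq.ELbis} gives $\ddot z=-\ee^{-2z}(a\,\ee^{2z})^2+\ee^{2z}(b\,\ee^{-2z})^2=-a^2\ee^{2z}+b^2\ee^{-2z}$, which is \eqref{eq.zpp}. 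Conversely, if \eqref{eq.xyp} and \eqref{eq.zpp} hold for some $a,b\in\R$, then $\ee^{-2z}\dot x\equiv a$ and $\ee^{2z}\dot y\equiv b$ are constant, so the first two equations of \eqref{eq.ELbis} hold; and the same computation gives $-\ee^{-2z}\dot x^2+\ee^{2z}\dot y^2=-a^2\ee^{2z}+b^2\ee^{-2z}=\ddot z$, the third equation. Hence $\gamma$ satisfies \eqref{eq.ELbis}, equivalently \eqref{EulerLagrance}, and so is a geodesic.

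There is essentially no obstacle here: the proof is a short manipulation whose only substantive input is that horizontal translations in $x$ and in $y$ are isometries of the metric \eqref{ds2}, so by Noether's theorem the momenta $\ee^{-2z}\dot x$ and $\ee^{2z}\dot y$ are first integrals. The one point worth noting is the identification of ``geodesic'' with ``solution of \eqref{eq.ELbis}'': since we have defined geodesics as critical points of the energy functional $\mathcal{S}$, this is precisely the Euler--Lagrange formalism, and no constant-speed normalization need be imposed by hand---it is automatic because $\mathcal{L}$ carries no explicit $t$-dependence. Alternatively, one could verify the three equations directly against the Levi-Civita connection formulas recorded in Section~\ref{sec:background}, but the Lagrangian route is the most economical.
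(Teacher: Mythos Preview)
Your proof is correct and follows essentially the same route as the paper: read off the constants $a,b$ from the first two Euler--Lagrange equations in \eqref{eq.ELbis} and substitute into the third to obtain \eqref{eq.zpp}. You are slightly more explicit than the paper in checking the converse implication, which is a small improvement.
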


\medskip 

We call $a$ and $b$ the \emph{principal constants of motion}; they are first integrals of the geodesic flow. 
It will be convenient to rewrite \eqref{eq.zpp} as
\begin{equation}\label{eq.zppp}
  \ddot{z}(t)=-\,U'_{a,b}\bigl(z(t)\bigr),
\end{equation}
where the \emph{potential function} is defined by
\begin{equation}\label{def.potential}
U_{a,b}(z)=\tfrac12\bigl(a^{2}\mathrm{e}^{2z}+b^{2}\mathrm{e}^{-2z}\bigr),
\end{equation}
and $U_{a,b}'(z)$ denotes the derivative with respect to $z$.

\begin{proof}[Proof of the Lemma]
From the first two equations in \eqref{eq.ELbis} we see that there exist constants $a,b\in\mathbb{R}$ such that
$$
  \dot{x}(t)=a\,\mathrm{e}^{2z(t)},\quad\dot{y}(t)=b\,\mathrm{e}^{-2z(t)},
$$
and \eqref{eq.xyp} follows. Equation \eqref{eq.zpp} is obtained by substituting these relations into the third  equation in \eqref{eq.ELbis}:
$$
\ddot{z}(t) =-\mathrm{e}^{-2z}\dot{x}^2+\mathrm{e}^{2z}\dot{y}^2=-a^2\mathrm{e}^{2z(t)}+b^2\mathrm{e}^{-2z(t)}.
$$
\end{proof}

\medskip

Since $\SOL$ is a homogeneous Riemannian manifold, it is complete and therefore all geodesics can be extended for all time $t \in \mathbb{R}$. Throughout the paper, we restrict our considerations  to geodesics of unit speed, for which the following relation holds:
$$
 \|\dot{\gamma}(t)\|^2 = \text{e}^{-2z}\dot{x}^2 + \text{e}^{2z}\dot{y}^2 + \dot{z}^2 = 1.
$$
Using the expressions for $\dot{x}$ and $\dot{y}$ in the previous Lemma, this identity reduces to
\begin{equation} \label{UnitSpeed}
  2U_{a,b}(z) + \dot{z}^2   =  a^2\text{e}^{2z} + b^2\text{e}^{-2z}  + \dot{z}^2 = 1.
\end{equation}
Observe that this normalization implies that  $2|ab| \leq  1$; indeed we have from \eqref{UnitSpeed}
\begin{equation} \label{boundab}
   2\, |ab| \leq    2U_{a,b}(z) = a^2\text{e}^{2z} + b^2\text{e}^{-2z}  \leq 1.
\end{equation}

\medskip 

The next lemma records two additional basic facts about geodesics in $\SOL$.

\begin{lemma}\label{basicrelations}
Let $\gamma(t)=(x(t),y(t),z(t))$ be a unit‐speed geodesic in $\SOL$. Then:
\begin{enumerate}[(i)]
\item The function 
  $$
    c  :=  a\,x(t)\;-\;b\,y(t)\;+\;\dot z(t)
  $$
  is constant along $\gamma$.
\item For every $t$ we have 
  $$
    \bigl(a\,x(t) - b\,y(t) - c\bigr)^{2}  +a^{2}\ee^{2z(t)}\;+\;b^{2}\ee^{-2z(t)}
     = 1.
  $$
\end{enumerate}
\end{lemma}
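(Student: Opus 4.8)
The plan is to prove part (i) by direct differentiation using the geodesic equations, and then obtain part (ii) as an algebraic consequence of (i) combined with the unit-speed relation \eqref{UnitSpeed}. For part (i), I would set $f(t) = a\,x(t) - b\,y(t) + \dot z(t)$ and compute $\dot f$. Using \eqref{eq.xyp} we have $\dot x = a\,\ee^{2z}$ and $\dot y = b\,\ee^{-2z}$, so $a\dot x - b\dot y = a^2\ee^{2z} - b^2\ee^{-2z}$. On the other hand, the geodesic equation \eqref{eq.zpp} gives $\ddot z = -a^2\ee^{2z} + b^2\ee^{-2z}$, which is exactly the negative of the previous expression. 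Hence $\dot f = a\dot x - b\dot y + \ddot z = 0$, so $f$ is constant along $\gamma$; this constant is what we call $c$.

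For part (ii), the key observation is that from the definition of $c$ in (i), we have $\dot z(t) = c - a\,x(t) + b\,y(t)$, so that $\bigl(a\,x(t) - b\,y(t) - c\bigr)^2 = \dot z(t)^2$. Substituting this into the unit-speed identity \eqref{UnitSpeed}, which reads $a^2\ee^{2z} + b^2\ee^{-2z} + \dot z^2 = 1$, immediately yields
\[
  \bigl(a\,x(t) - b\,y(t) - c\bigr)^2 + a^2\ee^{2z(t)} + b^2\ee^{-2z(t)} = 1,
\]
as claimed. Note this also explains the appearance of the Grayson cylinder $\mathcal G_{a,b,c}$ from the introduction: part (ii) says precisely that every unit-speed geodesic with constants $a,b,c$ lies on the level set $\{(a x - b y - c)^2 + 2U_{a,b}(z) = 1\}$.

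There is no real obstacle here — both parts are short computations. The only point requiring a little care is bookkeeping the sign in $\dot z = c - ax + by$ versus $ax - by - c = -\dot z$, and noting that squaring makes the sign irrelevant for part (ii). One should also remark that part (i) does not actually require the unit-speed normalization (it holds for any geodesic), whereas part (ii) genuinely uses \eqref{UnitSpeed} and hence the unit-speed assumption.
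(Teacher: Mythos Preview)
Your proof is correct and follows exactly the paper's approach: differentiate $c$ using \eqref{eq.xyp} and \eqref{eq.zpp} to get $\dot c\equiv 0$, then substitute $\dot z = -(ax-by-c)$ into the unit-speed identity \eqref{UnitSpeed}. The paper's version is simply terser, and your additional remarks (that (i) does not require unit speed, and the connection to the Grayson cylinder) are accurate and helpful.
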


\begin{proof}
From \eqref{eq.xyp} and \eqref{eq.zpp} we have
$$
  \dot c = a\dot x - b\dot y + \ddot z  \equiv 0,
$$
which proves the first claim. The second claim follows from the first and the  identity~\eqref{UnitSpeed}.
\end{proof}

\medskip 

\begin{definition}\label{def.critical}
 A point on a (non constant) geodesic $\gamma$ is said to be a \emph{critical point} if \,  $\dot z(t) = 0$. It is an \emph{inflection point} if \,   $\ddot z(t) = 0$.
\end{definition}
 From \eqref{zpp}, we see that  the inflection points only exist if  both  $a\neq 0$  and  $b\neq 0$. They  are characterized by the condition 
\begin{equation*}
   U_{a,b}'(z) = a^2\mathrm{e}^{2z} - b^2\mathrm{e}^{-2z} = 0, 
\quad \text{equivalently} \quad z = h(a,b) :=  \frac{1}{2} \log \left| \tfrac{b}{a}\right|. 
\end{equation*}
From  Lemma \ref{basicrelations},   the critical points are characterized by
$$
  2U_{a,b}(z) = a^2 \ee^{2z} + b^2 \ee^{-2z} = 1,    \quad \text{equivalently} \quad    c = ax - by.
$$

\subsection{A classification of geodesics}

Two complete geodesics $\gamma_1$, $\gamma_2$ are said to be \emph{geometrically equivalent}\index{geodesic!geometrically equivalent}
 if there exists an isometry $\Phi$ of $\SOL$ such that $\gamma_2$ is a reparametrization of $\Phi \circ \gamma_1$. In this section we discuss how the constants $a$ and $b$ control the shape of a given, non constant, geodesic. If $a = b = 0$, then $x$ and $y$ are constant  and the geodesic is a vertical line. The other geodesics are classified in the following three types:
\begin{enumerate}[-]
\item A geodesic is \textit{horizontal}\index{geodesic!horizontal}  if $z(t)$ is constant.
\item A geodesic is \textit{hyperbolic}\index{geodesic!hyperbolic}  if it is contained in a leaf of one of the vertical foliations $\mathcal{H}'$ or $\mathcal{H}''$, that is,  if either $x(t)$ or $y(t)$ is constant.
\item The other geodesics are said to be \textit{generic}.\index{geodesic!generic}
\end{enumerate}
In  \cite{Troyanov98}, these geodesics are respectively called of type A, B or C. It is clear that a geodesic is vertical if and only if it has no critical point and  it is horizontal if and only if all of its points are critical.
Hyperbolic geodesics have exactly one critical point and no inflection point. Generic geodesics encounter periodically both inflection points and critical (maximal and minimal) points. 

\medskip

The following notion  will play a key role in our geometric understanding of the geodesics:
\begin{definition}\label{def.modulus}
 The \emph{modulus}\index{modulus} of a (non trivial)  geodesic  $\gamma(t) = (x(t), y(t), z(t))$ in $\SOL$ is  the parameter $k\in [0,1]$ defined  as 
$$
  k := \sqrt{\frac{\|\dot \gamma\|^2-2|\dot x \dot y|}{\|\dot \gamma\|^2+2|\dot x \dot y|}}. 
$$ 
\end{definition}
The modulus is  constant along the geodesic and invariant under reparametrizations. 
If the geodesic has unique speed, then  using \eqref{eq.xyp}, we have:
$$
  k = \sqrt{\frac{1-2|\dot x \dot y|}{1+2|\dot x \dot y|}} = \sqrt{\frac{1-2|ab|}{1+2|ab|}},
$$ 
where $a$ and $b$ are the principal constants of motion. In particular, the modulus is a constant along the geodesic.

\medskip 

Our first result states that the modulus of a geodesic  is invariant under the isometries of $\SOL$:

\begin{lemma}\label{kisinvariant1}
Two geometrically equivalent geodesics have the same modulus. 
\end{lemma}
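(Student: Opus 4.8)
The strategy is to reduce the claim to the statement that each generator of the isometry group of $\SOL$ preserves the modulus, and since the modulus is already invariant under reparametrization (as noted right after Definition~\ref{def.modulus}), it suffices to check that for an isometry $\Phi$ and a geodesic $\gamma$ with principal constants $(a,b)$, the image $\Phi\circ\gamma$ has principal constants $(a',b')$ with $|a'b'| = |ab|$. I would work with the quantity $|\dot x\dot y|$ directly rather than with $a,b$, because the modulus in Definition~\ref{def.modulus} is phrased invariantly in terms of $\|\dot\gamma\|^2$ and $|\dot x\dot y|$, and $\|\dot\gamma\|^2$ is automatically preserved by any isometry. So the whole problem collapses to: every isometry of $\SOL$ preserves the (absolute value of the) product $\dot x\,\dot y$ of the first two velocity components, along any curve.

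The key steps are then to run through the list of generators given in Section~\ref{sec:background}. For the horizontal translations $(x,y,z)\mapsto(x+w_1,y+w_2,z)$, the velocity components $\dot x,\dot y,\dot z$ are unchanged, so $\dot x\dot y$ is unchanged. For the vertical lifts $(x,y,z)\mapsto(\ee^w x,\ee^{-w}y,z+w)$, the new velocity components are $\ee^w\dot x$ and $\ee^{-w}\dot y$, whose product is again $\dot x\dot y$ — the anisotropic scaling is exactly engineered so that this product is invariant. For the coordinate sign changes $(x,y,z)\mapsto(\varepsilon_1 x,\varepsilon_2 y,z)$, the product $\dot x\dot y$ changes by $\varepsilon_1\varepsilon_2=\pm1$, so $|\dot x\dot y|$ is preserved (this is exactly why the modulus uses the absolute value). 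Finally, for the involution $(x,y,z)\mapsto(y,x,-z)$, the velocity components $\dot x,\dot y$ are swapped, so their product is unchanged. Since these generate the full isometry group, $|\dot x\dot y|$ — hence the modulus — is invariant; it remains constant along the geodesic and unaffected by reparametrization, which completes the argument.

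There is essentially no serious obstacle here: the proof is a short, direct verification on generators, and the only thing to be careful about is the role of the absolute value, which is precisely what absorbs the sign ambiguity coming from the $D_4$ isotropy (and which is also why the definition is stated with $|\dot x\dot y|$ in the first place). One might alternatively phrase this more conceptually by invoking Remark~\ref{remarkHorizontal}(5) — that every isometry preserves the product $|x_2-x_1|\cdot|y_2-y_1|$ of coordinate differences, hence (taking an infinitesimal version, or differentiating along a curve) preserves $|\dot x\dot y|$ — but the generator-by-generator check is cleaner and self-contained, and is the version I would write up.
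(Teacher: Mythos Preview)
Your proposal is correct and follows essentially the same approach as the paper: check invariance of the modulus on a generating set for the isometry group. The paper's proof is terser --- it dismisses the isotropy group $D_4$ as ``clearly'' preserving the modulus and handles all left translations $(x,y,z)\mapsto(\ee^\lambda x+x_0,\ee^{-\lambda}y+y_0,z+\lambda)$ in one line --- whereas you spell out each generator separately and work with $|\dot x\dot y|$ rather than $|ab|$, but the content is the same.
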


\begin{proof}
We first observe that any isometry of $\SOL$ that belong to the isotropy group of the origin clearly does not affect the modulus of a geodesic. 
The  isometries coming from the left action of the group $\SOL$ have the form 
$$
\Phi_\lambda : (x, y, z) \mapsto ( \mathrm{e}^{\lambda} x + x_0, \mathrm{e}^{-\lambda} y+y_0, z + \lambda),
$$
for some $\lambda \in \R$; and a trivial calculation shows that the  geodesics
$\gamma$ and $\Phi_\lambda(\gamma)$ have the same modulus.
\end{proof}

\medskip

We will prove later in  Proposition \ref{kisinvariant2}  that the converse also holds:   \emph{two generic geodesics are geometrically equivalent if and only if they have the same modulus}.

\bigskip 

The  type of a geodesic can be described in terms of its modulus: 

\begin{lemma}\label{lem.3types}
Let $\gamma$ be a unit-speed geodesic in $\SOL$ with modulus $k\in[0,1]$. Then
\vspace{-5pt}
\begin{alignat*}{3}
& \gamma \text{ is vertical or hyperbolic} \quad &  \Leftrightarrow \quad & k=1,\\
& \gamma \text{ is generic}                    \quad &\Leftrightarrow\quad & 0<k<1,\\
& \gamma \text{ is horizontal}                 \quad &\Leftrightarrow\quad & k=0.
\end{alignat*}
In the last case, the height is constant and equals to 
$$
  z(t) =  h =  \tfrac{1}{2}\log \left|\frac{b}{a}\right|.
$$
\end{lemma}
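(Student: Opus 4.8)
The plan is to characterize each geodesic type by the vanishing pattern of the principal constants $a,b$, and then translate these conditions into conditions on $k=\sqrt{(1-2|ab|)/(1+2|ab|)}$. Observe first that $k=1 \Leftrightarrow ab=0$, i.e.\ $a=0$ or $b=0$; that $k=0 \Leftrightarrow 2|ab|=1$, which by the unit–speed bound \eqref{boundab} forces equality throughout, i.e.\ $a^2\ee^{2z}+b^2\ee^{-2z}=2|ab|$ for all $t$; and that $0<k<1 \Leftrightarrow 0<2|ab|<1$. So the three cases are exhaustive and mutually exclusive, and it remains to match them with the geometric types.

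\textbf{The case $k=1$.} Here $a=0$ or $b=0$. If $a=b=0$ the geodesic is vertical (as noted in the excerpt). If exactly one vanishes, say $b=0$ and $a\neq 0$, then $\dot y = b\,\ee^{-2z}\equiv 0$, so $y$ is constant and $\gamma$ lies in a leaf $\mathcal H'_{y_0}$, hence is hyperbolic; symmetrically if $a=0$, $b\neq 0$. Conversely, a vertical geodesic has $\dot x=\dot y\equiv 0$, forcing $a=b=0$; a hyperbolic geodesic has $x$ or $y$ constant, forcing $b=0$ or $a=0$ respectively. In all these situations $ab=0$, so $k=1$. This gives the first equivalence.

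\textbf{The case $k=0$.} Here $2|ab|=1$, and by \eqref{boundab} we get $a^2\ee^{2z}+b^2\ee^{-2z}=2|ab|$ for every $t$. By the AM--GM inequality $a^2\ee^{2z}+b^2\ee^{-2z}\geq 2|ab|\,\ee^{z}\ee^{-z}=2|ab|$, with equality iff $a^2\ee^{2z}=b^2\ee^{-2z}$, i.e.\ $\ee^{4z}=b^2/a^2$, i.e.\ $z=\tfrac12\log|b/a|=h$. Since equality holds for all $t$, $z(t)\equiv h$ is constant, so $\gamma$ is horizontal; moreover this identifies the constant altitude. (Note $k=0$ forces $a,b\neq 0$, so $h$ is well defined.) Conversely, if $\gamma$ is horizontal then $\dot z\equiv 0$ and $\ddot z\equiv 0$; from \eqref{eq.zpp}, $\ddot z=0$ gives $a^2\ee^{2z}=b^2\ee^{-2z}$, and then \eqref{UnitSpeed} reads $2a^2\ee^{2z}=1$, while $2|ab|=2|a|\,|b|$; combining $a^2\ee^{2z}=b^2\ee^{-2z}$ with $a^2\ee^{2z}=\tfrac12$ yields $|ab|=\sqrt{(a^2\ee^{2z})(b^2\ee^{-2z})}=a^2\ee^{2z}=\tfrac12$, so $2|ab|=1$ and $k=0$. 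This gives the third equivalence, together with the stated value of the constant height.

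\textbf{The case $0<k<1$.} This now follows by elimination: the three conditions $k=1$, $0<k<1$, $k=0$ partition $[0,1]$, and the first and third have just been matched with "vertical or hyperbolic" and "horizontal" respectively; since "vertical, hyperbolic, horizontal, generic" also partition the non-constant geodesics (vertical being the $a=b=0$ case, and the classification in the excerpt being exhaustive), the middle condition $0<k<1$ must correspond exactly to the generic geodesics. I do not expect any serious obstacle here; the only point requiring a little care is the $k=0$ direction, where one must genuinely invoke the unit–speed normalization \eqref{boundab} to force the equality case of AM--GM \emph{pointwise in $t$}, rather than merely at one instant — this is what upgrades "$\ddot z=0$ at a point" to "$z$ constant."
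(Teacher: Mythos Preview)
Your proof is correct and follows essentially the same route as the paper's: both identify the $k=1$ case with $ab=0$ via the definition, and for $k=0$ both use the AM--GM--type inequality $a^{2}\ee^{2z}+b^{2}\ee^{-2z}\ge 2|ab|$ together with \eqref{UnitSpeed} to force $\dot z\equiv 0$, then handle the converse by reading $\ddot z=0$ from \eqref{eq.zpp} and substituting back into \eqref{UnitSpeed}. The only cosmetic difference is that you extract the value $z=h$ directly from the equality case of AM--GM, whereas the paper first concludes $\dot z\equiv 0$ and then recovers $z=h$ in the converse computation.
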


\begin{proof}
Vertical and hyperbolic geodesics are characterized by the condition $\dot x \dot y  \equiv 0$, which is equivalent to $k = 1$. 
To distinguish between horizontal and generic geodesics, we use the inequality
$$
  2|ab| \leq a^2\text{e}^{2z} + b^2\text{e}^{-2z}.
$$
Combined with \eqref{UnitSpeed}, this implies
$$
 \dot{z}^2 = 1 - \left( a^2\text{e}^{2z} + b^2\text{e}^{-2z} \right) \leq 1 - 2|ab|.
$$
It follows that if  $k=0$, then  $2|ab| = 1$, therefore  $\dot{z} \equiv 0$ and the geodesic is horizontal.

\smallskip 

Conversely, if $z$ is constant and $ab\neq0$, then equation \eqref{eq.zpp} yields
$$
 0 = \ddot{z} = -a^2\text{e}^{2z} + b^2\text{e}^{-2z} \quad \Rightarrow \quad \ee^{4z} = \left( \frac{b}{a} \right)^2,
$$
which gives
$$
 z = \tfrac{1}{2} \log \left| \frac{b}{a} \right|.
$$
Substituting into \eqref{UnitSpeed} gives us 
$$
 1 = a^2\text{e}^{2z} + b^2\text{e}^{-2z} =  2|ab|,
$$
therefore  $2|ab| = 1$,  that is, $k = 0$. The proof of the Lemma is complete.
\end{proof}

 \newpage
 
\subsection{Special geodesics} \label{special}

In this section we completely describe all the special geodesics. 

\subsubsection{Vertical and horizontal geodesics}

A trivial case occurs when $a=b=0$. Then $\dot x=\dot y=0$ and $\ddot z=0$, hence $x$ and $y$ are constant while $z$ is an affine function of $t$; these are the vertical geodesics.

\medskip

Another simple case arises when $ab\neq 0$ and, for some $t_{0}$, one has
\[
\dot z(t_{0})=0\quad\text{and}\quad z(t_{0})=h=\tfrac12\log\!\left|\frac{b}{a}\right|.
\]
At this point the potential attains its minimum, $U_{a,b}(h)=|ab|$. Using \eqref{UnitSpeed}, we  see that 
$2 |ab| = 2U_{a,b}(h) = 1$, hence $2|ab| = 1$, which implies that the modulus $k = 0$. By Lemma \ref{lem.3types},  the geodesic is horizontal at  height $h$.
Moreover,
\[
\dot x = a\,\mathrm{e}^{2h}=\operatorname{sgn}(a)\,|b|=\pm b,
\qquad
\dot y = b\,\mathrm{e}^{2h}=\operatorname{sgn}(b)\,|a|=\pm a,
\]
so the curve is a horizontal straight line of the form
\[
 \gamma(t) = (x_{0}\pm b\,t,\; y_{0}\pm a\,t,\; h).
\]
In the moving frame \eqref{FrameXYZ}, we have 
$$
 \dot\gamma(t) = \frac{1}{\sqrt{2}}\left(\pm X \; \pm Y\right),
$$
hence the  horizontal geodesic  $\gamma$ meets each vertical foliation at angle $\pi/4$.

\subsubsection{Hyperbolic geodesics.} \label{sec.HyperbolicGeodesics}

We now consider the case $b = 0$ and $a \ne 0$, so that $\dot{y} = b\, \mathrm{e}^{-2z} \equiv 0$. Hence $y(t)$ is constant and can be ignored. Without loss of generality, we assume $a>0$, and normalize the speed to  $\|\dot{\gamma}(t)\| = 1$.

\medskip 

In that case, describing the geodesics is almost trivial. 
Indeed, we know from Lemma \ref{basicrelations} that $c = \dot z +ax$ is constant and that 
$$
  (ax  - c)^2 + a^2 \ee^{2z}  = 1.
$$
This relation completely describes all the possible hyperbolic geodesics (with $b=0$).  

Introducing the  coordinates $ u = x $ and $ v = \mathrm{e}^{z} $, we  rewrite this as
$$
 (u - u_0)^2 + v^2 = \frac{1}{a^2},
$$
where $u_0 = c/a$. This  is the equation of a Euclidean semicircle centered along the   axis $\{v=0\}$. This is consistent with the classical model of geodesics in the Poincaré upper half-plane, as expected.

 \medskip

So far, we have obtained the relation between the coordinates $x$ and $z$ satisfied by a  hyperbolic geodesic. To express these variables as function of the time $t$, we recall that  from \eqref{UnitSpeed} we have
$$
  \dot z^2 + a^2 \mathrm{e}^{2z}  =  1,
$$
which implies that $z(t)\le -\log (a)$.  A direct calculation shows that the solution to this equation with initial value $z(0) = z_{\mathrm{max}} = -\log (a)$
(so $\dot z (0) = 0$)   is given by 
$$
 z(t) = -\log \left(a \cosh t\right).
$$
Using now  \eqref{eq.xyp}, we have
$$
   \frac{dx}{dt} = a\,\mathrm{e}^{2z} =  \frac{1}{a\cosh^2(t)},
$$
and therefore 
$$
x(t) = x_0 + \int \frac{dt}{a \cosh^2 t} 
= x_0 + \frac{1}{a} \tanh(t),
$$
where $x_0$ is a constant of integration. 

\medskip

The unique  geodesic segment connecting the  two points $ (0, 0, 0) $ and $ (\lambda, 0, 0) $ in $\SOL$ is the curve
$$
 \beta : \left[ -\arccosh\left( \tfrac{1}{a} \right),\ \arccosh\left( \tfrac{1}{a} \right)\right]  \to \SOL
$$
defined by
$$
  \beta(t) = \left(\tfrac{\lambda}{2}+\tfrac{1}{a}\cdot \tanh(t), \, 0 , \ -\log\left( a \cosh(t) \right)\right),
$$
where  $a = \frac{2}{\sqrt{ \lambda^2+4 }}$. 
The distance between the endpoints is therefore
\[
  d  =  2\arccosh\!\left(\tfrac{1}{a}\right)   =  2\arccosh\!\left(\tfrac{\sqrt{\lambda^{2}+4}}{2}\right)
         =  2\operatorname{arsinh}\!\left(\tfrac{\lambda}{2}\right).
\]
We have asymptotically 
$$
 d = 2\log\lambda \;+\; O(1/\lambda^{2}).
$$
as $ \lambda \to \infty$.

\medskip

If $a=0$ and $b>0$, the roles of $x$ and $y$ are interchanged. Writing $c=\dot z - b\,y$ (which is constant), we obtain
\[
(by+c)^2 + b^2 \ee^{-2z} = 1.
\]
Assuming $\dot z(0)=0$ gives $c=-b\,y_0$, and the same analysis yields
\[
y(t)=y_0+\frac{1}{b}\tanh t,\qquad z(t)=\log\bigl(b\cosh t\bigr).
\]

\section{The generic geodesics}\label{sec:generic} 

For a generic unit-speed geodesic $\gamma(t)=(x(t),y(t),z(t))$ in $\SOL$, with nonzero constants of motion $a,b$, we define
the \textit{average height} of $\gamma$ as  
\[
  h = h(a,b)  =  \frac{1}{2}\log \left|\frac{b}{a}\right|.
\]
The parameters $a,b,h$ and the modulus $k$ are related by the identities
\begingroup
\setlength{\fboxsep}{6pt}
\setlength{\fboxrule}{0.3pt}
\renewcommand{\arraystretch}{1.2}
\begin{equation}\label{eq:hkab-summary}
\boxed{%
\begin{array}{@{} r@{\,=\,}l @{\qquad\quad} r@{\,=\,}l @{}}
k & \sqrt{\dfrac{1-2|ab|}{\,1+2|ab|\,}} &
2|ab| & \dfrac{1-k^{2}}{1+k^{2}} 
\\[6mm]
h & \tfrac{1}{2}\log \left|\dfrac{b}{a}\right|  &
 \mathrm{e}^{2h} & \left|\dfrac{b}{a}\right| 
\\[6mm]
|a| & \mathrm{e}^{-h}\sqrt{\dfrac{1-k^{2}}{2(1+k^{2})}} &
|b| & \mathrm{e}^{h}\sqrt{\dfrac{1-k^{2}}{2(1+k^{2})}}
\end{array}}
\end{equation}
\endgroup

\medskip

\begin{remark}
\begin{enumerate}[--]\rm
\item The signs of $a$ and $b$ are not determined by $k$ and $h$. They can be adjusted by isometries (reflections), so one may assume $a,b>0$ if desired.
\item We will often adopt the following convention:  \textit{If a function $f$ depends  on $|ab|$, we simply write $f(k)$ without
systematically substituting  $|ab|$ by  its value \, $\frac{1-k^{2}}{2(1+k^{2})}$.}
\end{enumerate}
\end{remark}

\subsection{The vertical periodicity in generic geodesics}\label{sec.discussZ}
With the above notation, the potential can be written as
\begin{equation}\label{eq:Ucosh}
  U_{a,b}(z)=\tfrac12\bigl(a^{2}\mathrm{e}^{2z}+b^{2}\mathrm{e}^{-2z}\bigr)
  =|ab|\,\cosh\bigl(2(z-h)\bigr).
\end{equation}
In particular, \eqref{eq.zpp} takes the form\footnote{This is the analog of the classical pendulum equation $\ddot{\theta}=-\omega^{2}\sin\theta$, with $\sin$ replaced by $\sinh$. The solutions share the same qualitative features: each trajectory oscillates periodically about a stable equilibrium.}
\begin{equation}\label{eq.zppbis}
    \frac{d^{2}}{dt^{2}}\bigl(z-h\bigr) = -\frac{d}{dz}U_{a,b}(z)
    = -\,2|ab|\,\sinh\bigl(2(z-h)\bigr).
\end{equation}

To construct a generic unit-speed geodesic $\gamma(t)=(x(t),y(t),z(t))$ in $\SOL$, we first solve the scalar ODE \eqref{eq.zppbis} for $z(t)$. The horizontal coordinates then follow by direct integration from
\[
  \dot{x}=a\,\mathrm{e}^{-2z},\qquad \dot{y}=b\,\mathrm{e}^{2z}.
\]

Since $ab\neq 0$ by hypothesis, the potential $U_{a,b}$ is smooth, strictly convex, and attains its unique minimum
\[
  \min U_{a,b}(z)=|ab|\quad\text{at}\quad z=h=\tfrac12\log\Bigl|\tfrac{b}{a}\Bigr|.
\]

 From \eqref{UnitSpeed} we have: 
 \begin{equation}\label{Lagrangian3}
 	\dot z^2 + 2U_{ab}(z) = \dot z^2 +  2|ab| \cosh(2(z-h)) = 1.
 \end{equation}
 In particular we have $|\dot z| \leq \sqrt{1-2|ab| }$,  and 
$$
 z_{\min} =   h - A \  \leq  \ z(t)  \ \leq h + A = z_{\max},
$$
where the \textit{amplitude}\index{geodesic!amplitude} is defined by $A > 0$ and $2 U_{a,b}(h+A) = 1$, that is,
\begin{equation}\label{eq.amplitude}
 A = A(k) =  \frac{1}{2}\arccosh\left( \frac{1}{2 |ab|}\right)  =   \mathrm{arctanh} (k).
\end{equation}
Equation \eqref{Lagrangian3} describes a simple closed convex curve $\mathcal{C}_{a,b}$ in the $(z,\dot z)$-plane (the \emph{phase curve}). Any solution of \eqref{eq.zppbis} yields a smooth parametrization of $\mathcal{C}_{a,b}$. This curve is invariant under the symmetries $\dot z\mapsto -\dot z$ and $z\mapsto 2h-z$.

\begin{center}
\begin{tikzpicture}[scale=0.81]
  \def\h{1}        
  \def\ab{0.15}    
  \begin{axis}[
    axis lines       = middle,
    axis line style  = {-stealth},
    xlabel           = {$z$},
    ylabel           = {$\dot z$},
    xlabel style     = {below},
    ylabel style     = {right},
    ticks            = none,
    axis equal image,
    enlarge x limits = 0.1,
    enlarge y limits = 0.2,
    width  = 8cm,
    height = 6cm,
  ]
  \addplot[dashed] coordinates {(\h,-0.05) (\h,0.05)};
  \node[below] at (axis cs:\h,0) {$h$};
  \addplot[
    thick,
    domain=-1.9:1.9,
    samples=200,
    unbounded coords=jump,
    postaction={
      decorate,
      decoration={
        markings,
        mark=at position 0.72 with {\arrow[scale=1.2]{>}}
      }
    },
  ]
  ({x+\h}, {sqrt(max(0,1 - 2*(\ab)^2*cosh(2*x)))});
  \addplot[
    thick,
    domain=-1.9:1.9,
    samples=200,
    unbounded coords=jump,
  ]
  ({x+\h}, {-sqrt(max(0,1 - 2*(\ab)^2*cosh(2*x)))});
  \end{axis}
\end{tikzpicture}
\end{center}

\bigskip

Any solution $z(t)$ of \eqref{eq.zppbis} is  periodic, with period
\begin{equation}\label{PeriodT0}
   T = 2 \int_{z_{\min}}^{z_{\max}}   \frac{dz}{\sqrt{1 - 2U_{a,b}(z)}}
\end{equation}
To see why this is true, we write \eqref{Lagrangian3} as
$$
  \left| \frac{dz}{dt}\right| = \sqrt{1 - 2U_{a,b}(z)}.
$$
When $z(t)$ covers one half‐oscillation from $z_{\min}$ to $z_{\max}$,  we  have $\dfrac{dz}{dt} \geq 0$ and the above equation 
can be written in the following differential form:  
$$
  dt = \frac{dz}{\sqrt{1 - 2U_{a,b}(z)}}.
$$
Integrating this relation gives us \eqref{PeriodT0}. See \cite{Arnold1, Arnold2} for more details. 

\medskip

\begin{lemma}
The period only depends on the modulus $k$. More precisely, we have
\begin{equation}\label{PeriodT_general}
   T =      T(k)=4\!\int_{0}^{A(k)}
        \frac{d\zeta}{\sqrt{1-
        \bigl(\tfrac{1-k^{2}}{1+k^{2}}\bigr)\cosh(2\zeta)}}.
\end{equation}

\end{lemma}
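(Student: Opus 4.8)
The plan is to start from the formula \eqref{PeriodT0} for the period,
\[
  T = 2\int_{z_{\min}}^{z_{\max}} \frac{dz}{\sqrt{1 - 2U_{a,b}(z)}},
\]
and carry out the substitution that re-centers the oscillation at the equilibrium height $h$. Setting $\zeta = z - h$, we have $dz = d\zeta$, and by \eqref{eq:Ucosh} the integrand becomes $2U_{a,b}(z) = 2|ab|\cosh(2\zeta)$. Since $z_{\min} = h - A$ and $z_{\max} = h + A$, the limits become $-A$ and $A$. The integrand is even in $\zeta$ (because $\cosh$ is even), so the integral over $[-A, A]$ equals twice the integral over $[0, A]$, giving
\[
  T = 4\int_{0}^{A} \frac{d\zeta}{\sqrt{1 - 2|ab|\cosh(2\zeta)}}.
\]

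The final step is to replace $2|ab|$ by its expression in terms of the modulus. From the identities \eqref{eq:hkab-summary} (or directly from Definition~\ref{def.modulus}), $2|ab| = \frac{1-k^{2}}{1+k^{2}}$, and by \eqref{eq.amplitude} the amplitude $A = A(k) = \arctanh(k)$ is already a function of $k$ alone. Substituting both of these into the displayed integral yields exactly \eqref{PeriodT_general}. Since the resulting expression involves only $k$, this simultaneously proves that $T$ depends only on the modulus, establishing the first assertion of the lemma.

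I do not expect any serious obstacle here: the argument is a change of variables plus a parity observation plus a bookkeeping substitution, all of which are elementary given the material already developed (the form \eqref{eq:Ucosh} of the potential, the value \eqref{eq.amplitude} of the amplitude, and the table \eqref{eq:hkab-summary}). The only point requiring a word of care is the justification of the improper integral at the endpoint $\zeta = A$, where the integrand blows up like $(A-\zeta)^{-1/2}$; since this is an integrable singularity (the phase curve $\mathcal{C}_{a,b}$ is smooth and the turning point is nondegenerate because $U_{a,b}'' > 0$ everywhere), the integral converges and the period is finite, consistent with the derivation of \eqref{PeriodT0} already given via the differential form $dt = dz/\sqrt{1 - 2U_{a,b}(z)}$. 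One could also remark that the explicit evaluation of this integral in terms of the complete elliptic integral $K(k)$, yielding $T(k) = \sqrt{8(1+k^{2})}\,K(k)$, is deferred to Appendix~A.
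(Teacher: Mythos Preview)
Your proposal is correct and follows essentially the same route as the paper: the substitution $\zeta=z-h$, the identification $2U_{a,b}(z)=2|ab|\cosh(2\zeta)$ via \eqref{eq:Ucosh}, the symmetric limits $\pm A(k)$, and the final replacement $2|ab|=\tfrac{1-k^{2}}{1+k^{2}}$. Your additional remarks on the parity of the integrand and the integrability of the endpoint singularity are welcome clarifications that the paper leaves implicit.
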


\begin{proof}
Introducing the notation $\zeta =z-h$, we  have  $ - A(k)\, \leq \, \zeta \leq  A(k)$ and 
$$
 2U(z)=  2|ab| \cosh(2\zeta).
$$
Applying \eqref{PeriodT0}  gives us 
$$
 T = 4  \int_{0}^{A(k)}   \frac{d\zeta}{\sqrt{1 - 2U(h + \zeta)}} =  4 \int_{0}^{A(k)} \frac{d\zeta }{\sqrt{1 -  2|ab|\,\cosh\bigl(\zeta\bigr)}}.
$$
\end{proof}

\medskip
 
Basic properties of the period $T(k)$ are collected in Sections \ref{sec.TMHelliptic} and \ref{sec.TMHproperties} at the end of the paper, 
including its representation in terms of the complete elliptic integral of the first kind $K(k)$. The  integration of $x(t)$ and $y(t)$ is discussed in Section~\ref{sec.winding} below.

\subsection{The Grayson cylinder}\label{sec:grayson}

To each generic geodesic $\gamma$ in $\SOL$ we associate a companion surface that conveniently encodes some key features of the geodesic flow.

\begin{definition}\label{def.grayson}\rm
Given a unit-speed generic geodesic $ \gamma(t) $ in $ \SOL$ with constants of motion $a,b$,  
we define its \emph{Grayson cylinder}\index{Grayson cylinder}  as the level set
\[
  \mathcal{G}=\mathcal{G}_{a,b,c}  := \bigl\{(x,y,z)\in\SOL \;\bigm|\; (a x - b y - c)^{2} + 2\,U_{a,b}(z)=1 \bigr\} \subset \SOL, 
\]
where $c := a\,x(t) - b\,y(t) + \dot z(t)$ is the constant from Lemma~\ref{basicrelations}\,(i). 

\medskip 

The \emph{modulus} of $\mathcal{G}$ is by definition the modulus of the underlying geodesic:
\[
  k = \sqrt{\tfrac{1-2|ab|}{1+2|ab|}}.
\]
In particular, $k$ depends only on the cylinder $\mathcal{G}$ (all geodesics it contains have the same modulus).
\end{definition}

\medskip

This surface is smooth and diffeomorphic to $S^{1}\!\times\!\mathbb{R}$; its relevance is captured by the following

\begin{proposition}
A generic geodesic $\gamma$ is entirely contained in its associated Grayson cylinder $\mathcal{G}_{a,b,c}$.
\end{proposition}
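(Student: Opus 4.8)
The plan is to observe that this proposition is essentially a reformulation of Lemma~\ref{basicrelations}\,(ii). First I would unwind the definition of the potential: by \eqref{def.potential} we have $2U_{a,b}(z) = a^{2}\mathrm{e}^{2z} + b^{2}\mathrm{e}^{-2z}$, so that the defining equation of $\mathcal{G}_{a,b,c}$ reads
\[
  (ax - by - c)^{2} + a^{2}\mathrm{e}^{2z} + b^{2}\mathrm{e}^{-2z} = 1 .
\]
Next I would recall that the constant $c$ entering Definition~\ref{def.grayson} is precisely the conserved quantity $c = a\,x(t) - b\,y(t) + \dot z(t)$ furnished by Lemma~\ref{basicrelations}\,(i), and then invoke part (ii) of that same lemma, which asserts that
\[
  \bigl(a\,x(t) - b\,y(t) - c\bigr)^{2} + a^{2}\mathrm{e}^{2z(t)} + b^{2}\mathrm{e}^{-2z(t)} = 1
\]
holds for every $t$. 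Comparing the two displays shows that $\gamma(t) = (x(t),y(t),z(t))$ satisfies the defining equation of $\mathcal{G}_{a,b,c}$ for all $t$, hence $\gamma \subset \mathcal{G}_{a,b,c}$.

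There is no real obstacle here: the analytic content has already been isolated in Lemma~\ref{basicrelations}, and the only point to verify is the bookkeeping that the $c$ in Definition~\ref{def.grayson} coincides with the $c$ of that lemma, which holds by construction. For a fully self-contained argument one can re-derive the identity on the spot: $c$ is constant along $\gamma$ since $\dot c = a\dot x - b\dot y + \ddot z = 0$ by \eqref{eq.xyp} and \eqref{eq.zpp}; therefore $\bigl(a\,x(t) - b\,y(t) - c\bigr)^{2} = \dot z(t)^{2}$, and the defining relation of $\mathcal{G}_{a,b,c}$ evaluated at $\gamma(t)$ becomes $\dot z^{2} + a^{2}\mathrm{e}^{2z} + b^{2}\mathrm{e}^{-2z} = 1$, which is exactly the unit-speed normalization \eqref{UnitSpeed}. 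This makes the containment transparent and also explains the shape of the level set: it is the locus where the unit-speed constraint is compatible with the frozen values $a,b,c$ of the first integrals.

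Finally, I would remark that the statement says nothing about which geodesic in $\mathcal{G}_{a,b,c}$ one obtains, nor about the smoothness or diffeomorphism type of the surface — those are addressed separately — so the proof of this proposition is complete once the pointwise membership $\gamma(t)\in\mathcal{G}_{a,b,c}$ is established for all $t\in\mathbb{R}$.
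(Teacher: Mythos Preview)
Your proof is correct and follows exactly the same route as the paper: the author's proof is the single line ``Immediate from Lemma~\ref{basicrelations}'', and you have simply unpacked that reference (and, optionally, re-derived the lemma via \eqref{UnitSpeed}). There is nothing to add.
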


\begin{proof}
Immediate from Lemma~\ref{basicrelations}.
\end{proof}

This observation and its importance for understanding geodesics in $\SOL$, was first made and emphasized by M.~A.~Grayson~\cite{Grayson}\footnote{More generally, the existence of such invariant cylinders reflects the complete (super) integrability in the Liouville sense of the geodesic flow, see \cite[\S 49, 50]{Arnold2}.} in his thesis. We borrow the terminology from  \cite{CoiculescuSchwartz}.

\medskip

The following lemma describes useful bounds containing the Grayson cylinder:
\begin{lemma}\label{boundsGrayson}
The Grayson cylinder $\mathcal{G}_{a,b,c}$ is contained in the region defined by 
\[
  |a x - b y - c| \le \sqrt{1 - 2|ab|} 
  \quad  \mathrm{and} \quad 
  |z- h| \le A(k),
\]
where
$A(k)$ is the amplitude defined in \eqref{eq.amplitude}.
\end{lemma}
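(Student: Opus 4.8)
The plan is to read both inequalities directly off the defining equation of $\mathcal{G}_{a,b,c}$, namely $(ax-by-c)^2 + 2U_{a,b}(z) = 1$, combined with the closed form $2U_{a,b}(z) = 2|ab|\cosh\!\bigl(2(z-h)\bigr)$ from \eqref{eq:Ucosh}. The point is that on the cylinder the quantity $(ax-by-c)^2$ equals $1 - 2U_{a,b}(z)$, so the nonnegativity of one side controls the size of $z-h$, while the elementary lower bound $\cosh \ge 1$ controls the size of $ax-by-c$.

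For the height bound, I would argue as follows. Since $ab \neq 0$ for a generic geodesic, $h$ is well defined and $U_{a,b}$ attains its minimum $|ab|$ at $z=h$. On $\mathcal{G}_{a,b,c}$ we have $(ax-by-c)^2 = 1 - 2U_{a,b}(z) \ge 0$, hence $2|ab|\cosh\!\bigl(2(z-h)\bigr) = 2U_{a,b}(z) \le 1$. As $\cosh$ is even and strictly increasing on $[0,\infty)$, this is equivalent to $\cosh\!\bigl(2|z-h|\bigr) \le \tfrac{1}{2|ab|}$, i.e.\ $|z-h| \le \tfrac12\arccosh\!\bigl(\tfrac{1}{2|ab|}\bigr)$, which is exactly $A(k)$ by the definition \eqref{eq.amplitude}. (Note $\arccosh$ makes sense here because $2|ab|\le 1$ by \eqref{boundab}, with equality excluded in the generic case.)

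For the horizontal bound, I would simply use $\cosh\!\bigl(2(z-h)\bigr) \ge 1$, so that $2U_{a,b}(z) \ge 2|ab|$ everywhere, and therefore on $\mathcal{G}_{a,b,c}$ one has $(ax-by-c)^2 = 1 - 2U_{a,b}(z) \le 1 - 2|ab|$, i.e.\ $|ax-by-c| \le \sqrt{1-2|ab|}$; if desired one can record $1-2|ab| = \tfrac{2k^2}{1+k^2}$ via the identities in \eqref{eq:hkab-summary}. There is essentially no obstacle here: the whole lemma is a one-line consequence of the level-set equation and the monotonicity of $\cosh$. The only points requiring a word of care are that we are in the generic regime $ab\neq 0$, so that $h$ and $A(k)$ are meaningful, and that $2|ab|<1$ strictly, so that $A(k)=\arctanh(k)$ is finite; both are guaranteed by $0<k<1$.
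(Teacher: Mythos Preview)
Your proof is correct and follows essentially the same approach as the paper: both arguments read the two bounds directly off the defining equation $(ax-by-c)^2+2U_{a,b}(z)=1$, using $U_{a,b}(z)\ge|ab|$ for the horizontal bound and the equivalence $2U_{a,b}(z)\le 1 \Leftrightarrow |z-h|\le A(k)$ for the height bound. You simply spell out the $\cosh$ monotonicity step a bit more explicitly.
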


\begin{proof}
Since  $U_{a,b}(z)\ge |ab|$, we have
$$
  (a x - b y - c)^{2} = 1 - 2\,U_{a,b}(z) \le 1 - 2|ab|,
$$
the first inequality is then obvious from the definition of $\mathcal{G}_{a,b,c}$. The second inequality follows from the  equivalence 
$$
   2U_{a,b}(z) \leq 1  \quad \Leftrightarrow \quad       |z-  h|  \leq   A(k). 
$$
\end{proof}

\medskip

We now introduce some  terminology:
\begin{definition}\rm
\begin{enumerate}[(1)]
\item The horizontal plane defined by $z=h=\tfrac{1}{2}\log\!\left|\tfrac{b}{a}\right|$ is called the \emph{equatorial plane} of $\mathcal{G}$.
\item The horizontal line
\[
  z=h,\qquad a x - b y = c,
\]
is the \emph{axis} of $\mathcal{G}$; it lies in the equatorial plane and is disjoint from $\mathcal{G}$.
\item The equatorial plane intersects $\mathcal{G}$ along the two horizontal lines
\[
  z=h,\qquad a x - b y = c \pm \sqrt{1 - 2|ab|} = c \pm \sqrt{ \tfrac{2k^2}{1+k^{2}}}
\]
We call them the \emph{inflection lines}. They consist of the inflection points of the geodesics (i.e. $\ddot z=0$).
They are the point where $|\dot z|$ attains its maximal value. 
The inflection lines are  horizontal geodesics.
\item The horizontal lines
\[
  a x - b y = c,\qquad    z - h = \pm A(k)  
\]
are the \emph{critical lines}: they consist of the critical points of the geodesics contained in $\mathcal{G}$  (i.e. $\dot z=0$). These lines are \emph{not} geodesics of $\SOL$.
\end{enumerate}
\end{definition}

We conclude with two simple observations: 

\begin{remark}\rm
\begin{enumerate}[(i)]
\item $\mathcal{G}$ contains  the origin $0$ if and only if  $a^{2}+b^{2}+c^{2}=1$.
\item The axis of $\mathcal{G}$  passes through $0$ if and only if $c = 0$ and $a=b$ (so $h=0$).
\item Any isometry of $\SOL$ that preserves the axis also leaves $\mathcal{G}$ invariant.
\end{enumerate}
\end{remark}

\subsection{An application: the modulus as a complete invariant}

As a first application of the properties of the Grayson cylinder, we prove the following result, which completes Lemma~\ref{kisinvariant1}:

\begin{proposition}\label{kisinvariant2}
Two generic geodesics in $\mathrm{SOL}$ are geometrically equivalent if and only if they have the same modulus; equivalently, they have the same amplitude $A(k)=\operatorname{arctanh}(k)$. 
\end{proposition}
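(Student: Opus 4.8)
The plan is to prove the two implications separately. One direction is already done: Lemma~\ref{kisinvariant1} gives that geometrically equivalent geodesics share the same modulus. So the work lies in the converse, and the natural strategy is to produce, for every admissible modulus $k\in(0,1)$, a canonical normal form, and to show that any generic geodesic with that modulus is geometrically equivalent to it. Concretely, I would fix $k\in(0,1)$, set $|ab|=\tfrac12\cdot\tfrac{1-k^{2}}{1+k^{2}}$, and single out the ``symmetric'' representative with $a=b>0$, $c=0$, i.e.\ the one whose Grayson cylinder $\mathcal G_{a,a,0}$ has equatorial plane $\{z=0\}$ and axis through the origin along the line $z=0$, $x=y$. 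Call this geodesic $\gamma_{k}$ (pinned down further by requiring, say, $z(0)=A(k)$, $\dot z(0)=0$, and a fixed choice of sign for the horizontal velocity at that critical point). The goal is then: every generic geodesic of modulus $k$ is geometrically equivalent to $\gamma_{k}$.

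The key steps, in order: \emph{(1)} Given an arbitrary generic unit-speed geodesic $\gamma$ with constants $a,b,c$ and modulus $k$, use the coordinate sign changes $(x,y,z)\mapsto(\varepsilon_1 x,\varepsilon_2 y,z)$ to arrange $a,b>0$ (this is exactly the remark after \eqref{eq:hkab-summary}), and if necessary the involution $(x,y,z)\mapsto(y,x,-z)$ to keep things consistent; \emph{(2)} apply a vertical lift $\Phi_\lambda$ with $\lambda=-h=-\tfrac12\log|b/a|$ to move the average height to $0$, which replaces $(a,b)$ by $(\ee^{-h}a,\ee^{h}b)=(\sqrt{|ab|},\sqrt{|ab|})$, so now $a=b$; \emph{(3)} apply a horizontal translation $(x,y,z)\mapsto(x+w_1,y+w_2,z)$ to kill the constant $c$ — since $c$ transforms by $c\mapsto c+aw_1-bw_2$ and $a=b\neq0$, one can choose $w_1,w_2$ to make the new $c$ equal $0$ (this is where the Grayson-cylinder viewpoint is clarifying: we are sliding $\mathcal G_{a,a,c}$ horizontally onto $\mathcal G_{a,a,0}$); \emph{(4)} after these reductions $\gamma$ lies on the same cylinder $\mathcal G_{a,a,0}$ as $\gamma_{k}$ and has the same constants of motion; finally reparametrize in $t$ and use a remaining discrete isometry (a sign change and possibly $t\mapsto -t$) to match initial conditions, invoking uniqueness of geodesics with given initial position and velocity to conclude $\gamma$ coincides with $\gamma_{k}$ up to the isometry and reparametrization already applied. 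The equivalence with ``same amplitude'' is immediate from $A(k)=\operatorname{arctanh}(k)$ being a strictly increasing bijection $[0,1)\to[0,\infty)$, as recorded in \eqref{eq.amplitude}.

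The main obstacle, I expect, is Step~\emph{(4)}: after normalizing $a=b$, $c=0$, $h=0$, I must be sure that the triple $(a,b,c)$ together with one initial condition (position and velocity at a critical point, plus a sign) really pins the geodesic down uniquely up to the residual $D_4$-isotropy. Here one has to check that the remaining freedom — which horizontal inflection line the geodesic crosses, the sign of $\dot z$, the direction of horizontal drift — is fully absorbed by the order-$8$ isotropy group fixing the origin (coordinate sign changes and the involution $(x,y,z)\mapsto(y,x,-z)$), so that there is genuinely \emph{one} orbit. A clean way to organize this is to note that the vector field $\xi$ on $\mathcal G_{a,a,0}$ (the geodesic spray restricted to the cylinder, from the introduction) has all its integral curves conjugate under the isometries preserving the cylinder and the axis; since any point of $\mathcal G_{a,a,0}$ can be moved to a prescribed point of a critical line by such an isometry, and the velocity there is then determined up to the finite group, uniqueness of geodesics finishes the argument. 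I would also double-check the degenerate-looking bookkeeping in Step~\emph{(2)}–\emph{(3)} when the original $c$ is already zero or $h$ already zero, but that is routine.
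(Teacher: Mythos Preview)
Your proposal is correct and follows essentially the same approach as the paper: both reduce an arbitrary generic geodesic to a canonical form via a vertical lift (forcing $h=0$, hence $|a|=|b|$), a horizontal translation (forcing $c=0$), coordinate reflections (arranging $a=b>0$), and a time-shift to place the initial point on a critical line, after which the initial data are determined by $k$ alone. The paper orders the steps slightly differently and makes the final translation along the axis (to achieve $x(0)=y(0)=0$) an explicit separate step rather than absorbing it into the residual-isotropy discussion, but the content is the same; note a harmless sign slip in your step~(2): with $\lambda=-h$ the lift sends $(a,b)$ to $(e^{h}a,\,e^{-h}b)$, not $(e^{-h}a,\,e^{h}b)$, though the conclusion $a=b=\sqrt{|ab|}$ is unaffected.
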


Recall that two complete geodesics are \emph{geometrically equivalent} if they differ by an isometry of $\SOL$ and/or a reparametrization.

\begin{proof}
Let $\gamma$ be a generic geodesic with modulus $k$ and Grayson cylinder $\mathcal{G}$.
We perform the following normalizations using isometries and a reparameterization:

\smallskip\quad
(i) Move $\mathcal{G}$ by a vertical lift so that its equatorial plane becomes the ground plane $\Pi_0=\{z=0\}$; equivalently, $h=0$, hence $|a|=|b|$.

\smallskip\quad
(ii) Horizontally translate $\mathcal{G}$  so that its axis  passes through the origin. We then have $c=0$.

\smallskip\quad
(iii) If necessary, reflect in the coordinate planes to arrange $a=b>0$.

\smallskip\quad
(iv) Normalize the speed and the initial phase so that $\|\dot{\gamma}\|=1$ and $\gamma$ starts at a critical point:
\[
z(0)=z_{\max}=A(k),   \qquad \dot z(0)=0.
\]

\smallskip\quad
(v) Translate   $\mathcal{G}$ {along its axis} if necessary,  so that $x(0) = y(0) = 0$. 

\smallskip
After these normalizations we have
\[
\gamma(0)=(0,0,A(k)),\qquad
\dot\gamma(0)=\bigl(a\,\ee^{2A(k)},\,a\,\ee^{-2A(k)},\,0\bigr),
\]
with $a=b>0$ uniquely determined by
\[
2a^2=\frac{1-k^2}{1+k^2}.
\]
The normalized model geodesic is uniquely determined by its modulus $k$, thus two generic geodesics with the same modulus are geometrically equivalent. The converse implication is Lemma~\ref{kisinvariant1}.
\end{proof}

\medskip

\subsection{Framing the Grayson cylinders}

Given $a,b,c \in \R$ such that  $0<2 |ab| < 1$, we introduce the following global vector fields $\xi$ and $\eta$ on $\SOL$ defined by 
\begin{eqnarray*}
\xi  &=& \xi_{a,b,c}  = a \ee^{2z}\,\frac{\partial}{\partial x} + b \ee^{-2z}\,\frac{\partial}{\partial y}
      + (c-ax+by)\,\frac{\partial}{\partial z} \\[1ex]    
&=&   a \ee^{z} X + b \ee^{-z} Y + (c-ax+by) Z, \\[2ex]      
\eta &=& \eta_{a,b}   =  b\,\frac{\partial}{\partial x}  + a\,\frac{\partial}{\partial y}   = b \ee^{-z} X + a \ee^{z} Y,
\end{eqnarray*}
where $ \{X, Y, Z\} $ is the left-invariant orthonormal frame defined in~\eqref{FrameXYZ}.

\medskip

We collect the basic properties of these vector fields : 

\begin{proposition}\label{prop:xi-eta}
\begin{enumerate}[(i)]
\item For every point of the Grayson cylinder $\mathcal G_{a,b,c}$ the vectors
      $\xi$ and $\eta$ are tangent to~$\mathcal G_{a,b,c}$.
\item Any integral curve of $\xi$ through a point in $\mathcal G_{a,b,c}$ is a geodesic
      of $\SOL$. In particular it is globally contained in  $\mathcal G_{a,b,c}$.
\item The integral curves of $\eta$ are horizontal lines contained  in~$\mathcal G_{a,b,c}$ and  parallel to the cylinder’s axis.
      They are geodesic precisely along the two inflection lines.
\item The flows generated by $\xi$ and $\eta$ commute;  both leave the Grayson cylinder invariant.
\item At any point of the Grayson cylinder we have 
$$
     2|ab|\ \le\ |\cos(\theta)|\ \le\ \sqrt{2|ab|}\,<1,
$$
where $\theta$ is the Riemannian angle between $\xi$ and $\eta$. In particular $\theta$ is uniformly separated from $0$, $\pi/2$, and $\pi$ by  constants that depend only on $a,b$.
\item The sign of $\cos\theta$ is the sign of $ab$.  Its absolute value attains its minimum along the critical lines  $a x-b y=c$ and its maximum along the inflection lines $z=h$ of $\mathcal G_{a,b,c}$.
\item Any isometry of $\SOL$ fixing the cylinder’s  axis  and its direction preserves both $\mathcal G$ and the vector fields $\xi,\eta$.
\end{enumerate}
\end{proposition}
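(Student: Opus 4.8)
The plan is to verify the seven assertions essentially by direct computation, organised around the defining function of the cylinder and the orthonormal frame~\eqref{FrameXYZ}. Set
\[
 F(x,y,z) = (ax-by-c)^2 + 2U_{a,b}(z), \qquad \mathcal G_{a,b,c}=F^{-1}(1),
\]
and recall $U'_{a,b}(z)=a^2\ee^{2z}-b^2\ee^{-2z}$, so that $dF = 2(ax-by-c)\,(a\,dx-b\,dy) + 2U'_{a,b}(z)\,dz$. For (i) I would simply evaluate $dF$ on $\xi$ and $\eta$: on $\xi$ the $dx,dy$ part contributes $2(ax-by-c)\,(a^2\ee^{2z}-b^2\ee^{-2z})$ and the $dz$ part contributes $2(a^2\ee^{2z}-b^2\ee^{-2z})(c-ax+by)$, and these cancel; on $\eta$ one gets $2(ax-by-c)(ab-ba)=0$. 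Hence $\xi,\eta$ are tangent to $\mathcal G_{a,b,c}$. Since a vector field tangent to a submanifold has all its integral curves contained in that submanifold, the "globally contained in $\mathcal G_{a,b,c}$" clauses of (ii) and (iii) follow from (i) together with the completeness of $\SOL$.

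For (ii): an integral curve $\gamma(t)=(x(t),y(t),z(t))$ of $\xi$ satisfies $\dot x=a\ee^{2z}$, $\dot y=b\ee^{-2z}$, $\dot z=c-ax+by$; differentiating the third equation and substituting the first two gives $\ddot z=-a^2\ee^{2z}+b^2\ee^{-2z}$, which is exactly~\eqref{eq.zpp}, so by Lemma~\ref{zpp} such a curve is a geodesic (of unit speed, since $\|\xi\|^2=1$ on $\mathcal G$, see below). For (iii): the integral curves of $\eta$ are the horizontal lines $t\mapsto(x_0+bt,\,y_0+at,\,z_0)$, running in the direction $(b,a,0)$ of the axis. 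Such a line is a geodesic precisely when its own principal constants $a'=b\ee^{-2z_0}$, $b'=a\ee^{2z_0}$ satisfy the equilibrium condition $a'^2\ee^{2z_0}=b'^2\ee^{-2z_0}$ of~\eqref{eq.zpp}, i.e.\ $\ee^{4z_0}=b^2/a^2$, i.e.\ $z_0=h$; intersecting the plane $\{z=h\}$ with $\mathcal G$ yields exactly the two inflection lines.

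Items (iv)--(vi) reduce to three short computations in the frame $\{X,Y,Z\}$. First $[\xi,\eta]=0$: since $\eta$ has constant components, $[\xi,\eta]=-\,(b\,\partial_x+a\,\partial_y)\xi$, and the only component of $\xi$ depending on $x,y$ is the $Z$-component $c-ax+by$, for which $b(-a)+a(b)=0$; hence the flows commute, and both preserve $\mathcal G$ by (i). Next,
\[
 \langle\xi,\eta\rangle = ab+ab = 2ab,\qquad \|\eta\|^2 = a^2\ee^{2z}+b^2\ee^{-2z}=2U_{a,b}(z),\qquad \|\xi\|^2 = 2U_{a,b}(z)+(c-ax+by)^2,
\]
and the defining equation of $\mathcal G$ makes the last quantity equal to $1$ there; so $\cos\theta = 2ab/\sqrt{2U_{a,b}(z)}$ on $\mathcal G$. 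Since $2|ab|\le 2U_{a,b}(z)\le 1$ on $\mathcal G$ — the lower bound being the minimum $\min U_{a,b}=|ab|$, the upper bound coming from $F=1$ — we obtain $2|ab|\le|\cos\theta|\le\sqrt{2|ab|}<1$, with the minimum attained where $2U_{a,b}(z)=1$ (the critical lines $ax-by=c$) and the maximum where $z=h$ (the inflection lines), and $\sgn(\cos\theta)=\sgn(ab)$. This settles (iv), (v), (vi).

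For (vii) I would use the normalization of the proof of Proposition~\ref{kisinvariant2}: conjugating by the normalizing isometry reduces an isometry $\Phi$ that fixes the axis of $\mathcal G$ and its direction to an isometry of $\SOL$ fixing the line $\{z=0,\ x=y\}$ setwise and preserving its direction $(1,1,0)$. From the structure of $\mathrm{Isom}(\SOL)$ (horizontal translations, vertical lifts, and the order‑$8$ isotropy $D_4$), the only such isometries are the horizontal translations $(x,y,z)\mapsto(x+t,y+t,z)$ and the involution $(x,y,z)\mapsto(y,x,-z)$; for each of these the invariance of the normalized cylinder $\mathcal G_{a,a,0}$ and of the fields $\xi_{a,a,0},\eta_{a,a}$ is an immediate pushforward check (using $U_{a,a}(-z)=U_{a,a}(z)$). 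Conjugating back — and noting that the normalizing isometry carries $\xi,\eta$ to the corresponding fields of the normalized cylinder, because isometries send geodesics to geodesics and preserve the horizontal distribution — yields the claim. I expect this last point to be the only genuine obstacle: one must identify precisely which isometries fix a horizontal line together with its direction, whereas everything else is a routine identity.
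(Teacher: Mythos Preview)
Your proof is correct and follows the same route as the paper: direct verification of (i)--(vi) via the differential of the defining function, the geodesic equations of Lemma~\ref{zpp}, the bracket computation $[\xi,\eta]=0$, and the inner-product formula $\cos\theta=2ab/\|\eta\|$ on the cylinder. For (vii) the paper merely declares it ``immediate from the definitions of $\mathcal G_{a,b,c}$ and the fields $\xi,\eta$'', so your normalization-and-enumeration argument is in fact more careful than what the paper supplies; note only that the normalizing isometry may send $\xi,\eta$ to $-\xi_0,-\eta_0$ rather than $+\xi_0,+\eta_0$ (e.g.\ under a single sign change), but this sign cancels in the conjugation $\Psi^{-1}\Phi'\Psi$ and does not affect your conclusion.
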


\medskip 

\begin{proof}
{(i)}\;
The tangent space of $\mathcal G_{a,b,c}$ at $(x,y,z)$ is the kernel of the
$1$-form
\[
  \omega  =  (a x-b y-c)\,(a\,dx-b\,dy)  \;+\;\bigl(a^{2}\ee^{2z}-b^{2}\ee^{-2z}\bigr)\,dz
\]
(this is the differential of the defining function  of $\mathcal G_{a,b,c}$). The vector fields $\xi, \eta$ satisfy  $\omega(\xi)=\omega(\eta)=0$.

\medskip 

{(ii)}\;
By Lemmas~\ref{zpp} and~\ref{basicrelations}, the equations of a geodesic with constants of motion $a,b,c$  coincide with the
components of~$\xi$, hence each integral curve of~$\xi$ is a geodesic.

\medskip 

{(iii)}\;
The integral curve of $\eta$ through a point $(x_0,y_0,z_0) \in \mathcal{G}$ is the horizontal straight $t \;\longmapsto\; (x_{0}+bt,\; y_{0}+at,\; z_{0})$. It is clearly contained in $\mathcal{G}$ and parallel to the axis.
By Lemma \ref{lem.3types},  this line is a geodesic of $\SOL$ if and only if $z = h$, that is, $a\,\ee^{2z}=b\,\ee^{-2z}$. 
These two lines are the  inflection lines
$$
  z = h,  \qquad   a x - b y = c \pm \sqrt{1-2|ab|}.
$$
For every other height $z_{0}\neq h$, the horizontal line is  contained in $\mathcal G_{a,b,c}$ but is \emph{not} a geodesic.

\medskip 

{(iv)}\;
The flows of $\xi$ and $\eta$ commute because  $[\xi,\eta]= 0$. Both flows  preserve the  Grayson cylinder,  by statements (ii) and (iii). 

\medskip 

{(v)\,--\,(vi)}\;
On  $\mathcal{G}_{a,b,c}$ we have the Riemannian norms \  $\|\eta\| = \sqrt{1 - (ax - by - c)^2}$ and  $\|\xi\| = 1$.
The angle $\theta$ between $\xi$ and $\eta$ satisfies then 
$$
\cos(\theta)  = \frac{\langle\xi,\eta\rangle}{\|\xi\|\|\eta\|} = \frac{2ab}{\sqrt{1 - (ax - by - c)^2}}.
$$
This shows that the sign of $\cos\theta$ is that of $ab$. Using  Lemma \ref{boundsGrayson}, we   check the inequalities 
$$
  \sqrt{2|ab| }\leq  \|\eta\| \leq 1,
$$
therefore 
$$
   0<  {2|ab| }\leq |\cos(\theta)|\leq \sqrt{2|ab|}.
$$
 Equality on the left occurs when $\|\eta\|=1$, i.e. along the critical lines $a x-b y=c$. Equality on the right occurs when $\|\eta\|=\sqrt{2|ab|}$, i.e. along the inflection lines $z=h$.
 
 \medskip
 
The last statement (vii) is immediate from the definitions of $\mathcal{G} _{a,b,c}$ and the fields $\xi$, $ \eta$.
\end{proof}

\medskip

We conclude this section with a comment on time-reversal symmetry: 

\begin{remark} \rm 
\begin{enumerate}[(i)]
\item Given a generic  geodesic $\gamma$, the symmetric geodesic $\gamma^-$, defined by  $\gamma^{-}(t):=\gamma(-t)$, generate the same Grayson cylinder. Indeed,  the constants of motions $a,b,c$ of  $\gamma^{-}(t)$ coincide with those of $\gamma$ up to sign and clearly 
$$
 \mathcal{G}_{-a,-b,-c}  =\mathcal{G}_{a,b,c}.
$$
The corresponding vector fields then coincide up to sign: 
$$
  \xi^-  = -\xi,  \qquad \eta^-  =-\eta.
$$
These relations reflect the  time reversal $t\mapsto -t$ on the geodesic.
\end{enumerate}
\end{remark}

\subsection{Winding around the axis}\label{sec.winding}

We continue the discussion of geodesics from Section~\ref{sec.discussZ}. In this section we prove that a
generic geodesic winds around its associated Grayson cylinder with a constant horizontal drift after each
period. The following result is essentially due to Grayson, although not stated in this form (see pages 68-75, and the figure page 84, in \cite{Grayson}).

\begin{theorem}\label{th:spiraling}
Let \(\gamma(t)=(x(t),y(t),z(t))\) be a generic unit-speed geodesic with nonzero constants of motion $a,b$. 
Denote by $h$ its average height and by $k$ its modulus, and define
\begin{equation}\label{defMk}
   M(k) =  4\int_{0}^{A(k)} \frac{\cosh(2\zeta)\,d\zeta} {\sqrt{1-2|ab|\,\cosh(2\zeta)}},
\end{equation}
where $A(k)$ is the amplitude. 
Then 
\begin{enumerate}[(A)]
\item For every \(t_{0}\in\R\) we have $z(t_{0}+T(k))=z(t_{0})$ and 
\begin{equation}\label{eq.sw1}
   \gamma(t_{0}+T(k))-\gamma(t_{0}) = \sgn(ab) M(k)\,(b,\, a,\, 0).
\end{equation}
\item 
The horizontal distance between these points is equal to 
\begin{equation}\label{eq.sw2}
  \delta\bigl(\gamma(t_{0}),\gamma(t_{0}+T(k))\bigr) =  M(k)\,\sqrt{2|ab|\,\cosh\bigl(2(z(t_{0})-h)\bigr)}.
\end{equation}
\end{enumerate}
\end{theorem}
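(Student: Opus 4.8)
The plan is to integrate the horizontal equations of motion over one vertical period and show that the increments depend only on $k$ (up to the fixed direction $(b,a,0)$ and the sign of $ab$). First I would recall from Lemma~\ref{zpp} that $\dot x = a\,\ee^{2z}$ and $\dot y = b\,\ee^{-2z}$, so that
\[
   x(t_{0}+T)-x(t_{0}) = \int_{t_0}^{t_0+T} a\,\ee^{2z(t)}\,dt,
   \qquad
   y(t_{0}+T)-y(t_{0}) = \int_{t_0}^{t_0+T} b\,\ee^{-2z(t)}\,dt.
\]
Using $z = h + \zeta$ with $\zeta(t)$ the oscillating part, and $\ee^{2h}=|b/a|$, one has $a\,\ee^{2z} = a\,\ee^{2h}\ee^{2\zeta} = \sgn(a)|b|\ee^{2\zeta}$ and similarly $b\,\ee^{-2z} = \sgn(b)|a|\ee^{-2\zeta}$. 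Thus the $x$-increment is $\sgn(a)|b|\int \ee^{2\zeta}dt$ and the $y$-increment is $\sgn(b)|a|\int \ee^{-2\zeta}dt$. Over one full period the $\zeta$-trajectory is symmetric under $\zeta\mapsto-\zeta$ (the phase curve $\mathcal C_{a,b}$ is invariant under $z\mapsto 2h-z$, as noted after \eqref{Lagrangian3}), so $\int_{\text{period}}\ee^{2\zeta}\,dt = \int_{\text{period}}\ee^{-2\zeta}\,dt =: J$. Hence the $x$- and $y$-increments are $\sgn(a)|b|J$ and $\sgn(b)|a|J$, which is exactly $\sgn(ab)J\,(b,a,0)$ once one checks $\sgn(a)|b| = \sgn(ab)\,b$ and $\sgn(b)|a| = \sgn(ab)\,a$. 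It remains to identify $J$ with $M(k)$.

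For the identification $J = M(k)$, I would convert the time integral to a $\zeta$-integral using $|dz/dt| = \sqrt{1-2U_{a,b}(z)}$, i.e. $dt = d\zeta/\sqrt{1-2|ab|\cosh(2\zeta)}$ on each monotone half-oscillation. The combination $\tfrac12(\ee^{2\zeta}+\ee^{-2\zeta}) = \cosh(2\zeta)$ appears because, by the $\zeta\mapsto-\zeta$ symmetry, $\int_{\text{period}}\ee^{2\zeta}\,dt = \int_{\text{period}}\tfrac12(\ee^{2\zeta}+\ee^{-2\zeta})\,dt = \int_{\text{period}}\cosh(2\zeta)\,dt$; breaking the period into four congruent quarter-oscillations from $\zeta=0$ to $\zeta=A(k)$ gives
\[
   J = 4\int_{0}^{A(k)} \frac{\cosh(2\zeta)\,d\zeta}{\sqrt{1-2|ab|\cosh(2\zeta)}} = M(k),
\]
which is \eqref{defMk}. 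Since $|ab| = \tfrac12\cdot\tfrac{1-k^2}{1+k^2}$ and $A(k)=\arctanh(k)$ depend only on $k$, so does $M(k)$; this proves part~(A), and in particular the independence of $t_0$.

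For part~(B), I would simply plug the increments from (A) into the horizontal-distance formula \eqref{horzdistance} at altitude $z(t_0)$: with $\Delta x = \sgn(ab)M(k)\,b$ and $\Delta y = \sgn(ab)M(k)\,a$,
\[
   \delta^2 = \ee^{-2z_0}(\Delta x)^2 + \ee^{2z_0}(\Delta y)^2
            = M(k)^2\bigl(b^2\ee^{-2z_0} + a^2\ee^{2z_0}\bigr)
            = M(k)^2\cdot 2U_{a,b}(z_0),
\]
and $2U_{a,b}(z_0) = 2|ab|\cosh(2(z_0-h))$ by \eqref{eq:Ucosh}, giving \eqref{eq.sw2} after taking square roots. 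The main obstacle is purely bookkeeping: being careful with the signs of $a,b$ when rewriting $\sgn(a)|b|$ and $\sgn(b)|a|$ in terms of $\sgn(ab)$, and justifying the quarter-period decomposition of the integral (that the four arcs of $\mathcal C_{a,b}$ contribute equally, which follows from the two reflection symmetries $\dot z\mapsto-\dot z$ and $\zeta\mapsto-\zeta$). No genuine analytic difficulty arises beyond the change of variables already used for the period $T(k)$ in \eqref{PeriodT_general}.
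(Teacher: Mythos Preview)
Your plan is correct and follows essentially the same route as the paper: the paper isolates your integral identity $\int_{t_0}^{t_0+T}\ee^{\pm 2z(t)}\,dt = |b/a|^{\pm 1}M(k)$ as a separate lemma (proved via the half-period symmetry $\zeta(t+T/2)=-\zeta(t)$ rather than your four-quarter decomposition, but this is the same idea), then computes the increments and the horizontal distance exactly as you do. Your sign bookkeeping $\sgn(a)|b|=\sgn(ab)\,b$ and the change of variables $dt=d\zeta/\sqrt{1-2|ab|\cosh(2\zeta)}$ are handled in the same way.
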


Recall that the \emph{horizontal distance} between two points at the same altitude is defined in 
Remark \ref{remarkHorizontal}. 

\medskip

\begin{figure}[htbp]
\captionsetup[figure]{labelformat=empty}
  \centering
  \includegraphics[width=1.2\linewidth, trim=1.2cm 2.1cm 1.5cm 4.61cm, clip]{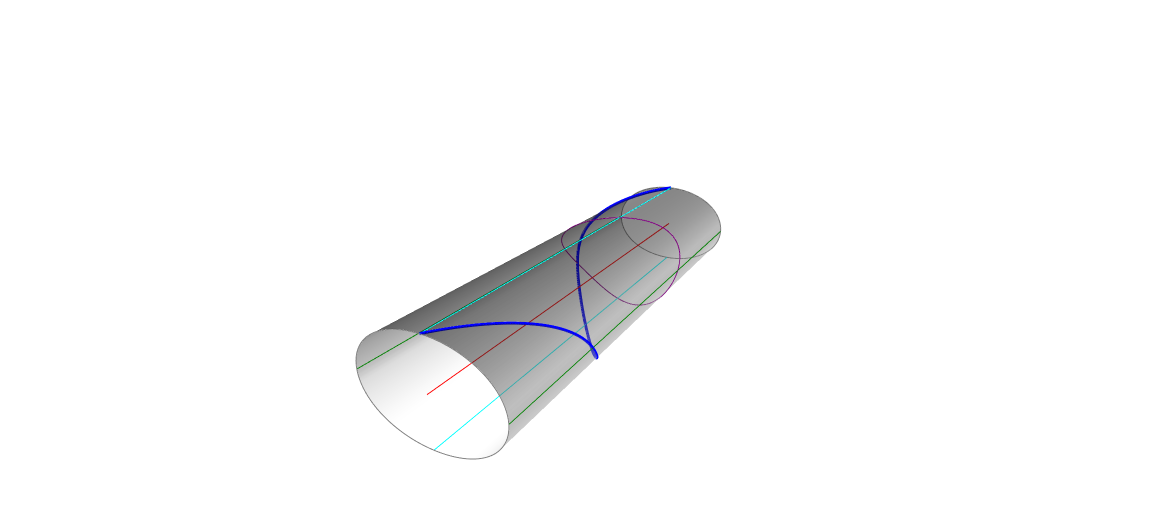}
  \caption*{A generic geodesic winding once around its Grayson Cylinder}
  \label{fig:partners}
\end{figure}

\begin{corollary}\label{HDI}
For a generic unit-speed geodesic $\gamma$,  the quantity $H$ defined by 
\begin{equation}\label{def.H}
   H :=  \sqrt{|x(t_0+T(k)) - x(t_0)|\;|y(t_0+T(k)) - y(t_0)|},
\end{equation}
is independent of $t_0$. More precisely, it only depends on the modulus of the geodesic and is given  by
\begin{equation}\label{defHDI}
  H = H(k)  = \sqrt{|ab|}\,  M(k).
\end{equation}
\end{corollary}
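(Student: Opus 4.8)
The plan is to read the result directly off part (A) of Theorem~\ref{th:spiraling}. Applying \eqref{eq.sw1} coordinate by coordinate gives, for every $t_0\in\R$,
\[
  x(t_0+T(k))-x(t_0)=\sgn(ab)\,M(k)\,b,
  \qquad
  y(t_0+T(k))-y(t_0)=\sgn(ab)\,M(k)\,a .
\]
In particular the horizontal increments do not depend on $t_0$ at all, so the quantity $H$ in \eqref{def.H} is manifestly independent of $t_0$; this settles the first assertion with no further work.

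For the explicit value I would first note that $M(k)>0$: the integrand in \eqref{defMk} is positive on the open interval $(0,A(k))$, since $2|ab|\cosh(2\zeta)<2|ab|\cosh(2A(k))=2U_{a,b}(h+A(k))=1$ for $\zeta<A(k)$ by the definition of the amplitude in \eqref{eq.amplitude}, and the singularity at $\zeta=A(k)$ is integrable. Hence $|x(t_0+T(k))-x(t_0)|=M(k)\,|b|$ and $|y(t_0+T(k))-y(t_0)|=M(k)\,|a|$, so that
\[
  H=\sqrt{M(k)\,|b|\cdot M(k)\,|a|}=M(k)\,\sqrt{|ab|},
\]
which is exactly \eqref{defHDI}. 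To justify writing $H=H(k)$, I would invoke the identities \eqref{eq:hkab-summary}: the product $|ab|$ equals $\tfrac12\cdot\tfrac{1-k^{2}}{1+k^{2}}$, a function of the modulus alone (even though $|a|$ and $|b|$ individually also involve the average height $h$), while $M(k)$ likewise depends only on $k$ once $2|ab|=\tfrac{1-k^{2}}{1+k^{2}}$ and $A(k)=\arctanh(k)$ are substituted into \eqref{defMk}. Combining, $H$ is a function of $k$ alone.

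Since Theorem~\ref{th:spiraling} already carries all the analytic content, there is essentially no obstacle here: the only points deserving a word of justification are the strict positivity of $M(k)$, so that absolute values pass through the product cleanly, and the observation that $|ab|$, unlike $|a|$ or $|b|$ separately, is determined by the modulus. As a consistency check one may instead derive the same formula from part (B): evaluating \eqref{eq.sw2} at a point of the equatorial plane, where $z(t_0)=h$, gives $\delta=M(k)\sqrt{2|ab|}$, and a short computation with the horizontal distance of Remark~\ref{remarkHorizontal} (where on $\{z=h\}$ one has $\delta^2=\ee^{-2h}(\Delta x)^2+\ee^{2h}(\Delta y)^2$, with equality in the arithmetic–geometric mean inequality precisely because $|\Delta x|/|\Delta y|=|b/a|=\ee^{2h}$ here) recovers $H=\delta/\sqrt{2}=M(k)\sqrt{|ab|}$.
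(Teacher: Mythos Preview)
Your proof is correct and follows exactly the same approach as the paper: both read the coordinate increments directly from \eqref{eq.sw1} in Theorem~\ref{th:spiraling} and multiply them to obtain $H^2=M(k)^2|ab|$. Your version is simply more detailed---you add the justification that $M(k)>0$, the observation that $|ab|$ depends only on $k$ via \eqref{eq:hkab-summary}, and a consistency check through part~(B)---whereas the paper's proof is a single line.
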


\medskip 

We call $H(k)$ defined by \eqref{defHDI}  the \emph{horizontal drift invariant} of $\gamma$. It is called the  \textit{holonomy invariant} by Coiculescu and Schwartz in \cite[page 2107]{CoiculescuSchwartz} and we borrow  here the notation $H$ from these authors. It is denoted  $D$ in \cite{Grayson} and  $p(k)$ in \cite{Troyanov98}.

\begin{remark} \rm 
(1) Equation \eqref{eq.sw2} can also be written as 
$$
  \delta\bigl(\gamma(t_{0}),\gamma(t_{0}+T)\bigr) =  
  M(k)\,\sqrt{\,a^{2}\ee^{2z(t_{0})}+b^{2}\ee^{-2z(t_{0})}\,}.
$$

(2) Since \(\cosh\bigl(2(z-h)\bigr)\ge 1\) with equality if and only if \(z=h\), we get
\[
  \delta\bigl(\gamma(t_{0}),\gamma(t_{0}+T)\bigr)\ \ge\ \sqrt{2|ab|}\,M(k),
\]
with equality precisely when \(z(t_0)=h\).

\smallskip 

(3) Although $H$ and $M$ are tightly linked by $H(k)=\sqrt{|ab|}\,M(k)$, they play different roles, and
keeping both functions is convenient. In particular, $M(k)$ expresses the horizontal drift in the direction of the axis of a generic geodesic  {(see \eqref{eq.sw1})}, while $H(k)$ is  more of a scalar invariant encoding the geometric mean of the coordinate increments in \eqref{def.H}.

\smallskip 

(4) In Sections \ref{sec.TMHelliptic} we derive explicit formulas for $M(k)$ and $H(k)$ in terms of complete elliptic integrals and in Section \ref{sec.TMHproperties} we give some important properties of these functions. 
\end{remark}

\medskip

For the proof of the theorem, we will need the following lemma, which we will prove later. 
\begin{lemma}\label{LemM}
For any \(t_{0}\in\R\), we have 
\[
\int_{t_{0}}^{t_{0}+T} \ee^{2z(t)}\,dt  =  \Bigl|\frac{b}{a}\Bigr|\,M(k),
\quad
\int_{t_{0}}^{t_{0}+T} \ee^{-2z(t)}\,dt  =  \Bigl|\frac{a}{b}\Bigr|\,M(k).
\]
\end{lemma}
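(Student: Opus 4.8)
The plan is to convert each time–integral into an integral over the height variable $z$ by means of the first integral \eqref{Lagrangian3}, and then to exploit the reflection symmetry of the potential about $z=h$. \textbf{Step 1 (reduction to an integral over $z$).} Since $z(t)$ is $T$–periodic (Section~\ref{sec.discussZ}) and $\ee^{\pm 2z(t)}$ depends only on $z(t)$, the integrals $\int_{t_0}^{t_0+T}\ee^{\pm 2z(t)}\,dt$ are independent of $t_0$, so I would choose $t_0$ with $z(t_0)=z_{\min}=h-A(k)$ and $\dot z(t_0)=0$. On $[t_0,t_0+T/2]$ the solution of \eqref{eq.zppbis} increases monotonically from $z_{\min}$ to $z_{\max}=h+A(k)$ with $\dot z=+\sqrt{1-2U_{a,b}(z)}$, hence $dt=dz/\sqrt{1-2U_{a,b}(z)}$; on $[t_0+T/2,t_0+T]$ it decreases back with $\dot z=-\sqrt{1-2U_{a,b}(z)}$, contributing the same amount once the limits are reversed. (The square-root singularities at the two turning points are integrable, exactly as in the derivation of the period formula \eqref{PeriodT0}.) Therefore
\[
  \int_{t_0}^{t_0+T}\ee^{\pm 2z(t)}\,dt
   = 2\int_{z_{\min}}^{z_{\max}} \frac{\ee^{\pm 2z}}{\sqrt{1-2U_{a,b}(z)}}\,dz .
\]

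\textbf{Step 2 (symmetrization).} Substituting $\zeta=z-h$ and using \eqref{eq:Ucosh}, the right-hand side becomes $2\,\ee^{\pm 2h}\int_{-A(k)}^{A(k)}\ee^{\pm 2\zeta}\bigl(1-2|ab|\cosh(2\zeta)\bigr)^{-1/2}\,d\zeta$. Writing $\ee^{\pm 2\zeta}=\cosh(2\zeta)\pm\sinh(2\zeta)$, the $\sinh$ term is odd in $\zeta$ while the denominator is even, so it integrates to zero over the symmetric interval, and the even $\cosh$ term doubles the integral over $[0,A(k)]$. Combining this with $\ee^{2h}=|b/a|$, $\ee^{-2h}=|a/b|$ from \eqref{eq:hkab-summary} and the definition \eqref{defMk} of $M(k)$ yields
\[
  \int_{t_0}^{t_0+T}\ee^{2z(t)}\,dt
   = \ee^{2h}\cdot 4\!\int_{0}^{A(k)}\frac{\cosh(2\zeta)\,d\zeta}{\sqrt{1-2|ab|\cosh(2\zeta)}}
   = \Bigl|\tfrac{b}{a}\Bigr|\,M(k),
\]
and the identical computation gives $\int_{t_0}^{t_0+T}\ee^{-2z(t)}\,dt = \bigl|\tfrac{a}{b}\bigr|\,M(k)$.

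There is no genuine difficulty here: the only points that deserve a word of care are the $t_0$–independence of the integrals (a direct consequence of the periodicity of $z$, so that no generality is lost in starting at a minimum of $z$) and the legitimacy of the change of variable $t\leftrightarrow z$ across the two turning points where $\dot z$ vanishes — handled precisely as in \eqref{PeriodT0}. The substantive ingredient is the odd/even splitting $\ee^{\pm2\zeta}=\cosh(2\zeta)\pm\sinh(2\zeta)$, which is what collapses the asymmetric factors $\ee^{\pm 2z}$ onto the single even kernel $\cosh(2\zeta)/\sqrt{1-2|ab|\cosh(2\zeta)}$ appearing in the definition of $M(k)$.
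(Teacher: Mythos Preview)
Your proof is correct and follows essentially the same approach as the paper: both arguments shift to $\zeta=z-h$, exploit the reflection symmetry about $z=h$ to reduce $\ee^{\pm 2\zeta}$ to $\cosh(2\zeta)$, and then change variables from $t$ to $\zeta$ via $dt=d\zeta/\sqrt{1-2|ab|\cosh(2\zeta)}$ to recover the integral defining $M(k)$. The only cosmetic difference is that the paper invokes the time-shift identity $\zeta(t+T/2)=-\zeta(t)$ to produce the $\cosh$ before changing variables, whereas you change variables first and then use the odd/even splitting $\ee^{\pm 2\zeta}=\cosh(2\zeta)\pm\sinh(2\zeta)$; these are the same symmetry viewed from the time side and the space side respectively.
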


\medskip

\begin{proof}[Proof of Theorem~\ref{th:spiraling}]
Since \(z(t)\) solves \eqref{eq.zppbis}, it is \(T\)-periodic. Moreover,
\[
\dot x = a\,\ee^{2z},\qquad \dot y = b\,\ee^{-2z},
\]
are \(T\)-periodic and uniformly bounded away from \(0\) and \(\infty\), with
\begin{equation}\label{boundsondotx}
\left|\tfrac{\dot x}{b}\right|,\ \left|\tfrac{\dot y}{a}\right|
\in \bigl[\ee^{-2A(k)},\,\ee^{2A(k)}\bigr]
= \bigl[\tfrac{1-k}{1+k},\,\tfrac{1+k}{1-k}\bigr].
\end{equation}
Thus both coordinates $x(t)$ and $y(t)$ decompose  as the sum of a linear term and a $T$-periodic function. It follows that $\gamma$ wraps around  the cylinder once every period while drifting horizontally at a constant rate.
For any \(t_{0}\), Lemma~\ref{LemM} gives
\[
\begin{aligned}
x(t_{0}+T)-x(t_{0})
&= a\int_{t_{0}}^{t_{0}+T}\ee^{2z(t)}\,dt
 = a\Bigl|\frac{b}{a}\Bigr|\,M  = \operatorname{sgn}(ab)\,b\,M,\\
y(t_{0}+T)-y(t_{0})
&= b\int_{t_{0}}^{t_{0}+T}\ee^{-2z(t)}\,dt
 = b\Bigl|\frac{a}{b}\Bigr|\,M
 = \operatorname{sgn}(ab)\,a\,M,
\end{aligned}
\]
which yields
$$
 \gamma(t_{0}+T)-\gamma(t_{0}) =\sgn(ab)\,M\,(b,a,0).
$$

\smallskip

Evaluating the horizontal distance at the common level \(z=z(t_{0})\) gives
$$
 \delta\bigl(\gamma(t_{0}),\gamma(t_{0}+T)\bigr) =   M(k) \sqrt{a^{2}\ee^{2z(t_{0})}+b^{2}\ee^{-2z(t_{0})} }
$$
and we conclude from $  a^{2}\ee^{2z(t_{0})}+b^{2}\ee^{-2z(t_{0})} = {2|ab|\,\cosh\bigl(2(z(t_{0})-h)\bigr)}$.
\end{proof}

\medskip 

\begin{proof}[Proof of Corollary \ref{HDI}]
From \eqref{eq.sw1} we have 
$$
  |x(t_0+T(k)) - x(t_0)|\;|y(t_0+T(k)) - y(t_0)| = M(k) \,  |b\, a| = H(k)^2.
$$
\end{proof}

\medskip 

We now prove Lemma \ref{LemM}:
\begin{proof}[Proof of the Lemma \ref{LemM}.]
Let us set $\zeta = z-h$. Then 
$$
  U_{a,b}(z)   =   |ab| \cosh(2(z-h))  = |ab| \cosh(2\zeta),  \quad \text{and} \quad  \ee^{2z} = \left|\frac{b}{a}\right| \ee^{2\zeta}.
$$
Using the symmetry \(\zeta(t+T/2)=-\zeta(t)\) we have 
$$
 \int_{t_0}^{t_0+T/2} \ee^{2\zeta(t+T/2)} dt =  \int_{t_0}^{t_0+T/2} \ee^{-2\zeta(t)}\,dt, 
$$
hence
\begin{align*}
\int_{t_0}^{t_0+T} \ee^{2\zeta(t)}\,dt  &= \int_{t_0}^{t_0+T/2}\!\!\bigl(\ee^{2\zeta(t)}+\ee^{-2\zeta(t)}\bigr)\,dt
\\&= 2\int_{t_0}^{t_0+T/2}\!\cosh(2\zeta(t))\,dt.
\end{align*}
Choosing now $t_0$ such that $z(t_0) = z_{min}$, we have $\zeta(T_0) = -A$ (the amplitude).
Also we have $\dot \zeta = \dot z \geq 0$ on the interval $[0,T/2]$, therefore 
$$
  dt =\frac{dz}{\sqrt{1-2U_{a,b}(z)}} = \frac{d\zeta}{\sqrt{1-2 |ab| \cosh (2\zeta) }},
$$
and we have thus
$$
\int_{t_0}^{t_0+T/2}\!\cosh(2\zeta(t))\,dt
= \int_{-A}^{A}\frac{\cosh(2\zeta)}{\sqrt{1-2|ab|\cosh(2\zeta)}}\,d\zeta.
$$
Therefore
\[
\int_{t_0}^{t_0+T} \ee^{2\zeta(t)}\,dt
= 4\int_{0}^{A}\frac{\cosh(2\zeta)}{\sqrt{1-2|ab|\cosh(2\zeta)}}\,d\zeta
= M,
\]
and finally
$$
\int_{t_0}^{t_0+T} \ee^{2\,z(t)}  dt =  \left| \frac{b}{a}\right|  \int_{t_0}^{t_0+T} \ee^{2\zeta(t)}\,dt
= \left| \frac{b}{a}\right| M.
$$
A similar argument proves that 
$$
\int_{t_0}^{t_0+T} \ee^{-2\,z(t)}  dt =  \left| \frac{a}{a}\right| M.
$$
\end{proof}

\medskip

The functions \(M(k)\) and \(H(k)\) are expressed in terms of elliptic integrals in Section~\ref{sec.TMHelliptic}. Their basic properties are developed in Section~\ref{sec.TMHproperties}.

\subsection{A geodesic rendez-vous }
\label{sec.rendezvous}

In this section, we prove that a generic geodesic segment of modulus $k$ cannot be minimizing if its length exceeds $T(k)$. Given such a segment, either its endpoints are conjugate points, or one can construct another geodesic segment of the same length joining the same endpoints.We formulate this in the next theorem, which is due to Coiculescu--Schwartz, \cite[Theorem 2.3 and Corollary 2.5]{CoiculescuSchwartz}. A special case was proved by Grayson in  \cite{Grayson}. 

\begin{theorem}\label{thm:rendezvous}
Let $\gamma(t)=(x(t),y(t),z(t))$ be a generic unit-speed geodesic in $\SOL$ with modulus $k\in (0,1)$ and period $T=T(k)$, and fix $t_1\in\mathbb R$. Then:
\begin{enumerate}[(i)]
\item If $\dot z(t_1)=0$, i.e. $t_1$ is a critical point of $\gamma$, then $\gamma(t_1)$ and $\gamma(t_1+T)$ are conjugate along $\gamma$.
\item   If $\dot z(t_1)\neq 0$, there exist two distinct geodesic arcs from $\gamma(t_1)$ to $\gamma(t_1+T)$, both having the same modulus $k$ and the same length, equal to $T$.
\end{enumerate}
\end{theorem}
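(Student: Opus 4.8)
The plan is to establish part (ii) by producing an explicit \emph{partner geodesic} $\gamma^{\star}$ that shares the two endpoints $\gamma(t_1)$ and $\gamma(t_1+T)$, and then to deduce part (i) by letting this construction degenerate into a nontrivial Jacobi field. Being conjugate, having modulus $k$, and the number of connecting geodesic arcs are all unaffected by a reparametrisation, so we may assume $t_1=0$; write $z_0:=z(0)$ and recall from Lemma~\ref{zpp} that $\dot\gamma(0)=a\ee^{z_0}X+b\ee^{-z_0}Y+\dot z(0)\,Z$ in the frame \eqref{FrameXYZ}.

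\emph{Part (ii): the partner geodesic.} Suppose $\dot z(0)\neq 0$ and let $\gamma^{\star}$ be the unit-speed geodesic with $\gamma^{\star}(0)=\gamma(0)$ and $\dot\gamma^{\star}(0)=a\ee^{z_0}X+b\ee^{-z_0}Y-\dot z(0)\,Z$, that is, the same horizontal velocity but opposite vertical velocity. Its principal constants of motion are still $a$ and $b$, so $\gamma^{\star}$ has the same modulus $k$ and the same period $T=T(k)$, and its vertical coordinate $z^{\star}$ solves the same autonomous equation \eqref{eq.zppbis} with $z^{\star}(0)=z_0$, $\dot z^{\star}(0)=-\dot z(0)$; by uniqueness $z^{\star}(t)=z(-t)$. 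Using $\dot x^{\star}=a\ee^{2z^{\star}}$, the substitution $\tau=-t$, and the $T$-periodicity of $z$,
\[
  x^{\star}(T)-x^{\star}(0)=a\int_0^T\ee^{2z(-t)}\,dt=a\int_{-T}^{0}\ee^{2z(\tau)}\,d\tau=a\int_0^T\ee^{2z(\tau)}\,d\tau=x(T)-x(0),
\]
and identically $y^{\star}(T)-y^{\star}(0)=y(T)-y(0)$, while $z^{\star}(T)=z(-T)=z(0)=z(T)$. Hence $\gamma^{\star}(T)=\gamma(T)$, so $\gamma|_{[0,T]}$ and $\gamma^{\star}|_{[0,T]}$ are two unit-speed geodesic arcs of length $T$ and modulus $k$ from $\gamma(0)$ to $\gamma(T)$; they are distinct because $\dot\gamma^{\star}(0)\neq\dot\gamma(0)$ (here $\dot z(0)\neq 0$ is used), and since $\dot x=a\ee^{2z}$ has constant sign each of them is an injective curve, so they have distinct images as well.

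\emph{Part (i): degeneration to a Jacobi field.} When $\dot z(0)=0$ the partner coincides with $\gamma$, so instead we exhibit a Jacobi field along $\gamma|_{[0,T]}$ vanishing at both ends. As $\dot z(0)=0$, the vector $v_0:=\dot\gamma(0)$ is horizontal; form the smooth family of unit-speed geodesics $\gamma_s$ with $\gamma_s(0)=\gamma(0)$ and $\dot\gamma_s(0)=\cos(s)\,v_0+\sin(s)\,Z$. Then $J:=\partial_s\gamma_s\big|_{s=0}$ is a Jacobi field along $\gamma$ with $J(0)=0$ and nonzero initial covariant derivative $Z$, so $J\not\equiv 0$. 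A direct computation gives the constants of $\gamma_s$ as $a_s=a\cos s$, $b_s=b\cos s$, so all $\gamma_s$ have equatorial height $h$ and modulus $k_s$ with $\tfrac{1-k_s^{2}}{1+k_s^{2}}=\cos^{2}(s)\tfrac{1-k^{2}}{1+k^{2}}$, whence $k_s'(0)=0$ and $T(k_s)=T+O(s^{2})$. Now $\gamma_{-s}$ is precisely the partner of $\gamma_s$ based at $\gamma(0)$, so part (ii), applied to $\gamma_s$ in place of $\gamma$, yields $\gamma_s(T(k_s))=\gamma_{-s}(T(k_s))$. The coordinate velocities of $\gamma_s$ and of $\gamma_{-s}$ are $T(k_s)$-periodic and, at $t=0$, differ by $\dot\gamma_s(0)-\dot\gamma_{-s}(0)=2\sin(s)\,Z=O(s)$; expanding $\gamma_{\pm s}(T)-\gamma_{\pm s}(T(k_s))$ to first order over the interval of length $|T-T(k_s)|=O(s^{2})$ then gives $\gamma_s(T)-\gamma_{-s}(T)=O(s^{3})$. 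Since $s\mapsto\gamma_s(T)-\gamma_{-s}(T)$ is smooth, $\R^{3}$-valued and $O(s^{3})$, its derivative at $s=0$, which equals $2J(T)$, vanishes. Hence $J(T)=0$, so $J$ is a nontrivial Jacobi field along $\gamma|_{[0,T]}$ vanishing at both endpoints, i.e. $\gamma(t_1)=\gamma(0)$ and $\gamma(t_1+T)=\gamma(T)$ are conjugate along $\gamma$.

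The delicate point, and the expected main obstacle, is the expansion in part (i): one must make precise that the coordinate velocities of $\gamma_s$ are $T(k_s)$-periodic, that $T(k_s)-T=O(s^{2})$ (which rests on $k_s'(0)=0$, hence on the smoothness near $k$ of the elliptic-integral formula for $T(k)$ recalled in Section~\ref{sec.TMHelliptic}), and that the velocity gap between $\gamma_s$ and $\gamma_{-s}$ at the end of a period is only $O(s)$, so that the ``overshoot'' past $t=T(k_s)$ contributes $O(s^{3})$ rather than $O(s)$. Granting these, $2J(T)$ is the derivative at $0$ of an $O(s^{3})$ function, hence $0$. The remaining ingredients are standard: smooth dependence of $\gamma_s$ on $s$, the identification of the initial covariant derivative of $J$ with $Z$ (legitimate because the $\gamma_s$ all emanate from the common point $\gamma(0)$, so $s\mapsto\dot\gamma_s(0)$ is a curve in a single tangent space), and the elementary fact that a Jacobi field with $J(0)=0$ and nonzero initial covariant derivative cannot vanish identically.
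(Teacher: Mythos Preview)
Your proof is correct. Part~(ii) follows the same line as the paper's: both construct the partner geodesic $\gamma^{\star}$ characterized by $\gamma^{\star}(t_1)=\gamma(t_1)$ and $\dot\gamma^{\star}(t_1)=(\dot\gamma(t_1))^{\dagger}$, and both verify the rendez-vous at time $T$ using the $T$-periodicity of $z$. Your change-of-variables computation for $x^{\star}(T)-x^{\star}(0)$ is a clean substitute for the paper's appeal to the drift formula (Theorem~\ref{th:spiraling}).

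Part~(i) is where the approaches diverge. You vary the \emph{initial direction} by rotating in the $(v_0,Z)$-plane, which changes the constants of motion to $(a\cos s,b\cos s)$; consequently the modulus and period become $s$-dependent, and you need the second-order estimate $T(k_s)-T=O(s^{2})$ (from $k_s'(0)=0$) together with the observation that $\gamma_{-s}$ is the partner of $\gamma_s$ to conclude $\gamma_s(T)-\gamma_{-s}(T)=O(s^{3})$. The argument is sound, and your careful accounting of why the overshoot contributes only $O(s^{3})$ is exactly the point that needs attention. The paper instead varies the \emph{base time}: it applies the explicit partner formula~\eqref{def.partner} with $t_1$ replaced by $t_1+s$, obtaining a family of geodesics all with the \emph{same} constants $a,b$ and hence the same period~$T$. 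This yields a closed-form Jacobi field
\[
  J(t)=\bigl(2\dot x(t_1)-2\dot x(2t_1-t),\ 2\dot y(t_1)-2\dot y(2t_1-t),\ 2\dot z(2t_1-t)\bigr),
\]
whose vanishing at $t_1$ and $t_1+T$ is read off by inspection from $\dot z(t_1)=0$ and periodicity. The paper's route thus bypasses all asymptotics and the smoothness of $T(k)$; yours has the conceptual merit of obtaining~(i) as a genuine degeneration of~(ii), at the cost of a more delicate estimate.
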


\begin{proof}
We divide the proof into five steps. Let $\gamma$ be an arbitrary geodesic in $\SOL$ with principal constants of motion $a,b$, and fix $t_1\in\R$.

\medskip
\textit{Step 1}   Define a new curve $\gamma^{\star}(t):= (x^*(t),y^*(t),z^*(t)) $ by
\begin{equation}\label{def.partner}
\begin{cases}
  x^*(t)=2x(t_1)-x(2t_1-t),\\
  y^*(t)=2y(t_1)-y(2t_1-t),\\
  z^*(t)=z(2t_1-t).
\end{cases}
\end{equation}
We prove that $\gamma^{\star}$ satisfies the geodesic equations (Lemma \ref{zpp}): 
Let $u(t):=2t_1-t$, then  $\dot u(t)=-1$ and 
\[
 \dot x^*(t)=-\frac{d}{dt}\left( x(u(t))\right) =+\dot x(u(t))=a\,\ee^{2z(u(t))}=a\,\ee^{2z^*(t)},
\]
and similarly $\ \dot y^*(t)=b\,\ee^{-2z^*(t)}$. Moreover  
$$\dot z^*(t)=\dot z(u(t))\cdot \dot u(t)=-\dot z(u(t)), $$
therefore 
\[
\ddot z^*(t)=-\ddot z(u(t))\cdot \dot u(t) =\ddot z(u(t))=-a^{2}\ee^{2z^*(t)}+b^{2}\ee^{-2z^*(t)}.
\]
Hence   $\gamma^{\star}$ is also geodesic; with the same principal constants of motion  $a$ and $b$.

\medskip

\textit{Step 2.}  At $t=t_1$,  we have 
\begin{equation} \label{def.partner2}
  \gamma^{\star}(t_1)=\gamma(t_1) \quad  \text{and}  \quad 
 \dot\gamma^{\star}(t_1) =   (\dot\gamma(t_1)^{\dagger} = (\dot x(t_1),\dot y(t_1),-\dot z(t_1))
\end{equation}
(recall that ${\dagger}$ flips the $z$–component of a tangent vector). 
 In particular  $\|\dot\gamma^{\star}(t_1)\|= \|\dot\gamma(t_1)\|$, and since geodesics have constant speed, it follows that $\|\dot\gamma^{\star}(t)\|=\|\dot\gamma(t)\|$ for all $t\in \R$.

\medskip 

{Henceforth} we assume that $\gamma$ is a unit-speed \emph{generic} geodesic; then $\gamma^{\star}$ is also unit speed and generic. Moreover $\gamma$ and $\gamma^{\star}$ have the same average height $h$ and the same modulus $k$.

\medskip 

\textit{Step 3.} Applying  Theorem~\ref{th:spiraling}, we have 
$$
  \gamma^{\star}(t_1+T) = \gamma^{\star}(t_1) + \sgn(ab)\,M(k)\,(b,a,0)
  = \gamma(t_1) + \sgn(ab)\,M(k)\,(b,a,0) = \gamma(t_1+T),
$$
 If $\dot z(t_1)\neq 0$, then $\dot\gamma^{\star}(t_1)\neq\dot\gamma(t_1)$ by \eqref{def.partner2};
hence the arcs $\gamma|_{[t_1,t_1+T]}$ and $\gamma^{\star}|_{[t_1,t_1+T]}$ are distinct geodesic arcs.  
The proof of (ii) is complete.

\medskip

\textit{Step 4.} Assume from now on   $\dot z(t_1) = 0$, then $\gamma$ and $\gamma^{\star}$  satisfy the same initial data by \eqref{def.partner2}, therefore  $\gamma(t) = \gamma^{\star}(t)$ for all $t$. 

To deal with this situation, we slightly shift the time $t_1$ by a small number $s$ and let $s\to 0$.
More explicitly:  let us define the one parameter family of geodesics 
\[
 \psi(t,s) = \bigl(2x(t_1+s)-x(2(t_1+s)-t),\ 2y(t_1+s)-y(2(t_1+s)-t),\ z(2(t_1+s)-t)\bigr).
\]
Observe that 
$
 \psi(t,0) =  \gamma^{\star}(t) = \gamma(t) 
$
for all $t$, so 
\[
 J(t)=\left.\frac{\partial}{\partial s}\right|_{s=0}\psi(t,s)
\]
is a Jacobi field along $\gamma$. A direct computation gives, for all $t$,
\[
  J(t)=\bigl(2\dot x(t_1)-2\dot x(2t_1-t),\ \ 2\dot y(t_1)-2\dot y(2t_1-t),\ \ 2\dot z(2t_1-t)\bigr).
\]
In particular $J(t)$ is a non trivial Jacobi field since its last component $2\dot z(2t_1-t)$ is non constant. 

\medskip

\textit{Step 5.}   We claim that the Jacobi Field $J$ vanishes at $t=t_1$ and $t = t_1+T$. 
Indeed, we have at $t = t_1$
$$
  J(t_1)=(0,0,2\dot z(t_1))=0,
$$
since we are assuming $\dot z(t_1) = 0$.  Similarly,  $J(t+T) = 0$ by periodicity. 
We have constructed a non trivial jacobi field along $\gamma$ that vanishes at $\gamma(t_1)$ and $\gamma(t_1+T)$, proving (i). 
\end{proof}

\medskip 

\begin{definition} \rm 
Following \cite{CoiculescuSchwartz}, the geodesic $\gamma^{\star}$ constructed in the  above proof (i.e. by \eqref{def.partner})
is called the \emph{partner of $\gamma$ at time $t_1$}.\index{geodesic!partner} 
It can be defined either by the explicit formula \eqref{def.partner} or equivalently by the initial condition \eqref{def.partner2}.
\end{definition}

\begin{center}
\begin{figure}[htbp]
\captionsetup[figure]{labelformat=empty}
  \centering
  \includegraphics[width=0.4\linewidth, trim=1.2cm 4.5cm 1.5cm 4.61cm, clip]{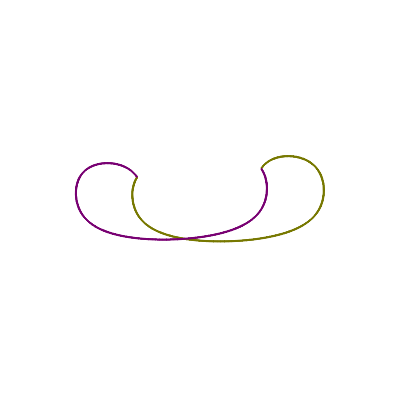}
  \caption*{A generic geodesic segment and its partner over one period.}
  \label{fig:partners}
\end{figure}
\end{center}

\subsubsection{A geometric viewpoint on the rendez-vous}

The partner geodesic $\gamma^\star$ is defined from $\gamma$ either by the explicit formulas \eqref{def.partner} or, equivalently, by solving the geodesic equations after replacing the initial velocity by its vertical reflection. 
It might therefore seem quite accidental that $\gamma$ and $\gamma^\star$ meet a second time—especially at the prescribed time $T(k)$.  The following geometric argument, based on the geometry of Grayson cylinders, explains this recurrence.

\medskip

Grayson cylinders are determined by three parameters $a$, $b$, and $c$. 
However, the parameter $c$ plays a geometric role that is quite different from that of the main parameters $a$ and $b$.
For definiteness assume $a>0$ and $b>0$. By \eqref{eq:hkab-summary}, the pair $a,b$ determines the modulus $k\in(0,1)$ and the average height $h$, and conversely.

In particular, $k$ and $h$ do not depend on $c$ and  all Grayson cylinders with the same principal parameters $a,b$ share the same equatorial plane $\{z=h\}$ and the same amplitude $A(k)$. Now we have the following 

\newpage 

\begin{lemma}
Fix $a>0$ and $b>0$, and let $p=(x,y,z)\in\SOL$. Then:
\begin{enumerate}[(i)]
\item If $|z-h|>A(k)$, there is no Grayson cylinder with principal parameters $a,b$ passing through $p$.
\item If $|z-h|<A(k)$, there are exactly two distinct Grayson cylinders with principal parameters $a,b$ passing through $p$.
\item If $|z-h|=A(k)$, there is exactly one such cylinder through $p$, and $p$ is a critical point  on that cylinder.
\end{enumerate}
\end{lemma}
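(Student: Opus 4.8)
The plan is to reduce the entire statement to counting the real roots of a single quadratic in the parameter $c$. Recall that for fixed $a,b>0$ (with $0<2ab<1$, as required for a Grayson cylinder) a point $p=(x,y,z)$ lies on $\mathcal G_{a,b,c}$ if and only if
\[
  (ax-by-c)^2 \;=\; 1-2U_{a,b}(z).
\]
Since $a,b>0$, the identity \eqref{eq:Ucosh} gives $2U_{a,b}(z)=2ab\cosh\bigl(2(z-h)\bigr)$ with $h=\tfrac12\log(b/a)$ depending only on $a,b$, so the right-hand side equals $1-2ab\cosh\bigl(2(z-h)\bigr)$; by the definition \eqref{eq.amplitude} of the amplitude, this quantity is strictly positive, zero, or strictly negative according as $|z-h|<A(k)$, $|z-h|=A(k)$, or $|z-h|>A(k)$. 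So the first step is simply to record this trichotomy.

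The counting is then immediate. In case (i) the right-hand side is negative, so there is no real $c$ and no cylinder through $p$. In case (iii) it vanishes, forcing the unique value $c=ax-by$. In case (ii) it is positive, giving exactly the two values $c=ax-by\pm\sqrt{\,1-2U_{a,b}(z)\,}$. To finish (ii) and (iii) I must check that distinct values of $c$ yield distinct cylinders for the fixed $a,b$: this follows from the observation that on $\mathcal G_{a,b,c}$ the affine function $ax-by$ takes precisely the values in the interval $\bigl[\,c-\sqrt{1-2ab}\,,\,c+\sqrt{1-2ab}\,\bigr]$, whose midpoint is $c$; hence $c$ is recovered from the surface, and $\mathcal G_{a,b,c}=\mathcal G_{a,b,c'}$ forces $c=c'$.

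For the last assertion of (iii) I would invoke Lemma~\ref{basicrelations}(i): along any geodesic contained in $\mathcal G_{a,b,c}$ one has $c=ax(t)-by(t)+\dot z(t)$, so $\dot z=-(ax-by-c)$ and therefore $\dot z^{2}=(ax-by-c)^{2}=1-2U_{a,b}(z)$. When $|z-h|=A(k)$ this is $0$, so $\dot z=0$ at $p$, i.e. $p$ is a critical point in the sense of Definition~\ref{def.critical} (it lies on a critical line $ax-by=c$, $z-h=\pm A(k)$). I do not anticipate any genuine difficulty here; the only point that needs more than a word is the "distinct $c$ gives a distinct cylinder" step, which the midpoint-of-range remark settles.
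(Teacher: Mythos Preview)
Your proof is correct and follows essentially the same route as the paper: both reduce the question to solving $(ax-by-c)^{2}=1-2U_{a,b}(z)$ for $c$ and read off the trichotomy from the sign of the right-hand side. You actually supply two points the paper leaves implicit, namely the verification that distinct values of $c$ give distinct cylinders (your midpoint-of-range argument) and the justification of the ``critical point'' clause in~(iii) via $\dot z=-(ax-by-c)$; the paper's proof simply omits the latter.
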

In case (iii) one may say that the two Grayson cylinders coalesce into a single one at $p$.

\begin{proof}
Statement (i) is immediate from Lemma \ref{boundsGrayson}. 
To prove (ii) observe that  $|z-h| < A(k)$  if and only if $2U_{a,b}(z)< 1$, so
\[
  c = a x - b y \pm \sqrt{\,1-2U_{a,b}(z)}.
\]
This gives two distinct values of $c$, hence two distinct cylinders. The third case is similar; 
$|z-h|=A(k)$ if and only if $2U_{a,b}(z)=1$, yielding a unique value  $c ax-by$.
\end{proof}

\medskip

Chose now a point $p = (x,y,z)$ with $|z-h|\leq A(k)$ and denote by  $\mathcal G$ and $\mathcal G^{\star}$ 
the two  Grayson cylinders with parameters $a,b$ containing $p$.  To be  precise, $\mathcal G = \mathcal G_{a,b,c}$ and $\mathcal G^{\star} = \mathcal G_{a,b,c^{\star}}$ with 
$$
   c = a x - b y + \sqrt{\,1-2U_{a,b}(z)},  \quad   c^\ast = a x - b y - \sqrt{\,1-2U_{a,b}(z)}.
$$
We  claim that at the point $p=(x,y,z)\in\mathcal G_{a,b,c}\cap\mathcal G_{a,b,c^{\star}}$, 
we have
$$
  \eta^{\star}_p = \eta_p  \qquad \text{and} \qquad    \xi^{\star}_p = \xi_p^{\dagger}. 
$$
This is indeed obvious for the field $\eta$ and for the field $\xi$ , using 
$$
  (c^{\star}-ax+by) = - (c-ax+by). 
$$
We have 
\begin{align*}
  \xi^{\star}_p &= a \ee^{2z}\,\frac{\partial}{\partial x} + b \ee^{-2z}\,\frac{\partial}{\partial y}  + (c^{\star}-ax+by)\,\frac{\partial}{\partial z}  
\\ &=
a \ee^{2z}\,\frac{\partial}{\partial x} + b \ee^{-2z}\,\frac{\partial}{\partial y}  - (c-ax+by)\,\frac{\partial}{\partial z}  
\\ &= \xi_p^{\dagger}. 
\end{align*}
Let us now denote by $\gamma$ the flow line of $\xi$ starting at $p$ and by $\gamma^{\star}$ the flow line of $\xi^{\star}$  at $p$. 
They are the unique geodesic starting at $p$ on their respective Grayson cylinder. 
Flowing both fields (or equivalently following both geodesics) over one period $T(k)$ will land us again on the same intersection line, with the same drift.  Specifically, it follows from Theorem \ref{th:spiraling} that 
\[
  \gamma(t_{0}+T)-\gamma(t_{0})   =  \gamma^{\star}(t_{0}+T)-\gamma^{\star}(t_{0})   =    ab M(k)\,(b,\,a,\,0).
\]
In particular we have 
$$
 \gamma(t_{0}) =  \gamma^{\star}(t_{0}) \in  \mathcal G \cap \mathcal G^{\star}  \quad \Rightarrow 
 \quad  \gamma(t_{0 }+ T) =  \gamma^{\star}(t_{0}+T).
$$

\medskip

If $p$ is a critical point, then $c^{\star}=c$, hence $\mathcal G^{\star}=\mathcal G$ and $\xi^{\star}=\xi$; the two geodesics coincide. Approaching this situation from generic points yields, in the limit, a nontrivial Jacobi field along the common geodesic.

\newpage 

\begin{center}
 \begin{figure}[htbp]
\captionsetup[figure]{labelformat=empty}
  \centering
  \includegraphics[width=0.76\linewidth]{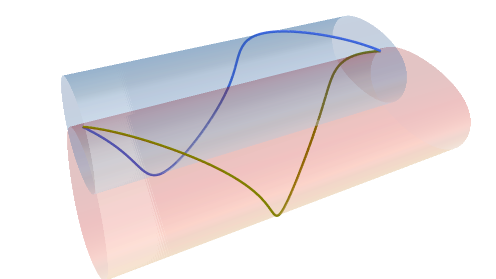}
  \caption*{Two partner segments and their respective Grayson cylinders.}
\label{fig:partnerandcylinder}
\end{figure}
\end{center}

\section{On the cut locus in $\SOL$}\label{sec:cutlocus} 

Recall that if $\gamma$ is a unit-speed geodesic in a complete Riemannian manifold
and $t\in\R$, the \emph{cut length of $t$ along $\gamma$} is defined as 
\[
  \cut_{\gamma}(t) =   \sup\bigl\{\,s\ge 0 \ \big|\ d\bigl(\gamma(t),\gamma(t+s)\bigr)=s\,\bigr\}
\in (0, \infty]. 
\]
If $\cut_{\gamma}(t) < \infty$, then the point $q=\gamma(t+s)$ is said to be a \textit{cut point} with respect to 
the base point $p = \gamma(s)$. Finally, the \textit{cut locus}\index{cut locus} of a point $p$ is the set of all the cut points in $\SOL$ with respect to the base point $p$. We denote it by
$$
  \mathrm{Cut}(p) = \{\gamma(s) \mid  \gamma \ \text{is a geodesic such that $\gamma(0) = p$ and $s= \cut_{\gamma}(0)$ }\}.
$$
The following result is classical in Riemannian geometry (see, e.g., \cite[Lemma~5.2]{CheegerEbin}):

\smallskip 

\begin{lemma}\label{lem.cutlocus}
If  $s:=\cut_{\gamma}(t)<\infty$, then $s$ is the smallest positive number such that 
\begin{enumerate}[\quad (i)]
\item  either $\gamma(t)$ and $\gamma(t+s)$ are conjugate along $\gamma|_{[t,\,t+s]}$;
\item or there exists a geodesic arc of length $s$, distinct from $\gamma|_{[t,\,t+s]}$, joining $\gamma(t)$ to $\gamma(t+s)$.
\end{enumerate}
\end{lemma}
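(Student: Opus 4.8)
The plan is to derive this from the standard Riemannian theory of cut points, using only that $\SOL$ is complete (so Hopf--Rinow supplies minimizing geodesics between any two points), in three movements. First, I would record that $\gamma|_{[t,t+s]}$ is itself minimizing: the set $E=\{s'\ge 0 : d(\gamma(t),\gamma(t+s'))=s'\}$ is an interval containing $0$, because a sub-arc of a minimizing geodesic minimizes, and it is closed, because $d$ and $\gamma$ are continuous; hence $E=[0,s]$.

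Second -- the heart of the argument -- I would show that $s$ satisfies (i) or (ii). Choose $s_n>s$ with $s_n\to s$; then $s_n\notin E$, so $\ell_n:=d(\gamma(t),\gamma(t+s_n))<s_n$ strictly, while $\ell_n\to d(\gamma(t),\gamma(t+s))=s$. Pick unit-speed minimizing geodesics $\sigma_n$ from $\gamma(t)$ to $\gamma(t+s_n)$ and, using compactness of the unit sphere in $T_{\gamma(t)}\SOL$, pass to a subsequence so that $v_n:=\dot\sigma_n(0)\to v$. The geodesic $\sigma$ with $\dot\sigma(0)=v$ then satisfies $\sigma(s)=\lim\exp_{\gamma(t)}(\ell_n v_n)=\lim\gamma(t+s_n)=\gamma(t+s)$, and $\sigma|_{[0,s]}$ has length $s=d(\gamma(t),\gamma(t+s))$, hence is minimizing. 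If $v\ne\dot\gamma(t)$, then $\sigma|_{[0,s]}$ is a geodesic arc of length $s$ distinct from $\gamma|_{[t,t+s]}$, and (ii) holds. If $v=\dot\gamma(t)$, I would argue that $\exp_{\gamma(t)}$ must be singular at $s\dot\gamma(t)$: otherwise it is a local diffeomorphism there, and since $\exp_{\gamma(t)}(s_n\dot\gamma(t))=\gamma(t+s_n)=\exp_{\gamma(t)}(\ell_n v_n)$ with both $s_n\dot\gamma(t)\to s\dot\gamma(t)$ and $\ell_n v_n\to s\dot\gamma(t)$, local injectivity would force $s_n\dot\gamma(t)=\ell_n v_n$ for large $n$, contradicting $s_n>s\ge\ell_n$. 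Thus $\gamma(t)$ and $\gamma(t+s)$ are conjugate, which is (i).

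Third, I would check that no $s'\in(0,s)$ satisfies (i) or (ii). By the first movement, $\gamma|_{[t,t+s']}$ is minimizing. If $\gamma(t)$ and $\gamma(t+s')$ were conjugate along it, then $\gamma$ would not minimize strictly beyond $\gamma(t+s')$, contradicting that $\gamma|_{[t,t+s'']}$ minimizes for $s'<s''<s$; so (i) fails at $s'$. If a geodesic arc $\sigma$ of length $s'$ joining $\gamma(t)$ to $\gamma(t+s')$ were distinct from $\gamma|_{[t,t+s']}$, then $\sigma$ is also minimizing and necessarily $\dot\sigma(s')\ne\dot\gamma(t+s')$ (else uniqueness of geodesics would make them equal); concatenating $\sigma$ with $\gamma|_{[t+s',t+s'']}$ produces a broken path of length $s''$ from $\gamma(t)$ to $\gamma(t+s'')$ with a genuine corner at $\gamma(t+s')$, which can be shortened, so $d(\gamma(t),\gamma(t+s''))<s''$ -- again contradicting minimality of $\gamma|_{[t,t+s'']}$ for $s''\in(s',s)$. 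Hence (ii) fails at $s'$.

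The only genuinely delicate step is the $v=\dot\gamma(t)$ case of the second movement: one must run the local-injectivity-of-$\exp_{\gamma(t)}$ argument carefully and make essential use of the \emph{strict} inequality $\ell_n<s_n$. The remaining steps rely only on continuity, the triangle inequality, and the two classical lemmas invoked above (a geodesic does not minimize past a first conjugate point; rounding a corner strictly shortens a path), so no new ideas beyond the standard toolkit are required.
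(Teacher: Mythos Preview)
Your proof is correct and follows the standard argument; the paper itself does not prove this lemma but simply cites it as classical (referring to Cheeger--Ebin, Lemma~5.2), and your three-movement proof is essentially a careful reproduction of that argument.

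One minor wording issue: in the second movement you write ``contradicting $s_n>s\ge\ell_n$'', but $s\ge\ell_n$ need not hold (we only know $\ell_n\to s$). What you actually need, and what you correctly derive, is the strict inequality $\ell_n<s_n$; taking norms in $s_n\dot\gamma(t)=\ell_n v_n$ then gives $s_n=\ell_n$, which is the contradiction. Everything else is clean.
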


\medskip

Combining this classical result with Theorem \ref{thm:rendezvous} immediately yields:

\begin{corollary}\label{cor:nonmin-after-T}
Let $\gamma$ be a generic unit-speed geodesic with modulus $k\in(0,1)$ and period $T=T(k)$.
If \  $t_2-t_1>T$, then
\[
  d\bigl(\gamma(t_1),\gamma(t_2)\bigr)\;<\;t_2-t_1,
\]
i.e.\ the segment $\gamma|_{[t_1,t_2]}$ is not minimizing.
\end{corollary}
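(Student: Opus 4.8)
The plan is to apply the rendez--vous theorem (Theorem~\ref{thm:rendezvous}) at the parameter $t_1$ and feed the outcome into the classical cut-length characterization (Lemma~\ref{lem.cutlocus}). First I would set $T = T(k)$ and split into the two alternatives of Theorem~\ref{thm:rendezvous} according to whether $\dot z(t_1)$ vanishes. If $\dot z(t_1)=0$, then $\gamma(t_1)$ and $\gamma(t_1+T)$ are conjugate along $\gamma|_{[t_1,t_1+T]}$, so alternative (i) of Lemma~\ref{lem.cutlocus} holds with $s=T$. If $\dot z(t_1)\neq 0$, then $\gamma(t_1)$ and $\gamma(t_1+T)$ are joined by two distinct geodesic arcs of length $T$ (the arc $\gamma^{\star}|_{[t_1,t_1+T]}$ differs from $\gamma|_{[t_1,t_1+T]}$ because $\dot\gamma^{\star}(t_1)=\dot\gamma(t_1)^{\dagger}\neq\dot\gamma(t_1)$), so alternative (ii) holds with $s=T$.

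In either case Lemma~\ref{lem.cutlocus} gives $\cut_{\gamma}(t_1)\le T$. Consequently, for every $s>T$ the segment $\gamma|_{[t_1,t_1+s]}$ fails to be minimizing, i.e. $d\bigl(\gamma(t_1),\gamma(t_1+s)\bigr)<s$. Specializing to $s=t_2-t_1>T$ yields $d\bigl(\gamma(t_1),\gamma(t_2)\bigr)<t_2-t_1$, which is the assertion of Corollary~\ref{cor:nonmin-after-T}.

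There is essentially no obstacle here, since the corollary merely repackages Theorem~\ref{thm:rendezvous} and Lemma~\ref{lem.cutlocus}. The one point deserving a word of care is the logical direction in which Lemma~\ref{lem.cutlocus} is used: it is phrased as a \emph{necessary} condition on $\cut_{\gamma}(t)$, so one should note that exhibiting some finite $s_0$ (here $s_0=T$) for which (i) or (ii) holds forces $\cut_{\gamma}(t_1)<\infty$ and $\cut_{\gamma}(t_1)\le s_0$. Alternatively, one can bypass the lemma and argue directly: in the conjugate case a geodesic does not minimize beyond a conjugate point, while in the two-arc case the concatenation of $\gamma^{\star}|_{[t_1,t_1+T]}$ with $\gamma|_{[t_1+T,t_2]}$ is a broken geodesic of length exactly $t_2-t_1$ having a genuine corner at $\gamma(t_1+T)$ (one checks $\dot\gamma^{\star}(t_1+T)=\dot\gamma(t_1+T)^{\dagger}\neq\dot\gamma(t_1+T)$ using the $T$-periodicity of $z$), and rounding off that corner strictly shortens the path.
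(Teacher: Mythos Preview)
Your proposal is correct and follows exactly the paper's approach: the paper simply says the corollary is obtained by ``combining'' Lemma~\ref{lem.cutlocus} with Theorem~\ref{thm:rendezvous}, and you have spelled out that combination carefully (including the subtlety about the logical direction of the lemma, which the paper glosses over). Your alternative direct argument via the broken geodesic is also sound and a nice addition.
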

\qed 

\medskip

The converse of this corollary also holds. This is the main theorem of  Coiculescu and Schwartz in ~\cite{CoiculescuSchwartz}, although that  paper  contains several other deep results. The proof is rather elaborate,  and we recall it here without proof.

\begin{theorem}\label{th.MainCS}
A generic geodesic segment is minimizing between its endpoints if and only if 
its length does not exceed $T(k)$, where $k$ is the modulus of the geodesic.
\end{theorem}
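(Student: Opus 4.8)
\textit{Approach.} One direction is already available: a geodesic segment longer than $T(k)$ fails to be minimizing by Corollary~\ref{cor:nonmin-after-T}. For the converse it suffices to show that every generic unit--speed geodesic $\gamma$ of modulus $k$ satisfies $\cut_{\gamma}(t_0)\ge T(k)$ for all $t_0\in\R$; combined with Corollary~\ref{cor:nonmin-after-T} this forces $\cut_{\gamma}(t_0)=T(k)$, which is the assertion. By Lemma~\ref{lem.cutlocus} this reduces, for every $L$ with $0<L<T(k)$, to two claims: (a)~$\gamma(t_0)$ and $\gamma(t_0+L)$ are not conjugate along $\gamma$; and (b)~there is no geodesic arc of length $L$ joining $\gamma(t_0)$ to $\gamma(t_0+L)$ other than $\gamma|_{[t_0,t_0+L]}$. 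By Proposition~\ref{kisinvariant2} one may normalize the modulus and work on the standard Grayson cylinder, and by the cylinder's reflections together with the translations along its axis one may further restrict to base points with $z(t_0)\in[h,\,h+A(k)]$.

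For (a) I would analyze the Jacobi equation along $\gamma$. Because the geodesic system of Lemma~\ref{zpp} is triangular, a geodesic variation has a Jacobi field whose vertical component $J_z$ solves a scalar Hill--type equation $\ddot J_z=-4\,U_{a,b}(z(t))\,J_z+f(t)$ (note $U_{a,b}''=4\,U_{a,b}$), with forcing $f$ linear in the variations $\delta a,\delta b$ of the principal constants, after which $J_x$ and $J_y$ are recovered by quadrature. A Jacobi field vanishing at $t_0$ is determined by $\dot J(t_0)$, from which $\delta a$ and $\delta b$ can be read off, so the conjugate points of $\gamma(t_0)$ along $\gamma$ are the zeros of an explicit $3\times 3$ determinant built from the two independent solutions of the Hill equation and the ensuing quadratures. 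One of those solutions is $\dot z(t)$, obtained by differentiating \eqref{eq.zppp}; at a critical base point it is precisely the vertical component of the Jacobi field written down in the proof of Theorem~\ref{thm:rendezvous}, which is why that determinant vanishes at $L=T(k)$. To prove that $T(k)$ is the \emph{first} zero for every base point I would evaluate the determinant---using the second Hill solution obtained by reduction of order, or the explicit Jacobi--elliptic solution of \eqref{eq.zppbis} recorded in \cite{Troyanov98}---and rule out zeros on $0<L<T(k)$ by a Sturm--type comparison; this is a finite, if laborious, computation.

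Claim (b) is the substantial part, and is in effect the theorem of Coiculescu and Schwartz itself. The geometric mechanism behind it is already visible from what precedes: through a point $p$ with $|z(p)-h|<A(k)$ there pass exactly two Grayson cylinders with the same principal constants $a,b$---those of $\gamma$ and of its partner $\gamma^{\star}$---and by Theorem~\ref{th:spiraling} these two geodesics drift in lockstep and do not meet again before time $T(k)$; thus the first serious competitor of $\gamma$ as a shortest path reappears exactly at length $T(k)$, not sooner. Upgrading this to a proof that \emph{no} geodesic at all does better before $T(k)$ is the delicate point: one must control the whole family of geodesics issuing from $p$, organize them through the two--parameter family of Grayson cylinders passing near $p$, establish the required monotonicity of the winding functions $T$ and $M$ in the modulus and in the base altitude, and finally prove that the exponential map is injective on the star--shaped domain $\{\,v\in T_{p}\SOL \mid \|v\|<T(k_v)\,\}$, where $k_v$ denotes the modulus of the geodesic with initial velocity $v$. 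I expect this last injectivity statement---which amounts to a precise description of how the geodesic spheres of $\SOL$ meet the cut locus---to be the main obstacle, and it is precisely this that is carried out in \cite{CoiculescuSchwartz}; for that reason we recall their theorem here rather than reprove it.
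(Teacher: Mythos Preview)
Your proposal and the paper arrive at the same place: the paper does not prove this theorem at all. Immediately before the statement, the text reads ``The proof is rather elaborate, and we recall it here without proof,'' attributing the result to Coiculescu--Schwartz~\cite{CoiculescuSchwartz}. Your proposal is therefore not so much a proof as a commentary on what a proof would require, and you correctly recognize in your final paragraph that claim~(b)---the injectivity of the exponential map on the star-shaped domain---\emph{is} the Coiculescu--Schwartz theorem, and you defer to their paper just as the present paper does.

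Two minor remarks on the extra material you supply. First, your discussion of claim~(a) (absence of conjugate points before $T(k)$) is not part of the paper's treatment at all; the paper neither isolates this step nor attempts it, so your Hill--equation/Sturm--comparison sketch is additional content, not a comparison point. Second, even if~(a) were carried out completely, it would not shorten the route to the theorem: Lemma~\ref{lem.cutlocus} gives a disjunction, so ruling out conjugate points still leaves the full burden on~(b), which is precisely the deep part you (and the paper) decline to reprove.
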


\medskip 

 \begin{corollary}\label{cor:cutlength}
{\rm (A.)} \ For any unit-speed geodesic $\gamma$ in $\SOL$ and any $t\in\mathbb{R}$,
$$
\cut_{\gamma}(t) = 
\begin{cases} \infty, & \text{if $\gamma$ is vertical or hyperbolic},\\[2pt]
   T(k), & \text{if $\gamma$ is generic with modulus $k$},\\[2pt]
   \sqrt{2}\,\pi, & \text{if $\gamma$ is horizontal}.
\end{cases}
$$
{\rm (B.)} \  Moreover, the cut locus of any point $p$ is contained in the horizontal plane through $p$. In particular, $\mathrm{Cut}(0)\subset \Pi_0$.
\end{corollary}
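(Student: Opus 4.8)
The plan is to treat the three types of geodesics separately, then handle the containment statement (B.) by a symmetry argument. For \textbf{part (A.)}, the generic case is immediate: by Corollary~\ref{cor:nonmin-after-T} a generic segment of length exceeding $T(k)$ fails to be minimizing, while Theorem~\ref{th.MainCS} tells us that every generic segment of length at most $T(k)$ is minimizing; hence $\cut_\gamma(t)=T(k)$ for all $t$. For the \textbf{vertical and hyperbolic} cases, each such geodesic lies in a totally geodesic copy of $\mathbb H^2$ (a vertical leaf $\mathcal H'_{y_0}$ or $\mathcal H''_{x_0}$, or, for vertical lines, still inside such a leaf), and a geodesic line in $\mathbb H^2$ is globally minimizing; since the ambient distance in $\SOL$ can only be smaller than the intrinsic one, and the intrinsic distance already equals the arclength, every subsegment is minimizing in $\SOL$ as well, so $\cut_\gamma(t)=\infty$. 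The delicate point is the \textbf{horizontal} case: here $k=0$ is a boundary value, $T(k)$ is not literally defined at $k=0$, and the claim is that the cut length equals $\sqrt2\,\pi=\inf_{k\in(0,1)}T(k)$ (the infimum being attained in the limit; recall from Appendix~A that $T(k)=\sqrt{8(1+k^2)}\,K(k)\to\sqrt8\cdot\tfrac\pi2=\sqrt2\,\pi$ as $k\to0$).

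For the horizontal case I would argue as follows. Normalize by isometries so that $\gamma(t)=\tfrac1{\sqrt2}(t,t,0)$ lies in the plane $\Pi_0$, with $h=0$, $a=b=1/\sqrt2$. To get the upper bound $\cut_\gamma(0)\le\sqrt2\,\pi$: approximate $\gamma$ by generic geodesics $\gamma_k$ with small modulus $k$ and the same initial point, chosen so that $\gamma_k\to\gamma$ uniformly on compact sets as $k\to0$ (concretely, perturb the initial velocity slightly off the bisecting horizontal direction). For each such $\gamma_k$, the points $\gamma_k(0)$ and $\gamma_k(T(k))$ are joined by two distinct minimizing arcs of length $T(k)$ by the rendezvous theorem together with Theorem~\ref{th.MainCS}; passing to the limit $k\to0$ and using $T(k)\to\sqrt2\,\pi$, one obtains a second minimizing arc from $\gamma(0)$ to $\gamma(\sqrt2\,\pi)$ (one must check the limiting arcs remain distinct, which follows because the limiting partner geodesic $\gamma^\star$ has initial velocity obtained by flipping the $z$-component—here zero—but the perturbations approach from genuinely different directions, so the second family limits to a geodesic meeting $\gamma$ transversally, or one argues directly that $\gamma(\sqrt2\,\pi)=(\pi,\pi,0)$ is reached by a hyperbolic-type broken-symmetric competitor). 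This shows $\cut_\gamma(0)\le\sqrt2\,\pi$. For the matching lower bound, suppose $\cut_\gamma(0)=s<\sqrt2\,\pi$; then by Lemma~\ref{lem.cutlocus} either $\gamma(0)$ and $\gamma(s)$ are conjugate along $\gamma$, or there is a distinct geodesic arc of length $s$ joining them. Conjugate points along a horizontal geodesic can be ruled out by a direct Jacobi-field computation on $\Pi_0$ (the relevant Jacobi equation, using the connection formulas $\nabla_XX=Z$, $\nabla_YY=-Z$, $\nabla_XZ=-X$, $\nabla_YZ=Y$ and $K(X,Y)=1$, has no conjugate point before parameter $\sqrt2\,\pi$). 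The existence of a distinct arc of length $s<\sqrt2\,\pi$ would, after a limiting/continuity argument in $k$, contradict the sharpness of $T(k)$ as $k\to0$; alternatively, one invokes the explicit description of $\Cut(0)$ from \cite{CoiculescuSchwartz} to read off that the nearest cut point along a horizontal direction sits at distance $\sqrt2\,\pi$.

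For \textbf{part (B.)}, the containment $\Cut(p)\subset\Pi_{z(p)}$ follows from the structure of cut points combined with a symmetry. It suffices to treat $p=0$; then one must show that any cut point $q=\gamma(s)$ of a unit-speed geodesic with $\gamma(0)=0$ satisfies $z(q)=0$. By Lemma~\ref{lem.cutlocus}, at a cut point there is either conjugacy or a second minimizing geodesic $\sigma$ of the same length from $0$ to $q$. For the generic case, $s=T(k)$ and the partner geodesic $\gamma^\star$ of Theorem~\ref{thm:rendezvous} provides the second arc; by \eqref{def.partner2} one has $\gamma^\star(0)=\gamma(0)$ with initial velocity $\dot\gamma^\star(0)=\dot\gamma(0)^\dagger$, the vertical flip. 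Now observe that the isometry $\rho:(x,y,z)\mapsto(y,x,-z)$ of $\SOL$ composed with the coordinate swap fixes $0$ and sends $\dot\gamma(0)$ precisely to $\dot\gamma^\star(0)$ when $\gamma(0)=0$ lies on the axis-through-origin normalization; consequently $\rho(\gamma)=\gamma^\star$, and since both end at $q$ we get $\rho(q)=q$, forcing $z(q)=0$ (as $\rho$ negates the height while the swap does not move a point on the diagonal—more carefully, one combines $\rho$ with a diagonal coordinate sign change so that the composite isometry fixes $0$, maps $\gamma$ to its partner, hence fixes $q$, and reverses the sign of $z$, giving $z(q)=-z(q)$). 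For the horizontal case $z\equiv0$ so $q\in\Pi_0$ trivially, and vertical/hyperbolic geodesics have no cut point. This symmetry step—identifying the partner geodesic with the image of $\gamma$ under a height-reversing isometry fixing the base point—is the crux of (B.), and the main obstacle is checking that after the normalizations the relevant isometry genuinely fixes both $0$ and the cut point; I expect this to be the one calculation requiring real care.
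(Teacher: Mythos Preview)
Your treatment of the generic and vertical/hyperbolic cases matches the paper's. The horizontal case and Part~(B.), however, both contain genuine gaps, and in each instance the paper's argument is considerably simpler than what you attempt.

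\medskip

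\textbf{Horizontal case, upper bound.} Your limiting argument cannot produce two distinct arcs in the limit. For the horizontal geodesic $\alpha$ one has $\dot z\equiv 0$, so the partner's initial velocity $\dot\gamma^\star(0)=\dot\gamma(0)^\dagger$ coincides with $\dot\gamma(0)$; hence as $k\to 0$ the partner family collapses onto $\alpha$ itself, not onto a transversal geodesic. This is precisely case~(i) of the rendezvous theorem: at a critical point you get a \emph{conjugate} point, not a second arc. What survives the limit is a Jacobi field, and the paper proceeds directly: along $\alpha(t)=\tfrac{1}{\sqrt2}(t,t,0)$ one writes down the explicit Jacobi field
\[
  J=(1-\cos(\sqrt2\,t))(X-Y)+\sin(\sqrt2\,t)\,Z,
\]
checks it satisfies the Jacobi equation, and observes it vanishes at $t=0$ and $t=\sqrt2\,\pi$. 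This gives $\cut_\alpha(0)\le\sqrt2\,\pi$ in one line.

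\medskip

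\textbf{Horizontal case, lower bound.} Your proposed route (rule out conjugate points by Jacobi analysis, then invoke a vague limiting argument or defer to \cite{CoiculescuSchwartz}) is unnecessary. The paper's argument is short: if $\cut_\alpha(0)<\sqrt2\,\pi$ then there is a second minimizing geodesic $\beta$ of length $s<\sqrt2\,\pi$ from $0$ to the relevant point; this $\beta$ must be generic (it cannot be vertical or hyperbolic since the target has both horizontal coordinates nonzero, and it cannot be horizontal since $\alpha$ is the unique horizontal geodesic in that direction). But then $s$ equals the cut length along $\beta$, which by the already-established generic case is $T(k)>\sqrt2\,\pi$, a contradiction.

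\medskip

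\textbf{Part (B.).} Your symmetry argument is broken: there is no isometry of $\SOL$ fixing the origin whose differential sends $(a,b,c)$ to $(a,b,-c)$. Composing $\rho:(x,y,z)\mapsto(y,x,-z)$ with the coordinate swap yields $(x,y,z)\mapsto(x,y,-z)$, which is explicitly \emph{not} an isometry (the paper notes this in the remark after the list of isometries). The paper's proof of~(B.) is a one-liner and uses no symmetry at all: for a generic geodesic the cut point is $\gamma(t+T(k))$, and Theorem~\ref{th:spiraling} already says $z(t+T(k))=z(t)$. Horizontal geodesics stay at constant height, and vertical/hyperbolic ones have no cut point. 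Done.
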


\smallskip 

Note that this result shows in particular that the \emph{global injectivity radius} of $\SOL$ is 
$\sqrt{2}\,\pi$, that is, every geodesic segment of length $\leq \sqrt{2}\,\pi$ is minimizing, 
whereas beyond this threshold some geodesics cease to be minimizing.

\smallskip

\begin{proof}
{\rm (A.)} \  A vertical and hyperbolic geodesic lie in totally geodesic  planes $\mathcal{H}'_{y_0}$ or $\mathcal{H}''_{x_0}$, which are  isometric to the hyperbolic plane. In ${H}^2$, there exists exactly one geodesic through any pair of distinct points, hence the cut length is $\infty$.

\smallskip 

For generic geodesics, Corollary~\ref{cor:nonmin-after-T} gives $\cut_{\gamma}(t)\le T(k)$. The converse inequality (minimality up to $T(k)$) is exactly Theorem~\ref{th.MainCS}. Therefore $\cut_{\gamma}(t)=T(k)$.

\smallskip 
Horizontal geodesics are geometrically equivalent to the unit–speed line
$\alpha(t)=\frac{1}{\sqrt{2}}(t,t,0)$.
Along $\alpha$, the vector field
$$
  J=\bigl(1-\cos(\sqrt{2}\,t)\bigr)(X-Y)+\sin(\sqrt{2}\,t)\,Z
$$
is a Jacobi field \,  (\textit{short proof}:  using the Levi–Civita connection, one checks that $\nabla_{\alpha'}(X-Y)=Z$ and $\nabla_{\alpha'}Z = Y -X$, so the Jacobi equation for  $J(t)=p(t)(X-Y)+q(t)Z$ along the geodesic $\alpha$, which is 
$\nabla_{\alpha'}\nabla_{\alpha'}J + R(J,\alpha')\alpha' = 0$, reduces to the system
$p''-2q'=0$ and $q''+2p'-2q=0$ which is  satisfied for $p(t)=1-\cos(\sqrt{2}\,t)$ and $q(t)=\sin(\sqrt{2}\,t)$).

\smallskip

Furthermore, $J$ vanishes at $t=0$ and $t=\sqrt{2}\pi$;
hence the point $P=(\sqrt{2}\pi,\sqrt{2}\pi,0)$ is conjugate to $0$ along $\alpha$,
and therefore $\cut_{\gamma}(t)\le \sqrt{2}\pi$.

To prove equality, suppose there exists a shorter geodesic, say $\beta$, joining the origin $0$ to the point $P = (\sqrt2\pi,\sqrt2\pi,0)$.
Then $\beta (0) = 0$ and $\beta (s) =P$ for some $s < \sqrt2\pi$. Then $\beta$ must be a generic geodesic (since clearly $\alpha$ is the only vertical geodesic from $0$ to $P$), thus by   Corollary \ref{cor:nonmin-after-T} we must have $s = T(k)$, where $k\in (0,1)$ is the modulus of $\beta$.  Using the  lower bound $T(k)>\sqrt{2}\,\pi$ proved in \S\ref{sec.TMHproperties}, we get 
$$
  s = T(k) > \sqrt{2}\,\pi > s,
$$
which is impossible. Hence $\cut_{\alpha}(t)=\sqrt{2}\,\pi$.

\medskip 

{\rm (B.)} \ The last statement is trivial for horizontal geodesics. For vertical and hyperbolic geodesics there is nothing to prove, since there is no cut point in that case. For generic geodesics, Theorem~\ref{th:spiraling} shows that $\gamma(t)$ and $\gamma(t+T(k))$ have the same height,
\end{proof}

\medskip

We refer to \cite{CoiculescuSchwartz} for a more detailed description of the geometry  of the cut locus.

\begin{remark} \rm 
In \cite{Coiculescu21}, 
M.~Coiculescu considered a generalization of Theorem~\ref{th.MainCS} to the so-called \emph{Bianchi groups of Type~VI}, endowed with a left-invariant Riemannian metric which is isometric to $\R^3$ with the metric tensor 
$$
  ds^2 = \ee^{-2z}\,dx^{2} + \ee^{2\alpha z}\,dy^{2} + dz^{2},
$$
(the case $\alpha =1$ corresponds to $\SOL$).
He proved that Theorem~\ref{th.MainCS} still holds for $\alpha=1/2$;   the main difficulty for general $\alpha$ lies in the complexity of the  expression for the period $T(k)$.
\end{remark}

\subsection{Asymptotic distances in the ground plane} 

We conclude with an asymptotic estimate for large horizontal distances.

\begin{theorem}\label{th.largedistance}
Fix $\theta\in(0,\pi/2)$ and set $P_\lambda=\lambda(\cos\theta,\sin\theta,0)$ with $\lambda>0$. Then
\[
  d(0,P_\lambda)=4\log\lambda+O(1) 
\quad \text{as }\lambda\to\infty,
\]
where the error term is uniformly bounded in $\lambda$, with a bound depending only on $\theta$.
\end{theorem}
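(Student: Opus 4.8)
The plan is to prove two matching inequalities, each of the form $4\log\lambda+O_\theta(1)$. For the upper bound I would simply make the "stroll" of Section~\ref{sec:geodesics} quantitative for an arbitrary direction $\theta\in(0,\pi/2)$: concatenate the five segments that rise vertically from $0$ to height $A$, move in the $x$–direction to $(\lambda\cos\theta,0,A)$, descend to height $-A$, move in the $y$–direction to $(\lambda\cos\theta,\lambda\sin\theta,-A)$, and finally rise back to $P_\lambda$. Since the $x$–leg sits at height $A$ and the $y$–leg at height $-A$, the metric \eqref{ds2} gives total length
\[
  \ell(A)=4A+\ee^{-A}\,\lambda(\cos\theta+\sin\theta),
\]
which is minimised at $A=\log\!\bigl(\lambda(\cos\theta+\sin\theta)/4\bigr)$, yielding $d(0,P_\lambda)\le \ell(A)=4\log\lambda+O_\theta(1)$ once $\lambda$ is large enough that $A>0$.

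\textbf{Reduction of the lower bound to a one‑dimensional problem.} Let $\gamma(t)=(x(t),y(t),z(t))$, $t\in[0,L]$, be any curve from $0$ to $P_\lambda$ parametrised by arclength, so that it suffices to bound $L$ from below. The unit–speed identity \eqref{UnitSpeed} gives the pointwise bounds $|\dot x|\le\ee^{z}$, $|\dot y|\le\ee^{-z}$ and $|\dot z|\le 1$; integrating and using $z(0)=z(L)=0$ yields
\[
  X:=\lambda\cos\theta\le\int_0^L \ee^{z(t)}\,dt,\qquad
  Y:=\lambda\sin\theta\le\int_0^L \ee^{-z(t)}\,dt .
\]
The problem is now purely one–dimensional: among $1$–Lipschitz functions $z$ on an interval $[0,L]$ with $z(0)=z(L)=0$ satisfying these two integral constraints, show $L\ge 2\log(XY)-C$, which is exactly $4\log\lambda+O_\theta(1)$ since $XY=\tfrac12\lambda^{2}\sin(2\theta)$.

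\textbf{The one–dimensional estimate.} I would split $[0,L]$ into $P=\{z\ge0\}$ and $N=\{z<0\}$, of Lebesgue measures $p$ and $n$ with $p+n=L$. On each connected component $[\alpha,\beta]$ of $P$ one has $z(\alpha)=z(\beta)=0$ and $0\le z(t)\le\min(t-\alpha,\beta-t)$, hence $\int_\alpha^\beta \ee^{z}\le 2\bigl(\ee^{(\beta-\alpha)/2}-1\bigr)$. The function $g(\ell)=2(\ee^{\ell/2}-1)$ is convex with $g(0)=0$, hence superadditive, so summing over components gives $\int_P \ee^{z}\le g(p)< 2\ee^{p/2}$; since $\ee^{z}<1$ on $N$ this yields $X< 2\ee^{p/2}+n$, and symmetrically $Y< 2\ee^{n/2}+p$. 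A short case analysis finishes: if $2\ee^{p/2}\ge X/2$ and $2\ee^{n/2}\ge Y/2$, then $L=p+n\ge 2\log(X/4)+2\log(Y/4)=2\log(XY)-8\log 2$; otherwise $n\ge X/2$ or $p\ge Y/2$, and then $L\ge\tfrac12\lambda\min(\cos\theta,\sin\theta)$, which for all large $\lambda$ dwarfs $4\log\lambda$. In every case $L\ge 4\log\lambda-C(\theta)$, and together with the upper bound this proves the theorem. (An alternative route runs through the structure theory: the minimiser has length $\le T(k)$ and, at leading order, $T(k)\sim 4\log H(k)$ while $H(k)^2$ matches the product of the coordinate increments $\sim\lambda^{2}\cos\theta\sin\theta$; but this requires identifying the realising geodesic and the $k\to1$ asymptotics of $T$ and $H$ from Appendix~A, so the direct argument above is cleaner.)

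\textbf{Where the difficulty lies.} The upper bound and the reduction to a scalar problem are routine. The genuine point is the \emph{sharp} one–dimensional estimate: the naive bound $\int\ee^{z}\le L\,\ee^{\max z}$ only gives $4\log\lambda-O(\log\log\lambda)$, and squeezing the error down to $O(1)$ requires the observation that the time spent near the top of a height excursion is short — encoded above in the superadditivity bound $\int_P\ee^{z}\le 2\ee^{p/2}$. One should also verify carefully that the exceptional regimes (many vertical oscillations, or a function $z$ with complicated zero set) are harmless; they are, precisely because any such behaviour only increases $L$, hence strengthens the bound.
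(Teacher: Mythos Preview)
Your argument is correct and genuinely different from the paper's. The paper proceeds through the structure theory: it fixes $k$ by $H(k)=\lambda\sqrt{\sin\theta\cos\theta}$, constructs an explicit generic geodesic from $0$ to $P_\lambda$ of length $T(k)$, invokes the Coiculescu--Schwartz Theorem~\ref{th.MainCS} to certify this segment as minimizing, and then reads off the asymptotic from $T(k)\sim 2|\log(1-k^2)|$ and $H(k)\sim 4/\sqrt{1-k^2}$ (Proposition~\ref{prop:limits-k1}). This is exactly the alternative you sketch in parentheses and then set aside.

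Your route is more elementary and entirely self-contained: the upper bound is the direct piecewise path, and the lower bound is a soft metric argument reducing to a one-dimensional Lipschitz inequality, with the superadditivity of $\ell\mapsto 2(\ee^{\ell/2}-1)$ doing the real work. The main payoff is independence from Theorem~\ref{th.MainCS}, which is the deepest external input in the paper's proof; your argument would survive even without it. What the paper's approach buys in return is an exact identity $d(0,P_\lambda)=T(k)$ for $\lambda\ge\lambda_\ast(\theta)$, not merely an asymptotic, and it integrates the result into the cylinder/modulus picture. One small cosmetic point: when you invoke \eqref{UnitSpeed} for an arbitrary unit-speed curve, what you actually use is the bare identity $\ee^{-2z}\dot x^{2}+\ee^{2z}\dot y^{2}+\dot z^{2}=1$, not the geodesic-specific form involving $U_{a,b}$; it would be cleaner to say so.
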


\smallskip

\begin{proof}
\emph{Step 1.} Since $H:(0,1)\to(\pi,\infty)$ is strictly increasing and onto, for $\lambda$ sufficiently large there exists a unique $k\in(0,1)$ such that
\begin{equation}\label{eq.target-H}
  H(k)=\lambda\sqrt{\sin\theta\,\cos\theta}.
\end{equation}

\emph{Step 2.} Define $a>0$ and $b>0$ by 
\[
  a^{2} = \tan(\theta) \frac{1-k^{2}}{2(1+k^{2})},\qquad
  b^{2} = \cot(\theta) \frac{1-k^{2}}{2(1+k^{2})}.
\]
Then 
\[
 \frac{a}{b}=\tan\theta, \qquad 2ab=\frac{1-k^{2}}{1+k^{2}},
\]
and
\[
  a^{2}+b^{2}=\frac{1-k^{2}}{2(1+k^{2})}\,\frac{1}{\sin\theta\,\cos\theta}.
\]
If  $\lambda$ is  large enough, we have $a^{2}+b^{2}\leq 1$ and we can define the unit vector 
$$
  \xi=(a,b,c), \quad \text{where} \quad c = \sqrt{1-a^2-b^2}.
$$

\emph{Step 3.}  Using \eqref{eq.target-H} and $H(k)=M(k)\sqrt{ab}$,  one checks that
\[
  M(k)\,b=\lambda\cos\theta,\qquad M(k)\,a=\lambda\sin\theta.
\]
Indeed,  
$$
\begin{aligned}
(M(k)\,b)^2 &= M(k)^2\,ab\,\cot\theta  =  H(k)^2\cot\theta,\\
            &= \lambda^2\sin\theta\cos\theta\cdot\frac{\cos\theta}{\sin\theta}
              =  (\lambda\cos\theta)^2,
\end{aligned}
$$
and likewise for $M(k)\,a$.

\smallskip 

\emph{Step 4.} Let $\gamma$ be the unit-speed geodesic with  initial conditions $\gamma(0)=0$, $\dot\gamma(0)=\xi$. Applying Theorem~\ref{th:spiraling} we find 
\[
  \gamma\bigl(T(k)\bigr)=  M(k)\,(b,\,a,\,0) =\lambda(\cos\theta,\sin\theta,0)  = P_\lambda.
\]
By the Coiculescu--Schwartz Theorem (Theorem~\ref{th.MainCS}), the segment $\gamma|_{[0,T(k)]}$ is minimizing;, therefore we have
\[
 d(0,P_\lambda)=T(k).
\]

\emph{Step5. \ }  Proposition~\ref{prop:limits-k1} gives us 
$$
 T(k) = 2\log\left(\frac{1}{1-k^2}\right)+O(1), \quad \text{and} \quad 
 H(k) = \frac{1}{\sqrt{1-k^2}} (1+o(1)).
$$
Using now \eqref{eq.target-H} , we obtain 
\begin{align*}
T(k)
&=2\log\left(\frac{H(k)^2}{16}\right)+O(1)
 =2\log\left(\frac{\lambda^2\sin\theta\,\cos\theta}{16} + o(1)\right)+O(1)\\
& = 4 \log\lambda+O(1),
\end{align*}
and  we conclude
$$
 d(0,P_\lambda)=T(k) = 4 \log\lambda+O(1).
$$
\end{proof}

\begin{remark}\rm
\begin{enumerate}[(1)]
\item The threshold $\lambda\ge \pi/\sqrt{\sin\theta\,\cos\theta}$ ensures that \eqref{eq.target-H} has a solution, but the additional condition $a^2+b^2 \leq 1$ imposes the sharper lower bound  
\[
 \lambda_\ast(\theta):=\frac{H(k_\theta)}{\sqrt{\sin\theta\,\cos\theta}},\qquad
 k_\theta=\sqrt{\frac{1-2\sin\theta\cos\theta}{1+2\sin\theta\cos\theta}}.
\]

\item At $\lambda=\lambda_\ast(\theta)$ one has $a^2+b^2=1$ and hence $c=0$, corresponding to the critical line of a Grayson cylinder. In this case the point $P_{\lambda_\ast(\theta)}$ is conjugate to $0$ along $\gamma$. 

\item For $\theta=\pi/4$ we obtain $\lambda_\ast=\sqrt{2}\pi = T(0)$. 

\item As $\theta\to 0$ or $\theta\to\pi/2$, one has $\lambda_\ast(\theta)\to\infty$. In the limiting axial directions
$\theta=0$ or  $\theta=\pi/2$, we leave the generic regime and the unique geodesic from the origin to $P_\lambda$ is a  hyperbolic line. 
\end{enumerate}
\end{remark}

\appendix

\section{Some important integrals.} 

In this technical section we discuss key properties of the period and  the horizontal drift as functions of the modulus. We begin with a brief review of elliptic integrals, which play a basic role in this context. 

\subsection{The Legendre elliptic integrals } \label{sec.EK}

The \emph{Legendre complete elliptic integrals}\index{elliptic integrals} 
of the first and second kinds are defined for $0\le k<1$ by
\begin{equation}\label{eqKELegendre}
  K(k)=\int_{0}^{\pi/2}\frac{d\theta}{\sqrt{1-k^{2}\sin^{2}\theta}},
  \qquad
  E(k)=\int_{0}^{\pi/2}\sqrt{1-k^{2}\sin^{2}\theta}\,d\theta.
\end{equation}
Alternatively, using the substitution $u = \sin \theta$, they can also be written as:
\begin{equation} \label{eqKEalgebrique}
 K(k) = \int_0^1 \frac{du}{\sqrt{(1 - u^2)(1 - k^2 u^2)}} , \qquad
 E(k) = \int_0^1 \sqrt{\frac{1 - k^2 u^2}{1 - u^2}} \, du.
\end{equation}

\medskip

\textbf{Basic properties}
\begin{enumerate}[(i)]
 \item For all $ k \in (0,1) $, the elliptic integrals satisfy
$$
  E(k) < \frac{\pi}{2} < K(k).
$$
\item $ K(k) $ is strictly increasing and $ E(k) $ is strictly decreasing.
\item They satisfy the following obvious limits:
$$
K(0) = E(0) = \frac{\pi}{2}, \qquad \lim_{k \to 1} K(k) = \infty, \qquad \lim_{k \to 1} E(k) = 1.
$$
\item More precisely, we have the finer asymptotics as $k\to1$ 
$$
  K(k) =  \frac{1}{2} |\log(1-k^{2})| + O(1),
$$
see  \cite[Eq.~(3.8.26), p.~75]{Lawden}.
\item Their derivatives with respect to $k$ are 
$$
\frac{dK}{dk} = \frac{E(k) - (1 - k^2)K(k) }{k(1 - k^2)}, \qquad
\frac{dE}{dk} = \dfrac{1}{k} \left( E(k) - K(k) \right), 
$$
see  \cite[\S 3.10]{Lawden}.
\end{enumerate}

\medskip

We will also need the following inequality: 
\begin{lemma}\label{lem.EkK}
For $0<k<1$, one has $E(k)>(1-k^{2})\,K(k)$.
\end{lemma}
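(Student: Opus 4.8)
The plan is to work directly with the algebraic representations \eqref{eqKEalgebrique} of the two complete elliptic integrals and to compute the difference $E(k)-(1-k^{2})K(k)$ as a single integral, showing it is manifestly positive.

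First I would write
\[
  E(k)-(1-k^{2})K(k)
  = \int_{0}^{1}\left( \sqrt{\frac{1-k^{2}u^{2}}{1-u^{2}}}
      - \frac{1-k^{2}}{\sqrt{(1-u^{2})(1-k^{2}u^{2})}} \right) du.
\]
The key point is to combine the two integrands over the common denominator $\sqrt{(1-u^{2})(1-k^{2}u^{2})}$ \emph{before} estimating anything: the numerator then becomes $(1-k^{2}u^{2})-(1-k^{2}) = k^{2}(1-u^{2})$, so that
\[
  E(k)-(1-k^{2})K(k)
  = \int_{0}^{1}\frac{k^{2}(1-u^{2})}{\sqrt{(1-u^{2})(1-k^{2}u^{2})}}\, du
  = k^{2}\int_{0}^{1}\sqrt{\frac{1-u^{2}}{1-k^{2}u^{2}}}\, du.
\]
For $0<k<1$ the integrand on the right is continuous on $[0,1]$ and strictly positive on $(0,1)$, hence the integral is strictly positive and the inequality follows at once.

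I would also record the one-line alternative: the identity above is exactly the statement that $dK/dk = \bigl(E(k)-(1-k^{2})K(k)\bigr)/\bigl(k(1-k^{2})\bigr)$ (one of the basic properties listed in Section~\ref{sec.EK}), and since $K$ is strictly increasing this forces $E(k)-(1-k^{2})K(k)>0$. There is no real obstacle here: the argument is a short elementary manipulation, and the only thing that requires a moment of care is to merge the two singular integrands into one before drawing any conclusion, since neither term is separately integrable in a way that makes the sign obvious.
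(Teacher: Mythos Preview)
Your proof is correct and follows essentially the same approach as the paper: combine the two integrands over a common denominator so that the numerator becomes $k^{2}(1-u^{2})$, yielding a manifestly positive integral (the paper does this in the $\theta$-form, obtaining $\int_{0}^{\pi/2}\frac{k^{2}\cos^{2}\theta}{\sqrt{1-k^{2}\sin^{2}\theta}}\,d\theta$, which is your integral after the substitution $u=\sin\theta$). The paper also records the same one-line alternative via the positivity of $K'(k)$.
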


\begin{proof} This follows from the above formula for the derivative $K'(k)$ since $K(k)$ is strictly increasing. 
However a direct proof is easy. From the definition \eqref{eqKELegendre}, we have 
\begin{align*}
E(k)-(1-k^{2})K(k) &=\int_{0}^{\pi/2}\!\left(\sqrt{1-k^{2}\sin(\theta)^2}
-\frac{1-k^{2}}{\sqrt{1-k^{2}\sin(\theta)^2}}\right)\!d\theta
\\ &=\int_{0}^{\pi/2}\frac{(1-k^{2}\sin(\theta)^2) -(1-k^{2})}{\sqrt{1-k^{2}\sin(\theta)^2}}\,d\theta.
\\&= \int_{0}^{\pi/2}\frac{k^{2}\cos(\theta)^{2}}{\sqrt{1-k^{2}\sin^{2}\theta}}\,d\theta>0.
\end{align*}
\end{proof}

And  the following identity: 
\begin{lemma}\label{lem.K-aux}
For any $0\le k<1$, we have 
\[
\int_{0}^{1}\frac{1+k^{2}u^{2}}{\sqrt{1-u^{2}}\,(1-k^{2}u^{2})^{3/2}}\,du
 = \frac{2E(k)}{1-k^{2}}-K(k).
\]
\end{lemma}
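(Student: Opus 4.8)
The plan is to reduce the identity to a single auxiliary integral and then evaluate that integral by an integration‑by‑parts trick.

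First I would rewrite the numerator using $1+k^{2}u^{2}=2-(1-k^{2}u^{2})$, which splits the integrand as
\[
\frac{1+k^{2}u^{2}}{\sqrt{1-u^{2}}\,(1-k^{2}u^{2})^{3/2}}
 = \frac{2}{\sqrt{1-u^{2}}\,(1-k^{2}u^{2})^{3/2}}
 - \frac{1}{\sqrt{1-u^{2}}\,(1-k^{2}u^{2})^{1/2}}.
\]
The second term integrates to $K(k)$ by the algebraic form \eqref{eqKEalgebrique}, so the whole statement reduces to proving that
\[
I(k) := \int_{0}^{1}\frac{du}{\sqrt{1-u^{2}}\,(1-k^{2}u^{2})^{3/2}} = \frac{E(k)}{1-k^{2}},
\]
since then the left-hand side equals $2I(k)-K(k)=\dfrac{2E(k)}{1-k^{2}}-K(k)$.

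To evaluate $I(k)$ I would introduce the function $f(u)=\dfrac{u\sqrt{1-u^{2}}}{\sqrt{1-k^{2}u^{2}}}$ on $[0,1]$. It vanishes at both endpoints (at $u=1$ because $\sqrt{1-u^{2}}\to0$ while the other factors stay bounded for $0\le k<1$), hence $\int_{0}^{1}f'(u)\,du=0$. A direct differentiation and simplification gives
\[
f'(u)=\frac{1-2u^{2}+k^{2}u^{4}}{\sqrt{1-u^{2}}\,(1-k^{2}u^{2})^{3/2}}
      =\frac{(1-u^{2})-u^{2}(1-k^{2}u^{2})}{\sqrt{1-u^{2}}\,(1-k^{2}u^{2})^{3/2}}.
\]
Then I would expand each of the two resulting integrals using the elementary identities $u^{2}=\tfrac1{k^{2}}\bigl(1-(1-k^{2}u^{2})\bigr)$ and $1-u^{2}=\tfrac1{k^{2}}\bigl((1-k^{2}u^{2})-(1-k^{2})\bigr)$. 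This rewrites $\int_{0}^{1}f'=0$ as a single linear relation among $K(k)$, $E(k)$ and $I(k)$, namely $K(k)-(1-k^{2})I(k)=K(k)-E(k)$, from which $I(k)=E(k)/(1-k^{2})$ follows at once.

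There is no genuine obstacle here: the only thing requiring care is the bookkeeping in re-expressing the two pieces of $\int f'$ in terms of $K$, $E$, and $I$ (and double-checking the vanishing of the boundary term of $f$). As an alternative to the auxiliary function, one could instead invoke the standard derivative formula for $E(k)$ recorded among the basic properties above, which yields $\int_{0}^{1}\tfrac{u^{2}\,du}{\sqrt{(1-u^{2})(1-k^{2}u^{2})}}=\tfrac{K(k)-E(k)}{k^{2}}$; but this alone does not isolate $I(k)$, so the $f(u)$ argument is the more self-contained route and is the one I would write up.
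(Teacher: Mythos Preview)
Your argument is correct. The decomposition $1+k^{2}u^{2}=2-(1-k^{2}u^{2})$ reduces everything to the auxiliary integral $I(k)$, and your auxiliary function $f(u)=u\sqrt{1-u^{2}}/\sqrt{1-k^{2}u^{2}}$ does the job: $f$ vanishes at both endpoints, and the relation $\int_0^1 f'=0$, after the two substitutions you indicate, yields exactly $K-(1-k^{2})I=K-E$, hence $I(k)=E(k)/(1-k^{2})$. The bookkeeping checks out.

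The paper takes a different route. It uses the decomposition $1+k^{2}u^{2}=(1-k^{2}u^{2})+2k^{2}u^{2}$, which produces $K(k)+2k\int_0^1 \tfrac{ku^{2}}{\sqrt{1-u^{2}}(1-k^{2}u^{2})^{3/2}}\,du$, recognizes the second term as $2kK'(k)$ by differentiating \eqref{eqKEalgebrique} under the integral sign, and then invokes the standard formula $K'(k)=\bigl(E(k)-(1-k^{2})K(k)\bigr)/\bigl(k(1-k^{2})\bigr)$ recorded among the basic properties. So the paper's proof is shorter but leans on that derivative identity as a black box, whereas your integration-by-parts trick with $f(u)$ is fully self-contained and in effect rederives the relevant relation among $K$, $E$, and $I$ from scratch. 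Either approach is fine here; yours is a bit more elementary, the paper's a bit more efficient given the toolbox already set up in \S\ref{sec.EK}.
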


\begin{proof}
Using Lebesgue's dominated convergence theorem, we may compute the derivative of $K$ 
by differentiating under the integral sign:
$$
  K'(k) =  \int_0^1\frac{\partial}{\partial k} \left(   \frac{1}{\sqrt{(1 - u^2)(1 - k^2 u^2)}}\right) \,du
=\int_{0}^{1}\frac{k\,u^{2}}{\sqrt{1-u^{2}}\,(1-k^{2}u^{2})^{3/2}}\,du.
$$
Using now the identity $1+k^{2}u^{2}=(1-k^{2}u^{2})+2k^{2}u^{2}$, we have 
\begin{align*}
\int_{0}^{1}\frac{1+k^{2}u^{2}}{\sqrt{1-u^{2}}\,(1-k^{2}u^{2})^{3/2}}\,du
 &= \int_{0}^{1}\frac{(1-k^{2}u^{2})+2k^{2}u^{2}}{\sqrt{1-u^{2}}\,(1-k^{2}u^{2})^{3/2}}\,du
\\[2mm]
&=\int_{0}^{1}\frac{du}{\sqrt{(1-u^{2})(1-k^{2}u^{2})}}
+2k^{2}\!\int_{0}^{1}\frac{u^{2}}{\sqrt{1-u^{2}}\,(1-k^{2}u^{2})^{3/2}}\,du \\[2mm]
&= K(k)+2k\,K'(k)  \\[2mm]
&=   K(k)+2k\Bigl(\frac{E(k)}{k(1-k^{2})}-\frac{K(k)}{k}\Bigr)  \\[2mm]
&=\frac{2E(k)}{1-k^{2}}-K(k),
\end{align*}
\end{proof}

\subsection{Representation of $T(k)$, $M(k)$, and $H(k)$ via elliptic integrals}
\label{sec.TMHelliptic}

In sections \ref{sec.discussZ} and \ref{sec.winding} we obtained the following  integral expressions for the period and the horizontal drift of a  generic geodesic \(\gamma\) with principal constants of motion $a,b$:
$$
  T(k) = 4\int_{0}^{A(k)} \frac{dz}{\sqrt{\,1-2|ab|\,\cosh(2z)\,}},
$$
and
$$
  H(k) := \sqrt{|ab|}\,M(k), \quad \mathrm{with} \quad M(k) = 4\int_{0}^{A(k)} \frac{\cosh(2z)\,dz}{\sqrt{\,1-2|ab|\,\cosh(2z)\,}},
$$
where $A(k) = \arctanh(k)$ is the amplitude.
These integrals are tricky to deal with directly because the integration interval depends on $k$ and shrinks to a point as $k\to 0$. In particular, the limit $\lim\limits_{k\to 0^{+}}T(k)$ cannot be  read off from the definition without first rescaling the variable. To solve this problem, we first rewrite these integrals in terms of elliptic integrals:

\medskip

\begin{proposition}\label{prop.TMH-elliptic}
With $K,E$ the complete elliptic integrals of the first and second kind, we have 
\begin{equation}\label{eq:T-and-M}
  T(k)=\sqrt{8(1+k^{2})}\,K(k),  \qquad
  M(k)=\frac{\sqrt{8(1+k^{2})}}{1-k^{2}}\Bigl(2E(k)-(1-k^{2})K(k)\Bigr),
\end{equation}
and
\begin{equation}\label{eq:H-formula}
   H(k)=\frac{4E(k)-2(1-k^{2})K(k)}{\sqrt{1-k^{2}}}.
\end{equation}
\end{proposition}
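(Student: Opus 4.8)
The plan is to reduce each of $T(k)$ and $M(k)$ (and then $H(k)=\sqrt{|ab|}\,M(k)$) to the Legendre complete integrals $K$ and $E$ by an explicit chain of changes of variable, using throughout the relations $2|ab|=\tfrac{1-k^{2}}{1+k^{2}}$ and $|ab|=\tfrac{1-k^{2}}{2(1+k^{2})}$ from \eqref{eq:hkab-summary}, together with the amplitude $A(k)=\arctanh(k)$.

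First I would substitute $s=\sinh z$ in the integral formulas for $T(k)$ and $M(k)$ recalled at the start of this section, so that $dz=ds/\sqrt{1+s^{2}}$, $\cosh(2z)=1+2s^{2}$, and the upper limit becomes $\beta:=\sinh A(k)=k/\sqrt{1-k^{2}}$. A short computation using $2|ab|=\tfrac{1-k^{2}}{1+k^{2}}$ gives
\begin{equation*}
  1-2|ab|\cosh(2z)=\frac{2k^{2}}{1+k^{2}}-\frac{2(1-k^{2})}{1+k^{2}}\,s^{2}=\frac{2(1-k^{2})}{1+k^{2}}\,\bigl(\beta^{2}-s^{2}\bigr),
\end{equation*}
which vanishes exactly at the endpoint $s=\beta$, so that
\begin{equation*}
  T(k)=4\sqrt{\frac{1+k^{2}}{2(1-k^{2})}}\int_{0}^{\beta}\frac{ds}{\sqrt{(\beta^{2}-s^{2})(1+s^{2})}},\qquad
  M(k)=4\sqrt{\frac{1+k^{2}}{2(1-k^{2})}}\int_{0}^{\beta}\frac{(1+2s^{2})\,ds}{\sqrt{(\beta^{2}-s^{2})(1+s^{2})}}.
\end{equation*}
Next I would set $s=\beta\sin\phi$; this cancels the factor $\sqrt{\beta^{2}-s^{2}}$ (so the endpoint singularity is harmless), turns $1+s^{2}$ into $1+\beta^{2}\sin^{2}\phi=\tfrac{1-k^{2}\cos^{2}\phi}{1-k^{2}}$, and reduces both integrals to integrals over $[0,\tfrac{\pi}{2}]$; a final reflection $\phi\mapsto\tfrac{\pi}{2}-\phi$ brings them to the Legendre forms \eqref{eqKELegendre}.

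Carrying this out for $T(k)$ gives $\int_{0}^{\beta}(\beta^{2}-s^{2})^{-1/2}(1+s^{2})^{-1/2}\,ds=\sqrt{1-k^{2}}\,K(k)$, hence $T(k)=4\sqrt{\tfrac{1+k^{2}}{2}}\,K(k)=\sqrt{8(1+k^{2})}\,K(k)$. For $M(k)$ the key algebraic identity is
\begin{equation*}
  1+2\beta^{2}\sin^{2}\phi=\frac{1-k^{2}\cos 2\phi}{1-k^{2}}=\frac{2\,(1-k^{2}\cos^{2}\phi)-(1-k^{2})}{1-k^{2}},
\end{equation*}
which splits the $M$-integrand into a multiple of $\sqrt{1-k^{2}\cos^{2}\phi}$ plus a multiple of $(1-k^{2}\cos^{2}\phi)^{-1/2}$; after the reflection these integrate to $E(k)$ and $K(k)$ respectively, giving $\int_{0}^{\beta}(1+2s^{2})(\beta^{2}-s^{2})^{-1/2}(1+s^{2})^{-1/2}\,ds=\bigl(2E(k)-(1-k^{2})K(k)\bigr)/\sqrt{1-k^{2}}$ and hence the stated expression for $M(k)$. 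Finally $H(k)=\sqrt{|ab|}\,M(k)$ follows by multiplying by $\sqrt{|ab|}=\bigl((1-k^{2})/(2(1+k^{2}))\bigr)^{1/2}$ and simplifying $\sqrt{8(1+k^{2})}\cdot\bigl((1-k^{2})/(2(1+k^{2}))\bigr)^{1/2}=2\sqrt{1-k^{2}}$.

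There is no analytic subtlety here: every integrand is positive, and the single endpoint singularity is integrable and disappears after the substitution $s=\beta\sin\phi$. The only point that needs care is the bookkeeping — one must propagate the constants $\tfrac{2}{1+k^{2}}$, $\tfrac{1}{1-k^{2}}$ and $\beta=k/\sqrt{1-k^{2}}$ consistently through the three substitutions — together with spotting the decomposition $1-k^{2}\cos 2\phi=2(1-k^{2}\cos^{2}\phi)-(1-k^{2})$, which is exactly what produces the combination $2E-(1-k^{2})K$ appearing in $M(k)$ and $H(k)$. (Alternatively, one can reach the same combination by the algebraic substitutions $s=\beta u$ then $u=\sqrt{1-v^{2}}$ and the forms \eqref{eqKEalgebrique}, using $1+k^{2}-2k^{2}v^{2}=2(1-k^{2}v^{2})-(1-k^{2})$.)
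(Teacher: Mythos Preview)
Your proof is correct, and it takes a genuinely different route from the paper's.

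The paper uses a single substitution $z=\tfrac12\log\tfrac{1+ku}{1-ku}$ (equivalently $\tanh z=ku$), which lands directly on the algebraic Legendre forms \eqref{eqKEalgebrique}. For $T(k)$ this gives $K(k)$ immediately. For $M(k)$ the resulting integral is $\int_0^1 (1+k^2u^2)(1-u^2)^{-1/2}(1-k^2u^2)^{-3/2}\,du$, and the paper evaluates this via an auxiliary lemma (Lemma~\ref{lem.K-aux}) that writes $1+k^2u^2=(1-k^2u^2)+2k^2u^2$ and invokes the derivative formula for $K'(k)$ --- that is, it passes through differentiation under the integral sign and the known relation $K'=\tfrac{E-(1-k^2)K}{k(1-k^2)}$.

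Your chain $s=\sinh z$ then $s=\beta\sin\phi$ instead reaches the trigonometric forms \eqref{eqKELegendre}, and your decomposition $1-k^2\cos 2\phi=2(1-k^2\cos^2\phi)-(1-k^2)$ produces $2E-(1-k^2)K$ by pure algebra, with no appeal to $K'(k)$ or to any auxiliary lemma. This is more self-contained and slightly more elementary; the price is one extra substitution and a bit more constant-tracking. The paper's approach is more compact for $T(k)$ but outsources the real work for $M(k)$ to Lemma~\ref{lem.K-aux}.
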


\begin{proof}  
For the computation of these integrals, we  use  the substitution:
$$
z = \frac{1}{2} \log\left( \frac{1 + ku}{1 - ku} \right), \qquad  dz = \dfrac{k\, du }{1 - k^2 u^2},
$$
so that $u$ ranges from $0$ to $1$ as $z$ ranges from $0$ to $A(k)$. We then  have the relations: 
\begin{equation}\label{relzu}
 \ee^{2z} = \frac{1 + ku}{1 - ku}, \qquad  \ee^{-2z} = \frac{1 - ku}{1 + ku}, 
\end{equation}
and
\begin{equation}\label{rel2zu}
\cosh(2z) = \frac{1 + k^2 u^2}{1 - k^2 u^2}, \qquad 
\left( 1-2|ab|\,\cosh(2z)\right)  =\frac{2k^{2}}{1+k^{2}}\cdot\frac{1-u^{2}}{1-k^{2}u^{2}}.
\end{equation}
We therefore have 
$$
  T(k) = 4 \int_0^{A(k)} \frac{dz}{\sqrt{1 - 2 |ab| \cosh(2z)}} 
  = 4k \int_0^1 \frac{k du}{(1 - k^2 u^2) \, \sqrt{\dfrac{2k^{2}}{1+k^{2}}\cdot\dfrac{1-u^{2}}{1-k^{2}u^{2}}} } ,
$$
and with  \eqref{eqKEalgebrique} we recognize 
$$
  T=\sqrt{8(1+k^{2})}\int_0^1\frac{du}{\sqrt{(1-u^{2})(1-k^{2}u^{2})}}
  =\sqrt{8(1+k^{2})}\,K.
$$
A similar computation gives us 
$$
 M(k) = 4\int_{0}^{A(k)} \frac{\cosh(2z)\,dz}{\sqrt{\,1-2|ab|\,\cosh(2z)\,}} 
 =\sqrt{8(1+k^{2})}\int_0^1\frac{1+k^{2}u^{2}}
{\sqrt{1-u^{2}}\,(1-k^{2}u^{2})^{3/2}}\,du,
$$
and applying Lemma \ref{lem.K-aux}, we  obtain 
$$
\int_0^1\frac{1+k^{2}u^{2}}{\sqrt{1-u^{2}}\,(1-k^{2}u^{2})^{3/2}}\,du
=\frac{2E}{1-k^{2}}-K.
$$
The result for $H(k)$ follows directly from $H=\sqrt{|ab|}\,M$ and $|ab|=\frac{1-k^{2}}{2(1+k^{2})}$.
\end{proof}

\medskip 

Computations give us now:
\begin{corollary}\label{cor.DerTMK}
The derivatives of the functions $T(k)$, $H(k)$, and $M(k)$ are:
\begin{align*}
T'(k) &= \sqrt{\frac{8}{1+k^{2}}}\,\frac{(1+k^{2})E(k)-(1-k^{2})K(k)}{k(1-k^{2})},\\[0.8em]
H'(k) &= \frac{2\bigl((1+k^{2})E(k)-(1-k^{2})K(k)\bigr)}{k(1-k^{2})^{3/2}},\\[0.8em]
M'(k) &= \sqrt{8(1+k^{2})}\left[
\frac{E(k)-(1-k^{2})K(k)}{k(1-k^{2})}
+\frac{k(3+k^{2})\bigl(2E(k)-(1-k^{2})K(k)\bigr)}{(1+k^{2})(1-k^{2})^{2}}
\right].
\end{align*}
The last derivative can be conveniently written as
\[
M'(k)=\frac{M(k)-T(k)}{2k}+\frac{k(3+k^{2})}{(1-k^{2})(1+k^{2})}\,M(k).
\]
\end{corollary}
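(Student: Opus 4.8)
The plan is to obtain all three derivatives by straightforward differentiation of the closed forms in Proposition~\ref{prop.TMH-elliptic}, using only the classical derivative formulas for the Legendre integrals recorded in Section~\ref{sec.EK}, namely
\[
  K'(k)=\frac{E(k)-(1-k^{2})K(k)}{k(1-k^{2})},\qquad E'(k)=\frac{E(k)-K(k)}{k}.
\]
No new idea is needed; the content is bookkeeping, so the point is to organize the algebra so that the cancellations are visible.

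First, for $T(k)=\sqrt{8(1+k^{2})}\,K(k)$ I would apply the product rule, substitute $K'(k)$, and put everything over the common denominator $k(1-k^{2})(1+k^{2})$. The numerator becomes $k^{2}(1-k^{2})K+(1+k^{2})\bigl(E-(1-k^{2})K\bigr)$; the quartic terms cancel, leaving $(1+k^{2})E-(1-k^{2})K$, and dividing one power of $(1+k^{2})$ into the square root yields the stated formula. For $H(k)=\bigl(4E-2(1-k^{2})K\bigr)(1-k^{2})^{-1/2}$ I would differentiate the two factors separately; after inserting $E'$ and $K'$, the derivative of $4E-2(1-k^{2})K$ simplifies to $\bigl(2E-2(1-k^{2})K\bigr)/k$, and collecting everything over $k(1-k^{2})^{3/2}$ the $E$-coefficients add to $2(1+k^{2})$ and the $K$-coefficients to $-2(1-k^{2})$, giving $H'(k)$.

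For $M(k)=\sqrt{8(1+k^{2})}\cdot(1-k^{2})^{-1}\cdot\bigl(2E-(1-k^{2})K\bigr)$ I would treat it as a product of three factors. The key intermediate identity is
\[
  \frac{d}{dk}\bigl(2E-(1-k^{2})K\bigr)=\frac{E-(1-k^{2})K}{k},
\]
again obtained from $E'$ and $K'$. Logarithmic differentiation of the first two factors contributes $M(k)\bigl(\tfrac{k}{1+k^{2}}+\tfrac{2k}{1-k^{2}}\bigr)=M(k)\,k(3+k^{2})/\bigl((1-k^{2})(1+k^{2})\bigr)$, which is exactly the second term of the ``conveniently written'' form. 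The remaining piece is $\sqrt{8(1+k^{2})}\,\bigl(E-(1-k^{2})K\bigr)/\bigl(k(1-k^{2})\bigr)$; a one-line manipulation using $M-T=\sqrt{8(1+k^{2})}\,\bigl(2E-2(1-k^{2})K\bigr)/(1-k^{2})$ identifies it with $\bigl(M(k)-T(k)\bigr)/(2k)$, yielding the compact form; expanding $M(k)$ back into $E$ and $K$ then produces the explicit longer expression in the statement.

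The only thing to watch is the algebra in the $H'$ and $M'$ steps — tracking the $E$- and $K$-coefficients through the common-denominator reductions and checking that the $k^{3}$ and $k^{4}$ terms cancel so that everything collapses to the single combination $(1+k^{2})E-(1-k^{2})K$ (for $T'$ and $H'$) or $E-(1-k^{2})K$ (for the derivative of $2E-(1-k^{2})K$). There is no genuine obstacle here; it is purely a matter of careful organization.
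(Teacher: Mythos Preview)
Your plan is correct and is exactly what the paper does: it simply states ``Computations give us now'' and records the formulas, relying implicitly on the product rule applied to the expressions in Proposition~\ref{prop.TMH-elliptic} together with the standard derivatives of $K$ and $E$ from Section~\ref{sec.EK}. Your intermediate identities $\tfrac{d}{dk}\bigl(4E-2(1-k^{2})K\bigr)=(2E-2(1-k^{2})K)/k$ and $\tfrac{d}{dk}\bigl(2E-(1-k^{2})K\bigr)=(E-(1-k^{2})K)/k$, as well as the logarithmic-differentiation shortcut for $M'$, are all sound and make the bookkeeping transparent.
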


\subsection{Basic properties of the functions  $T(k),M(k),$ and $H(k)$.}
\label{sec.TMHproperties}

We now derive some  fundamental properties of the functions $T,M,H$ from their representation in terms of elliptic functions.

\begin{proposition}
We have the following limiting values for $k\to 0$:
\begin{eqnarray*}
 \lim_{k\to 0} T(k) & \! = \! &  \lim_{k\to 0} M(k) = \sqrt{2} \, \pi, \qquad  \lim_{k\to 0} H(k) = \pi. 
\end{eqnarray*}
\end{proposition}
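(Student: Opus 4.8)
The plan is to read all three limits directly off the elliptic-integral representations established in Proposition~\ref{prop.TMH-elliptic}, using only the elementary facts $K(0)=E(0)=\tfrac{\pi}{2}$ recorded in the Basic Properties of Section~\ref{sec.EK}. The advantage of this approach is that, after the change of variables in Proposition~\ref{prop.TMH-elliptic}, the integration interval no longer depends on $k$, so the passage to the limit $k\to 0$ is just continuity of $K$ and $E$ at $0$ (these functions are continuous on $[0,1)$, being given by integrals of integrands that depend continuously on $k$ and are dominated uniformly for $k$ in a compact subinterval).

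\textbf{Key steps.} First, for $T(k)$: from $T(k)=\sqrt{8(1+k^{2})}\,K(k)$ we get $\lim_{k\to 0}T(k)=\sqrt{8}\cdot\tfrac{\pi}{2}=\sqrt{2}\,\pi$. Second, for $M(k)$: from
\[
  M(k)=\frac{\sqrt{8(1+k^{2})}}{1-k^{2}}\bigl(2E(k)-(1-k^{2})K(k)\bigr),
\]
the prefactor $\sqrt{8(1+k^{2})}/(1-k^{2})\to\sqrt{8}$ and the bracket tends to $2E(0)-K(0)=\pi-\tfrac{\pi}{2}=\tfrac{\pi}{2}$, so $\lim_{k\to 0}M(k)=\sqrt{8}\cdot\tfrac{\pi}{2}=\sqrt{2}\,\pi$, which matches $\lim_{k\to 0}T(k)$ as claimed. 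Third, for $H(k)$: from $H(k)=\bigl(4E(k)-2(1-k^{2})K(k)\bigr)/\sqrt{1-k^{2}}$, the numerator tends to $4E(0)-2K(0)=2\pi-\pi=\pi$ and the denominator tends to $1$, giving $\lim_{k\to 0}H(k)=\pi$. (As a consistency check, $H(k)=\sqrt{|ab|}\,M(k)$ with $|ab|=\tfrac{1-k^2}{2(1+k^2)}\to\tfrac12$, so $H(0)=\tfrac{1}{\sqrt2}\cdot\sqrt{2}\,\pi=\pi$, in agreement.)

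\textbf{Main obstacle.} There is essentially no obstacle here: the only point requiring a word of justification is the continuity of $K$ and $E$ at $k=0$, which is standard and already implicit in item (iii) of the Basic Properties of Section~\ref{sec.EK}. The conceptual work — namely rewriting $T$, $M$, $H$ as elliptic integrals with a fixed integration interval — was already done in Proposition~\ref{prop.TMH-elliptic}; without that reformulation one would have to rescale the shrinking interval $[0,A(k)]$ by hand, which is exactly the difficulty the Proposition was designed to circumvent.
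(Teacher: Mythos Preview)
Your proof is correct and follows exactly the paper's own approach: both use the elliptic-integral formulas from Proposition~\ref{prop.TMH-elliptic} together with $K(0)=E(0)=\tfrac{\pi}{2}$ to read off the limits by continuity. Your exposition is in fact somewhat more detailed than the paper's (which simply substitutes $k=0$ into the formulas), and your consistency check via $H=\sqrt{|ab|}\,M$ is a nice addition.
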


\begin{proof}
The formulas for $T(k), M(k)$ and $H(k)$ in Proposition  \ref{prop.TMH-elliptic} extend continously at $k=0$.
Since $K(0)=E(0)=\tfrac{\pi}{2}$, we have 
\[
\lim_{k\to 0}T(k) =\sqrt{8}\,K(0)=\pi\sqrt{2},\qquad
\lim_{k\to 0}M(k) =\sqrt{8}\,[\,2E(0)-K(0)\,]=\pi\sqrt{2}
\]
and 
$$
\lim_{k\to 0} H(k)  =\frac{4E(0)-2K(0)}{\sqrt{1-0^2}}=\pi.
$$
\end{proof}

The behavior as $k\to 1$ is described in the next result:
\begin{proposition}\label{prop:limits-k1}
$$
 \lim_{k\to 1}  \sqrt{1-k^2}\,  H(k) =   \lim_{k\to 1}  \frac{1 }{2}(1-k^2) \, M(k)  
 = 
  \lim_{k\to 1} \frac{2T(k)}{|\log (1-k^2)|} = 4.
$$
In particular
$$
  \lim_{k\to 1} T(k)  =  \lim_{k\to 1} M(k) =   \lim_{k\to 1} H(k) =  \infty.
$$
\end{proposition}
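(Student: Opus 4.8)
The plan is to read off all three limits directly from the closed forms established in Proposition~\ref{prop.TMH-elliptic}, using only the elementary asymptotics of $K$ and $E$ recorded in Section~\ref{sec.EK}. The three facts I would invoke are: $\sqrt{8(1+k^{2})}\to 4$ as $k\to 1$; $E(k)\to 1$ as $k\to 1$; and the logarithmic asymptotic $K(k)=\tfrac12|\log(1-k^{2})|+O(1)$. The last of these has the convenient consequence that $(1-k^{2})K(k)\to 0$ as $k\to 1$, since $t\log t\to 0$ as $t\to 0^{+}$; it also says precisely that $K(k)/|\log(1-k^{2})|\to \tfrac12$.

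First I would handle $H(k)$. From \eqref{eq:H-formula} we have $\sqrt{1-k^{2}}\,H(k)=4E(k)-2(1-k^{2})K(k)$, and letting $k\to 1$ with $E(k)\to 1$ and $(1-k^{2})K(k)\to 0$ gives the value $4$. Next, for $M(k)$, the second identity in \eqref{eq:T-and-M} gives $\tfrac12(1-k^{2})M(k)=\tfrac12\sqrt{8(1+k^{2})}\bigl(2E(k)-(1-k^{2})K(k)\bigr)$, and the same two limits yield $\tfrac12\cdot 4\cdot(2\cdot 1-0)=4$. Finally, for $T(k)$, the first identity in \eqref{eq:T-and-M} gives $\dfrac{2T(k)}{|\log(1-k^{2})|}=2\sqrt{8(1+k^{2})}\,\dfrac{K(k)}{|\log(1-k^{2})|}$, and using $\sqrt{8(1+k^{2})}\to 4$ together with $K(k)/|\log(1-k^{2})|\to\tfrac12$ gives $2\cdot 4\cdot\tfrac12=4$. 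This establishes the three displayed limits.

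The ``in particular'' statement is then immediate: each of the factors $\sqrt{1-k^{2}}$, $(1-k^{2})$, and $1/|\log(1-k^{2})|$ tends to $0$ as $k\to 1$, while the corresponding rescaled quantity tends to the nonzero constant $4$; hence $H(k)$, $M(k)$, and $T(k)$ each diverge to $+\infty$.

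There is no genuine obstacle here — the argument is a one-pass substitution into the formulas of Proposition~\ref{prop.TMH-elliptic}. The only step meriting an explicit word is the claim $(1-k^{2})K(k)\to 0$, which I would dispatch in one line from the asymptotic $K(k)=\tfrac12|\log(1-k^{2})|+O(1)$. Should one prefer not to quote that asymptotic in full, the cruder estimate $K(k)\le C\,|\log(1-k^{2})|$ near $k=1$ already forces $(1-k^{2})K(k)\to 0$ and suffices for the $H$ and $M$ limits, while the sharp constant in the $T$ limit still follows from $T(k)=\sqrt{8(1+k^{2})}\,K(k)$ combined with the stated refined asymptotic for $K$.
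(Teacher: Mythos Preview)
Your proposal is correct and follows essentially the same route as the paper's proof: both substitute the closed forms from Proposition~\ref{prop.TMH-elliptic} and invoke $E(k)\to 1$, $(1-k^{2})K(k)\to 0$, and the asymptotic $K(k)=\tfrac12|\log(1-k^{2})|+O(1)$ to read off the three limits. Your handling of the ``in particular'' clause is also the same.
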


\medskip 

\begin{proof}
We use 
$$
  K(k) =  \frac{1}{2} |\log(1-k^{2})| + O(1)  \quad \text{as } k\to 1.
$$
This implies  $\lim\limits_{k\to 1} (1-k^2) K(k) = 0$ and since  $E(1) = 1$ we have
$$
  \lim_{k\to 1} \sqrt{1-k^{2}}\,H(k) =   \lim_{k\to 1} \ 4E(k)-2(1-k^{2})K(k)  = 4.
$$
We also have 
$$
   \lim_{k\to 1} \left( {1-k^{2}}\right) \,M(k)  =   \lim_{k\to 1} \  {\sqrt{8(1+k^{2})}} \, \bigl(2E(k)-(1-k^{2})K(k)\bigr)= 8,
$$
and
$$
 \frac{2\,T(k)}{|\log(1-k^{2})|} =\frac{2\sqrt{8(1+k^{2})}\,K(k)}{|\log(1-k^{2})|} \rightarrow\ 4.
$$
\end{proof}

Using the previous two results we can write
$$
  T(0) = M(0) = \sqrt{2} \pi,  \quad H(0) = \pi,  \quad 
  T(1) =  M(1) =  H(1) =  \infty.
$$

\begin{proposition}(Monotonicity)
For $k\in(0,1)$ the functions $T(k),M(k),H(K)$ are real-analytic and strictly increasing. 
\end{proposition}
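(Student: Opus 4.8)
The plan is to read both properties off the closed forms of Proposition~\ref{prop.TMH-elliptic} together with the derivative formulas of Corollary~\ref{cor.DerTMK}; the only genuine input is the inequality $E(k)>(1-k^{2})K(k)$ of Lemma~\ref{lem.EkK}.

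\emph{Real-analyticity.} First I would note that $K$ and $E$ are real-analytic on $(0,1)$ --- this follows, for instance, by differentiating under the integral sign in \eqref{eqKEalgebrique}, whose integrands are real-analytic in $k$ locally uniformly on $(0,1)$, or from their hypergeometric series in $k^{2}$. Since the prefactors $\sqrt{8(1+k^{2})}$, $(1-k^{2})^{-1}$ and $(1-k^{2})^{-1/2}$ appearing in \eqref{eq:T-and-M}--\eqref{eq:H-formula} are real-analytic and nonvanishing on $(0,1)$, the functions $T$, $M$, $H$ are real-analytic there, being products and compositions of real-analytic functions.

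\emph{Monotonicity of $T$ and $H$.} By Corollary~\ref{cor.DerTMK}, for $k\in(0,1)$ the derivatives $T'(k)$ and $H'(k)$ are both positive multiples of
\[
  \Phi(k):=(1+k^{2})E(k)-(1-k^{2})K(k).
\]
I would then write $\Phi(k)=\bigl(E(k)-(1-k^{2})K(k)\bigr)+k^{2}E(k)$: the first bracket is positive by Lemma~\ref{lem.EkK} and the second is obviously positive, so $\Phi(k)>0$, and hence $T'(k)>0$ and $H'(k)>0$ on $(0,1)$.

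\emph{Monotonicity of $M$, and the main point.} For $M$ I would use the form $M'(k)=\dfrac{M(k)-T(k)}{2k}+\dfrac{k(3+k^{2})}{(1-k^{2})(1+k^{2})}\,M(k)$ from Corollary~\ref{cor.DerTMK}: the second summand is positive since $M(k)>0$, and a one-line computation from \eqref{eq:T-and-M} gives
\[
  M(k)-T(k)=\frac{2\sqrt{8(1+k^{2})}}{1-k^{2}}\bigl(E(k)-(1-k^{2})K(k)\bigr)>0,
\]
again by Lemma~\ref{lem.EkK}, so the first summand is positive too and $M'(k)>0$ on $(0,1)$. I do not anticipate any real obstacle here: everything reduces to sign bookkeeping once Lemma~\ref{lem.EkK} is in hand. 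The only steps deserving a word of care are the (classical) real-analyticity of $K$ and $E$, and recording the identity for $M-T$ above explicitly, since it is the crux of the $M$ case (and incidentally reproves $M(k)>T(k)$).
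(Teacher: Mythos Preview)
Your proof is correct and follows essentially the same route as the paper: real-analyticity from the elliptic-integral formulas, positivity of $T'$ and $H'$ via Lemma~\ref{lem.EkK}, and positivity of $M'$ via the identity in Corollary~\ref{cor.DerTMK} together with $M(k)>T(k)$. The only cosmetic differences are that the paper handles $T$ by observing it is a product of two increasing functions rather than via $T'$, and invokes $M\ge T$ directly (which is immediate from the integral definitions since $\cosh(2z)\ge 1$) whereas you compute $M-T$ explicitly from~\eqref{eq:T-and-M}.
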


\begin{proof}
Real analiticity is clear.  The derivatives have been given in Corollary \ref{cor.DerTMK}, we have 

\begin{enumerate}[1)]
\item The function  $k\mapsto T(k)=\sqrt{8(1+k^{2})}\,K(k)$ \.   is clearly strictly increasing, since it is the product of two strictly increasing functions.
\item \[
H'(k)=\frac{2}{k(1-k^{2})^{3/2}}\Bigl((1+k^{2})E-(1-k^{2})K\Bigr)>0,
\]
since  $E(k)>(1-k^{2})K(k)$ by Lemma \ref{lem.EkK}.
\item 
 And because $M\ge T$, we have 
\[
 M'(k)=\frac{M(k)-T(k)}{2k}+\frac{k(3+k^{2})}{(1-k^{2})(1+k^{2})}\,M(k)>0.
\]
\end{enumerate}
\end{proof}

Combining the previous results, we obtain: 
\begin{corollary}
The functions  $k\mapsto T(k)$  and  $k\mapsto M(k)$  define real analytic diffeomorphism from  $[0,1)$ onto $[\sqrt{2} \pi ,\infty)$, and   $k\mapsto H(k)$  define real analytic diffeomorphism from  $[0,1)$ onto $[\pi ,\infty)$.
\end{corollary}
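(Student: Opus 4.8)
The plan is to read off the claim from the three facts already established---real-analyticity, strict monotonicity, and the boundary limits---together with the one-variable intermediate value theorem and the real-analytic inverse function theorem.

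First I would record continuity and analyticity: by the elliptic-integral formulas of Proposition~\ref{prop.TMH-elliptic} each of $T$, $M$, $H$ is a combination of $K(k)$, $E(k)$, and powers of $\sqrt{1\pm k^{2}}$, all real-analytic on $(-1,1)$, while the denominators $1-k^{2}$ occurring in $M$ and $H$ do not vanish on $[0,1)$; hence $T,M,H$ are continuous on $[0,1)$ and real-analytic on $(0,1)$.

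Next, strict monotonicity (the Monotonicity proposition) makes each of $T$, $M$, $H$ injective on $[0,1)$. Combining this with the intermediate value theorem and the endpoint behaviour---$T(0)=M(0)=\sqrt2\,\pi$, $H(0)=\pi$ from the preceding proposition, and $\lim_{k\to1}T(k)=\lim_{k\to1}M(k)=\lim_{k\to1}H(k)=\infty$ from Proposition~\ref{prop:limits-k1}---shows that $T$ and $M$ are continuous strictly increasing bijections $[0,1)\to[\sqrt2\,\pi,\infty)$, and $H$ is a continuous strictly increasing bijection $[0,1)\to[\pi,\infty)$; in particular the inverse functions exist and are continuous on the respective target intervals.

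Finally, for the smoothness of the inverses I would invoke the real-analytic inverse function theorem: by Corollary~\ref{cor.DerTMK} (or directly by the Monotonicity proposition) one has $T'(k),M'(k),H'(k)>0$ for every $k\in(0,1)$, so each inverse is real-analytic on $(\sqrt2\,\pi,\infty)$, respectively on $(\pi,\infty)$. There is no real obstacle here---the corollary is essentially bookkeeping---save for one point deserving a remark: since $K$ and $E$ are functions of $k^{2}$, the maps $T$, $M$, $H$ are even in $k$ and their derivatives vanish at $k=0$, so their inverses are real-analytic only on the open intervals and merely continuous at the left endpoints $\sqrt2\,\pi$ and $\pi$; with this understood, the ``real-analytic diffeomorphism'' assertion is exactly what the above yields.
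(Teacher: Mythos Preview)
Your proposal is correct and follows exactly the paper's approach: the paper's proof consists of the single line ``Combining the previous results, we obtain'', i.e.\ monotonicity plus the endpoint limits, which is precisely what you spell out. Your final remark is in fact a genuine refinement the paper glosses over: since $T,M,H$ are even functions of $k$ (as $K,E$ depend only on $k^{2}$), one has $T'(0)=M'(0)=H'(0)=0$, so the inverses fail to be differentiable at the left endpoints $\sqrt{2}\,\pi$ and $\pi$; the corollary as stated in the paper is therefore slightly imprecise at $k=0$, and your caveat is the correct reading.
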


Finally we have the following inequalities:
\begin{proposition}
For $0<k<1$ one has
\[
  M(k)\;>\;\sqrt{2}\,H(k)\;>\;T(k)\;>\;4\,A(k).
\]
Moreover, $M(0)=\sqrt{2}\,H(0)=T(0)=\sqrt{2}\,\pi$ and $A(0)=0$.
\end{proposition}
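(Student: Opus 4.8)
The plan is to treat the three inequalities separately, obtaining the equalities at $k=0$ directly from the limiting values $T(0)=M(0)=\sqrt2\,\pi$ and $H(0)=\pi$ computed above, together with $A(0)=\arctanh(0)=0$.

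\smallskip

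\emph{The bound $M(k)>\sqrt2\,H(k)$.} This one is essentially formal. By Corollary~\ref{HDI} we have $H(k)=\sqrt{|ab|}\,M(k)$, hence $\sqrt2\,H(k)=\sqrt{2|ab|}\,M(k)$; since $2|ab|=\tfrac{1-k^{2}}{1+k^{2}}\in(0,1)$ and $M(k)>0$ for $0<k<1$, the claim follows at once. (Equivalently, in the integral representation one compares the integrands $\sqrt{2|ab|}\cosh(2z)<\cosh(2z)$ pointwise.)

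\smallskip

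\emph{The bound $T(k)>4A(k)$.} Here I would use the integral formula $T(k)=4\int_{0}^{A(k)}\tfrac{dz}{\sqrt{1-2|ab|\cosh(2z)}}$ from \eqref{PeriodT0}. On $[0,A(k))$ one has $0<2|ab|\cosh(2z)<1$ (recall $\cosh(2A(k))=1/(2|ab|)$), so the integrand is strictly $>1$ there; integrating this strict inequality over a segment of length $A(k)$ — the improper integral converging since $T(k)$ is finite — gives $T(k)>4A(k)$.

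\smallskip

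\emph{The bound $\sqrt2\,H(k)>T(k)$.} This is the substantive step. Substituting the elliptic-integral expressions of Proposition~\ref{prop.TMH-elliptic} and clearing the factors $\sqrt{1\pm k^{2}}$, the inequality is equivalent to
\[
  2E(k)\;>\;\bigl(1-k^{2}+\sqrt{1-k^{4}}\bigr)\,K(k).
\]
I would prove this by monotonicity: put $f(k):=2E(k)-\bigl(1-k^{2}+\sqrt{1-k^{4}}\bigr)K(k)$, note $f(0)=0$, and show $f'(k)>0$ on $(0,1)$. Differentiating with the formulas for $K'$ and $E'$ from \S\ref{sec.EK} and reorganizing — writing $m=1-k^{2}$, $s=\sqrt{1-k^{4}}$ and using the identities $s-m=\tfrac{2k^{2}m}{s+m}$ and $k^{2}+m=1$ — one finds that $k(1-k^{2})f'(k)$ is a positive multiple of
\[
  1+\frac{k^{2}(1-k^{2})}{\sqrt{1-k^{4}}}-\frac{E(k)}{K(k)} .
\]
Since $E(k)<\tfrac{\pi}{2}<K(k)$ gives $E/K<1$, this quantity is positive, whence $f'>0$ and so $f>0$ on $(0,1)$. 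The main obstacle is exactly this simplification of $f'$: it is elementary calculus, but it takes some care to collapse the cluster of terms coming from the product rule down to the transparent comparison $E/K<1$.
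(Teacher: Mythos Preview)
Your proof is correct. The first and third inequalities are handled exactly as in the paper, and your algebra for the middle one checks out: writing $m=1-k^{2}$, $s=\sqrt{1-k^{4}}$ one indeed finds
\[
  k(1-k^{2})\,f'(k)=(s-m)\,K(k)\Bigl(1+\tfrac{k^{2}m}{s}-\tfrac{E(k)}{K(k)}\Bigr),
\]
and $s>m$ together with $E<K$ finishes it.

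For the inequality $\sqrt{2}\,H(k)>T(k)$ the paper takes a shorter path, which you might prefer. Instead of converting to an inequality between $E$ and $K$ and differentiating from scratch, it uses the derivative formulas already recorded in Corollary~\ref{cor.DerTMK}: both $T'(k)$ and $H'(k)$ contain the common factor $(1+k^{2})E(k)-(1-k^{2})K(k)$, so their ratio collapses to
\[
  \frac{H'(k)}{T'(k)}=\sqrt{\frac{1+k^{2}}{2(1-k^{2})}}>\frac{1}{\sqrt{2}},
\]
whence $(\sqrt{2}\,H-T)'=T'\bigl(\sqrt{2}\,H'/T'-1\bigr)>0$ (using $T'>0$ from the monotonicity proposition). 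This avoids the careful simplification you flagged as the main obstacle; your route gets there too, but the common-factor observation makes the computation essentially one line.
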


\begin{proof}
Since $H(k)=\sqrt{|ab|}\,M(k)$ and $2|ab|\in(0,1)$, we have $M(k)>\sqrt{2}\,H(k)$ for $0<k<1$ (with equality only at $k=0$).
Let us define  $f(k):=\sqrt{2}\,H(k)-T(k)$, then the inequality 
$$
  \frac{H'(k)}{T'(k)}=\sqrt{\frac{1+k^{2}}{2(1-k^{2})}}>\frac{1}{\sqrt{2}}
$$
implies 
$$
 f'(k)=T'(k)\!\left(\sqrt{2}\,\frac{H'}{T'}-1\right)>0
$$
for any $k\in (0,1)$. Since \(f(0)=0\), it follows that \(\sqrt{2}\,H(k)>T(k)\) for \(k>0\).
Finally, for \(z\in[0,A(k)]\), we have \(1-2|ab|\cosh(2z)\le1\), therefore
\[
T(k)=4\!\int_{0}^{A(k)}\frac{dz}{\sqrt{1-2|ab|\cosh(2z)}}>4A(k).
\]
Combining the three inequalities proves the Proposition. 
\end{proof}

{\small 
\begin{remark}\rm
Using Gauss’s relation between the complete elliptic integral and the arithmetic–geometric mean\index{arithmetic–geometric mean} (AGM),\footnote{For $x,y>0$, the arithmetic–geometric mean $\mathrm{AGM}(x,y)$ is the common limit of the iteration $(x,y)\mapsto(\sqrt{xy},\tfrac12(x+y))$.}  
which states that
\[
  K(k)=\frac{\pi}{2\,\mathrm{AGM}\!\bigl(1,\sqrt{1-k^{2}}\bigr)}
\]
(see \cite[Eq.~(3.9.39), p.~81]{Lawden}), together with the relation
$
k^{2}=\frac{1-2|ab|}{1+2|ab|},
$
and the homogeneity of the  AGM, we obtain after a short computation 
\[
  T(k)=\sqrt{8(1+k^{2})}\,K(k)     
       = \frac{\pi}{\mathrm{AGM}\!\bigl(\sqrt{|ab|},\,\tfrac12\sqrt{1+2|ab|}\bigr)}.
\]
This representation of the period is the viewpoint adopted in \cite{CoiculescuSchwartz}.
\end{remark}
}

\section{Geodesics in the Heisenberg ($\NIL$) geometry}
\label{sec.nil}

In this  final section we solve the geodesic equations for the standard left–invariant Riemannian metric on the three–dimensional Heisenberg group, 
the so called $\NIL$\index{NIL} geometry in Thurston's classification. Although these geodesics are well known (see, e.g., Exercise~2.90 bis, p.~88, and its solution on pp.~280–290 in \cite{GHL}), we include here a brief self–contained resolution by quadrature as we did in the case of $\SOL$. Specifically we exploit the special form of the metric tensor to identify two basic constants of motions, which then allow a successive integration of the geodesic equations. 

\medskip 

Recall that the Heisenberg group\index{Heisenberg group} is defined as 
\[
  \mathrm{Nil}
  = \left\{
  \begin{pmatrix}
  1 & x & z\\
  0 & 1 & y\\
  0 & 0 & 1
  \end{pmatrix}
  \,\middle|\, x,y,z\in\mathbb{R}\right\},
\]
endowed with the left–invariant metric
\[
  ds^2 = dx^2 + dy^2 + (dz - x\,dy)^2.
\]
Accordingly, geodesics are the critical curves of the action $\int \mathcal{L}\,dt$ with Lagrangian
\[
  \mathcal{L} =\frac12\bigl(\dot x^{\,2} + \dot y^{\,2} + (\dot z - x\,\dot y)^{2}\bigr).
\]

Applying the Euler-Lagrange Equations \eqref{EulerLagrance}, we see that 
$$
 c:= \frac{\partial\mathcal{L}}{\partial \dot z}   =\dot z-x\dot y
  \qquad \text{and} \qquad 
 b :=  \frac{\partial\mathcal{L}}{\partial \dot y} = (1+x^2)\dot y-x\dot z,
$$
are constants of motion. We then obtain the following equations. 
\begin{equation*}
  \dot y=b+c\,x,\qquad   \dot z=c+x\,\dot y=c+b\,x+c\,x^2.
\end{equation*}
The remaining Euler--Lagrange equation is  the equality of 
$$
  \frac{\partial \mathcal{L}}{\partial x}
  =-(\dot z-x\dot y)\,\dot y=-c\,\dot y
 \quad \text{and} \quad 
  \frac{d}{dt}\!\left(\frac{\partial \mathcal{L}}{\partial \dot x}\right)=\ddot x,
$$
hence
\[
  \ddot x=-c\,\dot y=-c^2\,x-bc.
\]
Finally, if the geodesic has unit speed then  $\dot x^2+\dot y^2+(\dot z-x\dot y)^2=1$;  therefore
\[
  \dot x^2+\dot y^2+c^2=1.
\]
We  sum up  our results so far in the 
\begin{lemma}
A  smooth curve $\gamma(t) = (x(t),y(t), z(t))$ in $\NIL$ is a unit speed geodesic if and only if there exists two  constants $b,c$ 
such that 
\[
\boxed{
\begin{aligned}
  \dot y &= b+c\,x & \qquad    &\dot z = c+b\,x+c\,x^2\\[2pt]
  \ddot x &= -c^2\,x-bc &   & \dot x^2+\dot y^2+c^2 = 1  
\end{aligned}}
\]
\end{lemma}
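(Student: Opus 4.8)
The plan is to derive the boxed system directly from the Euler--Lagrange equations associated with the energy Lagrangian $\mathcal L = \tfrac12\bigl(\dot x^2 + \dot y^2 + (\dot z - x\dot y)^2\bigr)$, whose critical curves are exactly the (affinely parametrized) geodesics of $\NIL$. First I would note that $\mathcal L$ is independent of $y$ and of $z$, so the two conjugate momenta
\[
  c := \frac{\partial \mathcal L}{\partial \dot z} = \dot z - x\dot y,
  \qquad
  b := \frac{\partial \mathcal L}{\partial \dot y} = (1+x^2)\dot y - x\dot z
\]
are conserved along any geodesic. Solving this linear pair for $\dot y$ and $\dot z$ in terms of $x$ gives $\dot y = b + cx$ and then $\dot z = c + x\dot y = c + bx + cx^2$; this is the only step that requires a short computation, and it is purely algebraic.

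Next I would write out the third Euler--Lagrange equation, the one in the $x$-variable. Since $\partial \mathcal L/\partial \dot x = \dot x$ and $\partial \mathcal L/\partial x = -(\dot z - x\dot y)\dot y = -c\,\dot y$, it reduces to $\ddot x = -c\,\dot y$; substituting $\dot y = b + cx$ yields $\ddot x = -c^2 x - bc$. Finally, the unit-speed normalization $\dot x^2 + \dot y^2 + (\dot z - x\dot y)^2 = 1$ collapses to $\dot x^2 + \dot y^2 + c^2 = 1$ once we use $\dot z - x\dot y = c$. This proves the ``only if'' implication.

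For the converse I would simply reverse the computation: starting from a smooth curve satisfying the boxed relations for some constants $b,c$, one checks that $\dot z - x\dot y = c$ and $(1+x^2)\dot y - x\dot z = b$ are constant and that $\ddot x = -c\,\dot y = \partial\mathcal L/\partial x$, which are precisely the three Euler--Lagrange equations, so $\gamma$ is a geodesic; the remaining boxed identity then gives $\|\dot\gamma\|\equiv 1$. I do not expect any genuine obstacle here: the whole argument is the same quadrature scheme already carried out for $\SOL$, and the only point needing a little care is the elementary inversion of the linear system for $(\dot y,\dot z)$ and, in the converse direction, the verification that the recovered quantities are indeed the prescribed constants $b$ and $c$.
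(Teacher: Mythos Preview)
Your proposal is correct and follows essentially the same approach as the paper: both identify the conserved momenta $c=\dot z-x\dot y$ and $b=(1+x^2)\dot y-x\dot z$ from the cyclic coordinates $y,z$, solve for $\dot y,\dot z$, derive $\ddot x=-c\dot y$ from the remaining Euler--Lagrange equation, and impose the unit-speed condition. The only minor addition is that you spell out the converse direction explicitly, which the paper leaves implicit.
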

 This ODE system is easily integrated: for $c\neq0$ one obtains helical motions winding around vertical cylinders (with explicit  parametrization by trigonometric functions), while the degenerate case $c=0$ yields parabolas in vertical planes (or lines when $ab=0$). Let us perform these calculations.

\medskip

We first assume  $c\neq 0$. The general solution to the third equation  $\ddot x=-c^2 x-bc$ can be written as 
$$
 x(t) = A\cos\big(c t-\phi\big) - \frac{b}{c},
$$
for some amplitude $A\ge 0$ and phase $\phi$. We then obtain $y(t)$ by integrating 
$$
  \dot y = b+c x = c A\cos\big(c t-\phi\big),
$$  
obtaining
$$
  y(t) = A\sin\big(c t-\phi\big) + y_0.
$$
And from the vertical derivative
\begin{eqnarray*}
 \dot z(t)  &=& c+x(t)\dot y(t) = c + cx(t)  A\cos\big(c t-\phi\big),
 \\  & = & 
 c + \frac{cA^{2}}{2}\Bigl(1+\cos\bigl(2(ct-\phi)\bigr)\Bigr)
  - bA\cos(ct-\phi),
\end{eqnarray*}
we obtain by integration the slightly more complex expression: 
\[
  z(t)=z_0+\Big(c+\frac{cA^2}{2}\Big)\cdot t  +\frac{A^2}{4}\,\sin\!\big(2(ct-\phi)\big)
       -\frac{bA}{c}\,\sin\!\big(ct-\phi\big).
\]
We thus have established that for $c\neq 0$, the geodesic $\gamma$ is parametrized by  
\[
{\setlength{\jot}{12pt} 
\left\{
\begin{aligned}
x(t) &= x_0 + A\cos\big(ct-\phi\big),
\\
y(t) &= y_0 + A\sin\big(ct-\phi\big),\\
z(t) &= z_0+\Big(c+\tfrac{cA^2}{2}\Big)t  + \tfrac{A^2}{4}\sin\!\big(2(ct-\phi)\big) - \tfrac{bA}{c}\sin\!\big(ct-\phi\big), 
\end{aligned}
\right.
}
\]
where $x_0 = -b/c$ \  and  $y_0,\, z_0$  are arbitrary. 
We observe the following  three points:
\begin{enumerate}[(i)]
\item The coordinates $x(t)$ and $y(t)$ are periodic and $z(t)$  is the sum of a  linear and a  periodic function. 
\item For all $t$ we have
$$
  \left(x(t)+\frac{b}{c}\right)^2+\big(y(t)-y_0\big)^2=A^2,
$$
in particular the  geodesic $\gamma$  is winding around a vertical circular cylinder of radius $A$.
\item The amplitude is
$$
  A = \dfrac{\sqrt{1-c^2}}{|c|},
$$
this follows from 
$$
 1 = \dot x^2+\dot y^2+c^2 = (Ac)^2\sin\big(c t-\phi\big)^2 +  (Ac)^2\cos\big(c t-\phi\big)^2 + c^2.
$$
\end{enumerate}

\medskip

We now consider the degenerate case $c= 0$. In that case the equations in the previous Lemma  reduce to 
\[
\boxed{
\begin{aligned}
  \dot y &= b  & \qquad    &\dot z =  bx \\[2pt]
  \ddot x &= 0 &   & \dot x^2+\dot y^2 =  1
\end{aligned}}
\]
Thus $a = \dot  x$ is constant and we have  $a^2+b^2 = \dot x^2+\dot y^2 =  1$.
The  solutions to the above ODEs are 
\[
 x(t)=a t+x_0, \qquad 
  y(t)=b t+y_0,\qquad
  z(t)=\frac{ab}{2}\,t^2+b x_0\,t+z_0.
\]
If $ab \neq 0$, the  geodesic is then a parabola contained in a vertical plane 
$
     bx - ay = \mathrm{const.}
$
If $ab=0$ the parabola degenerates to a  line.

\end{document}